\newcommand{\ind}[1]{{\bf 1}_{#1}}
\newcommand{\E}[1]{\mathbb{E} \left(#1\right)}
\newcommand{\Ex}[2]{\mathbb{E}_{#1} \left(#2\right)}
\newcommand{\Var}[1]{\text{Var}(#1)}
\newcommand{\brac}[1]{\left(#1\right)}
\def\AA{\mathcal{A}}
\def\BB{\mathcal{B}}
\def\EE{\mathcal{E}}
\def\FF{\mathcal{F}}
\def\GG{\mathcal{G}}
\def\HH{\mathcal{H}}
\def\JJ{\mathcal{J}}
\def\KK{\mathcal{K}}
\def\LL{\mathcal{L}}
\def\PP{\mathcal{P}}
\def\ol{\overline}
\def\F{\Phi}
\def\d{\delta}
\def\D{\Delta}
\def\e{\varepsilon}
\def\f{\phi}
\def\G{\Gamma}
\def\th{\theta}
\def\l{\lambda}
\def\m{\mu}
\def\r{\rho}
\def\t{\tau}
\def\om{\omega}
\def\OM{\Omega}
\newcommand\Prob[1]{{\mbox{Pr}\left\{#1\right\}}}
\newcommand\beq[2]{\begin{equation}\label{#1}#2\end{equation}}
\newtheorem{lemma}{Lemma}
\newtheorem{theorem}{Theorem}
\newtheorem{corollary}{Corollary}
\newtheorem{definition}{Definition}
\newtheorem{claim}{Claim}
\newcommand{\bfrac}[2]{\left({\frac{#1}{#2}}\right)}
\newcommand{\set}[1]{\left\{#1\right\}}
\def\Tc{T_{\mbox{cov}}}
\newcounter{rot}
\title{The cover time of a biased random walk on a random cubic graph}
\author{Colin Cooper\thanks{Department of  Informatics,
King's College, University of London, London WC2R 2LS, UK.
Research supported in part by EPSRC grants EP/J006300/1 and EP/M005038/1.}
\and Alan
Frieze\thanks{Department of Mathematical Sciences, Carnegie Mellon
University, Pittsburgh PA 15213, USA.
Research supported in part by NSF grant DMS0753472.
}\and Tony Johansson\thanks{Department of Mathematics, Uppsala University, Sweden. Partly supported by the Knut and Alice Wallenberg Foundation.}
}
\begin{document}
\maketitle
\begin{abstract}
We study a random walk that prefers to use unvisited edges in the context of random cubic graphs. We establish asymptotically correct estimates for the vertex and edge covertimes, these being $\approx n\log n$ and $\approx \frac32n\log n$ respectively.
\end{abstract}
\section{Introduction}
Our aim in this paper is to analyse a variation on a simple random walk that may tend to speed up the cover time of a connected graph. This variation is just one of several possible approaches which include  (i) non-bactracking walks, see Alon, Benjamini, Lubetzky and Sodin \cite{Alon},  (ii) walks that prefer unused edges, see Berenbrink, Cooper and Friedetzky \cite{bcf} or (iii) walks that a biassed toward low degree vertices, see Cooper, Frieze and Petti \cite{CFP} or any number of other ideas. In this paper we study idea (ii) in the context of random cubic graphs, partially solving a problem left from \cite{bcf}.

\subsection{Unvisited Edge Process}
The papers \cite{bcf}, \cite{OS} describe a modified random  walk $X=(X(t),\; t \ge 0)$ on a graph $G$, which uses unvisited edges when available at the currently occupied vertex. If there are {\em unvisited  edges} incident with the current vertex, the walk  picks one u.a.r. and make a transition along this edge. If there are no unvisited edges incident with the current vertex, the walk moves to a random neighbour.

In \cite{bcf} this walk was called an {\em unvisited edge process} (or  edge-process), and in \cite{OS},
a {\em greedy random walk}. For random  $d$-regular graphs where $d=2k$ ($d$ even), it was shown in \cite{bcf} that the edge-process has vertex cover time $\Theta(n)$, which is best possible up to a constant. The paper also gives an upper bound of  $O(n \om)$ for the edge cover time. The $\om$ term comes from the w.h.p. presence of small cycles (of length at most $\om$). In \cite{CFNet}, the constant for the vertex cover time was shown to be $d/2$.

\begin{theorem}\label{newedge}
Let $X$ be an unvisited edge-process on a random $d$-regular graph, $d$ even. For $d \ge 4$, the following  holds w.h.p. The vertex cover time of the edge-process is
$\Tc^V(G)\sim dn/2$.
\end{theorem}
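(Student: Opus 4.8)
The plan is to exploit the fact that, since $d$ is even, say $d=2k$, the graph $G$ is Eulerian, so the unvisited--edge process is essentially a randomised greedy attempt at building an Eulerian circuit: whenever the walk sits at a vertex $v$ it leaves along a fresh edge if one is available, so $v$ gets ``used up'' after about $k$ passes (one edge in, one edge out at each pass), and, barring complications, the whole of $G$ is traversed edge by edge in roughly $|E(G)|=dn/2$ steps. To make this quantitative I would first record the identity
\[
\Tc^V(G)\;=\;\frac{dn}{2}\;-\;U\!\left(\Tc^V\right)\;+\;s\!\left(\Tc^V\right),
\]
where $U(t)$ is the number of edges not yet traversed after $t$ steps and $s(t)$ counts the ``stuck'' steps among the first $t$, a stuck step being one taken from a vertex all of whose incident edges have already been used. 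This holds because every step either traverses a brand--new edge (a \emph{greedy} step, which drops $U$ by one) or is stuck and leaves $U$ unchanged, so $t=(dn/2-U(t))+s(t)$. Hence it is enough to prove, w.h.p., the two estimates $s(\Tc^V)=o(n)$ and $U(\Tc^V)=o(n)$; the first yields $\Tc^V\le dn/2+o(n)$ and the second $\Tc^V\ge dn/2-o(n)$.

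For both I would work in the configuration model with deferred decisions, exposing pairings only as the walk demands them, so that each traversal of a previously unvisited edge steps to an almost uniform vertex --- which, until a constant fraction of $G$ has been explored, is w.h.p.\ a brand--new vertex carrying $d-1$ fresh edges. The key structural claim is that the walk is \emph{near--Eulerian}: as long as it enters a vertex $v$ along an edge not previously used at $v$ --- which is automatic unless the vertex that sent it to $v$ was already saturated --- the greedy rule exhausts $v$ cleanly in exactly $k$ passes and, crucially, $v$ becomes saturated only at the instant the walk last departs it, so \emph{no} stuck step is ever taken at $v$. Thus stuck steps arise only from collisions with already--saturated vertices, or from the short cycles of $G$ (w.h.p.\ contributing $o(n)$ in total, the mechanism behind the $\om$--overhead of \cite{bcf} for the edge cover time, which does not affect the vertex cover time to leading order). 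The heart of the upper bound is to show that up to time $(1+\varepsilon)dn/2$ the collisions number $o(n)$: a vertex needs about $k$ passes to become saturated while the walk makes only about $kn$ passes altogether, so saturation catches up with the exploring front only in the final $o(n)$ steps; hence by time $(1+\varepsilon)dn/2$ more than $dn/2$ greedy steps have been made, every edge and so every vertex is visited, and $\Tc^V\le(1+\varepsilon)dn/2$.

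For the matching lower bound I would show that the last vertex is not discovered until the walk has almost finished, i.e.\ $U(\Tc^V)=o(n)$. Since a random $d$-regular graph is an expander with mixing time $O(\log n)\ll n$ and the near--Eulerian walk visits each vertex about $k$ times, the visit times of a fixed vertex are spread roughly evenly over $[0,(1+o(1))dn/2]$, so its first--visit time behaves like the minimum of about $k$ roughly uniform points of that interval. Consequently the number of vertices still unvisited at time $(1-\varepsilon)dn/2$ has mean of order $n\varepsilon^{k}\to\infty$, and a second moment estimate --- using approximate independence of two distinct vertices in the configuration model --- shows this count is positive w.h.p.; equivalently, when the last vertex is finally reached, all but $o(n)$ of the $dn/2$ edges have already been traversed. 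Substituting the two estimates into the identity gives $\Tc^V(G)\sim dn/2$.

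The main obstacle is the near--Eulerian control over the whole run: bounding how seldom the walk collides with a saturated vertex before the cover time, and --- for the lower bound --- showing that a vertex's visit times are genuinely spread out rather than bunched near the start (which would let the walk finish early). Both require tracking the joint evolution of the walk and the partially exposed configuration and, above all, handling the regime near time $dn/2$, where saturated vertices and short--cycle effects proliferate and the clean Eulerian picture degrades; keeping all error terms at $o(n)$ through that regime is the delicate point.
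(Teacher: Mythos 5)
Note first that the paper does not prove Theorem~\ref{newedge}: it is quoted as background, with the result attributed to Cooper and Frieze~\cite{CFNet} (``Vacant sets and vacant nets''). There is therefore no internal proof to compare against, and I can only assess your sketch on its own terms.

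Your bookkeeping identity $t=(dn/2-U(t))+s(t)$ and the reduction to the two estimates $s(\Tc^V)=o(n)$ and $U(\Tc^V)=o(n)$ are correct and a natural framing. The Eulerian observation --- that for $d$ even a fresh arrival at $v$ consumes two unvisited edges (in and out), so $v$ is cleanly exhausted after exactly $k=d/2$ such passes --- is the right structural key, and your estimate that the number of unvisited vertices at time $(1-\varepsilon)dn/2$ has mean of order $n\varepsilon^{k}$ is the correct exponent, coming from the configuration-model fresh-step probability $\approx d/(dn-2s)$; a second-moment argument from there is the standard route for the lower bound.

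The genuine gap is the bound $s(\Tc^V)=o(n)$, and the heuristic you give for it does not hold up. It is not true that ``saturation catches up with the exploring front only in the final $o(n)$ steps'': already by time $(1-\varepsilon)dn/2$ a constant fraction (indeed almost all, for small $\varepsilon$) of vertices are saturated. But hitting a saturated vertex is not what causes a stuck step. A stuck step occurs only when the walk makes a fresh arrival at the single \emph{odd-parity} vertex of the visited subgraph at a moment when that vertex has no fresh edge left to leave by. Initially that vertex is $v_0$; each stuck excursion ends with a fresh departure from some $w$, which hands odd parity to $w$ and seeds the next sticking event. So stuck excursions recur throughout the run, not only near the end, and bounding $s(\Tc^V)$ requires control of \emph{both} the number of such excursions and the duration of each one (a walk on the already-visited subgraph until it reaches a vertex with a remaining fresh edge, which in turn requires expansion or density estimates for that evolving subgraph). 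Neither quantity is quantified in your sketch, and as written the upper-bound half of the argument does not close; this is precisely the technical core that a real proof --- such as the one in~\cite{CFNet} --- has to supply.
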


The paper \cite{bcf} included the experimental data shown in Figure \ref{fig:covertime} for the performance of red-blue walks on odd degree regular graphs. Namely, for $d=3$ the cover time is $\Theta(n \log n)$ and decreases rapidly with increasing $d$. For $d$ even, the experiments confirm the cover time result of  Theorem \ref{newedge} that $\Tc^V(G)\sim dn/2$.

\begin{figure}[H]
  \centering
 \includegraphics{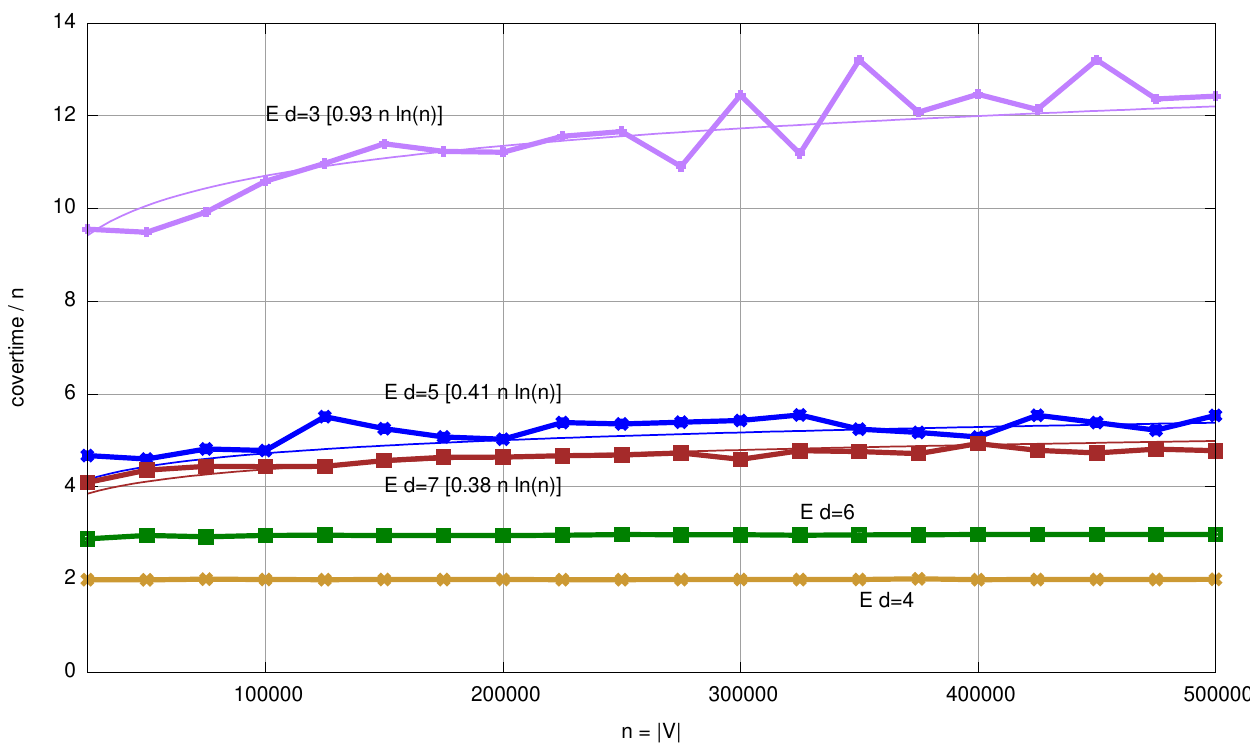}
   \caption{Normalised cover time of the unvisited edge process on random $d$-regular graphs as function of  $n=|V|$. Figure from \cite{bcf}}
  \label{fig:covertime}
\end{figure}

\subsection{Our results}
Let $G = (V, E)$ be a connected $3$-regular (multi)graph on an even number $n$ of vertices. Consider the following random walk process, called a {\em biased random walk}. It is an edge colored version of the previously described unvisited edge process. Initially color all edges red, and pick a starting vertex $v_0$. At any time, if the walk occupies a vertex incident to at least one red edge, then the walk traverses one of those red edges chosen uniformly at random, and re-colors it blue. If no such edge is available, the walk traverses a blue edge chosen uniformly at random. For $s\in\{1,\dots,n\}$ let $C_V(s)$ denote the number of steps taken by the walk until it has visited $s$ vertices, and similarly let $C_E(t)$ denote the number of steps taken to visit $t\in\{1,\dots,3n/2\}$ edges.

We will let $G$ be a random graph, and we use $\Ex{G}{X}$ to denote the expectation of $X$ with the underlying graph $G$ fixed.
\begin{theorem}\label{thm:whpcover}
Let $s,t$ be fixed such that $n-n\log^{-1}n \leq s \leq n$ and $(1-\log^{-2}n)\frac{3n}{2} \leq t\leq 3n/2$. Let $\e > 0$ also be fixed. Suppose $G$ is chosen uniformly at random from the set of $3$-regular graphs on $n$ vertices. Then with high probability, $G$ is connected and
\begin{align}
\Ex{G}{C_V(s)} & = (1 \pm \e)n\log \bfrac{n}{n-s+1} + o(n\log n), \label{vertexcover}\\
\Ex{G}{C_E(t)} & = \left(\frac32 \pm \e\right)n\log \bfrac{3n}{3n-2t+1} + o(n\log n).\label{edgecover}
\end{align}
\end{theorem}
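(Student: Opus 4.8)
The plan is to reduce everything to an ``endgame'' and run a coupon-collector-type argument there. Since $n\log\bfrac{n}{n-s+1}$ is increasing in $s$ and the permitted error is $o(n\log n)$, it suffices to prove (a) by crude arguments that the walk reaches $s_0:=n-n\log^{-1}n$ visited vertices and $t_0:=(1-\log^{-2}n)\tfrac{3n}{2}$ traversed edges within $O(n\log\log n)=o(n\log n)$ steps; and (b) to track precisely, from that moment on, the number $U$ of unvisited vertices and the number $\rho$ of red edges. For (a) one uses that while a constant fraction of vertices is unvisited the walk essentially explores $G$ and covers vertices at rate $\Omega(1)$, and that once $U=o(n)$ the unvisited set still has $\Omega(U)$ boundary edges (by expansion of $G$), so it is covered at rate $\Omega(U/n)$; integrating gives $O(n\log\log n)$, and likewise for $t_0$. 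Throughout the endgame $U$ and $\rho$ are $o(n)$, so the blue subgraph $H$ agrees with $G$ on a $(1-o(1))$-fraction of the edges; I would first show $H$ inherits the expansion of $G$, hence that simple random walk on $H$ mixes in $O(\log n)$ steps.

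Next I would record the standard w.h.p.\ properties of a random cubic graph from the configuration model: connectivity and expansion; local tree-likeness (all but $o(n)$ vertices have a tree neighbourhood out to radius $\Theta(\log\log n)$); and no dense small sets, e.g.\ every $S$ with $|S|\le n/\log n$ spans $\le(1+o(1))|S|$ edges. From the last property, together with the observation that the walk cannot forever avoid a red region having many boundary edges to already-visited vertices, I would deduce that at each endgame time the red graph is a union of components of polylogarithmic size, pairwise at distance $\Theta(\log\log n)$, and that the unvisited set is independent up to an $o(U)$ error; in particular each unvisited vertex sends its three red edges to three distinct visited vertices of $H$-degree $2$.

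The core step, and the main obstacle, is turning this structure into drift equations. Over a window of $W=\mathrm{polylog}(n)\cdot(\text{mixing time})$ steps neither $H$ nor the colouring changes by an $o(1)$-fraction, so I would freeze the configuration and prove a first-visit-time estimate of Cooper--Frieze type: the first time simple random walk on $H$ hits a fixed vertex $u$ is, up to a $1+o(1)$ factor, exponential with rate $\pi_u/R_u$, where $\pi_u=\deg_H(u)/(2|E(H)|)$ and $R_u=\sum_{k\ge0}p^k(u,u)$. Since $H$ near such a $u$ is a subdivided cubic tree, $R_u=2+o(1)$, so each degree-$2$ endpoint of a red edge is hit at rate $(1+o(1))/(3n)$. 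Summing over the three red neighbours of an unvisited vertex, new vertices are covered at rate $(1+o(1))U/n$; a red edge is consumed at rate $(1+o(1))\tfrac{2}{3n}$ if both endpoints are visited and $(1+o(1))\tfrac{1}{3n}$ if one is unvisited, and since (by the previous paragraph) there are $\approx 3U$ of the latter and $\approx\rho-3U$ of the former, $\rho$ decreases at rate $(1+o(1))\tfrac{2\rho-3U}{3n}$. The real work is bounding, by $o(1)$ times the main term, the errors from the drift of the configuration inside a window, from the forced moves along red edges, and from the $O(1)$ correlations among the simultaneous targets.

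Finally, martingale concentration (the quadratic variations are $O(n)$, so fluctuations are $O(\sqrt n\,\mathrm{polylog}\,n)$) shows $U$ and $\rho$ follow the solutions of $U'=-U/n$ and $\rho'=-(2\rho-3U)/(3n)$, i.e.\ $U(\tau)\sim U_0e^{-\tau/n}$ and $\rho(\tau)\sim\rho_0e^{-2\tau/(3n)}$ (the $e^{-\tau/n}$ term being lower order) in terms of elapsed endgame time $\tau$. Inverting, the number of steps until $U=n-s$ is $(1+o(1))\,n\log\bfrac{U_0}{n-s}$ and until $\rho=\tfrac{3n}{2}-t$ is $(1+o(1))\,\tfrac{3n}{2}\log\bfrac{\rho_0}{\tfrac{3n}{2}-t}$. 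Adding the $o(n\log n)$ cost of the initial phase, absorbing $\log U_0$ and $\log\rho_0$ into the $o(n\log n)$ error, and replacing the denominators by $n-s+1$ and $\tfrac{3n}{2}-t+\tfrac12$ to handle integrality near the very end, gives \eqref{vertexcover} and \eqref{edgecover}. The assumption that $s,t$ are near their maxima is exactly what forces $\rho=o(n)$, which is what makes the expansion of $H$ and the first-visit estimates available.
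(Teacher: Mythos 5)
Your plan has the same overall shape as the paper's proof: reduce to an endgame where the number of red edges is $o(n)$, bound the initial phase crudely by $o(n\log n)$, then in the endgame combine a Cooper--Frieze return-probability/hitting-time estimate with a drift and martingale-concentration argument for the first-order quantities, and finally handle the very last edges/vertices by the spectral-gap bound. Your parameterisation by walk steps $\tau$ with ODEs for $(U,\rho)$ is dual to the paper's parameterisation by number of discovered edges $t$ and its exact configuration-model recurrences for $X_1(t),X_3(t)$ (note $\rho=\tfrac{3n}{2}-t$ and $U=X_3$, so these are the same quantities). Your $U'=-U/n$ and $\rho'\approx-2\rho/(3n)$ integrate to exactly the paper's $n\log(\cdot)$ and $\tfrac32 n\log(\cdot)$, so the arithmetic checks out.

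However there is a genuine gap, and it is located precisely where you wave your hands. To get the constant $3$ in the hitting rate (equivalently, your $R_u=2+o(1)$ and the assertion that hitting $\ol X$ is the independent superposition of per-vertex rates), you must prove that the visited-exactly-once vertices $X_1(t)$ are \emph{pairwise well-separated} in the traversed graph $G(t)$ out to radius $\Theta(\log\log n)$, so that the contracted set behaves like a collection of far-apart subdivided tree-roots rather than a blob. You propose to get this from (a) the fact that small sets of $G$ are sparse, and (b) the heuristic that ``the walk cannot forever avoid a red region having many boundary edges.'' Neither of these yields the claim: (a) is a property of the ambient graph $G$ and does not control the walk-dependent set $\ol X(t)$ -- each $X_1$ vertex has only one red edge, so even a heavily clustered $X_1$ need not create a dense subset of $G$ -- and (b) is not a precise statement and, as stated, says nothing about the configuration at a \emph{fixed} endgame time. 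If $\ol X(t)$ were clustered, $Z_{ss}$ for the contracted node would be inflated, the hitting time would exceed $3n/|\ol X|$, and the constant would be wrong. The paper's resolution is Lemma~\ref{lem:uniform}: conditional on the ``contracted walk'' $[W(t)]$, the once-visited vertices are distributed \emph{uniformly at random} among the green bridges (a P\'olya-urn picture), together with Lemma~\ref{lem:Phiconcentration} showing the number of green edges $\F(t)$ is $\gg |X_1(t)|$; these two facts give the separation (the root-set property in Definition~\ref{def:rootset} and Lemmas~\ref{lem:expansionviaconcentration}--\ref{lem:expansionwithoutconcentration}). You have no substitute for this exchangeability observation, and it is the technical heart of the proof.

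A secondary omission: the theorem asserts a bound on the \emph{conditional} expectation $\Ex{G}{C_V(s)}$ for a.e.\ $G$, not on the average over all $G$. Your martingale concentration is over walk randomness and would indeed give this if carried out conditionally on $G$, but you never separate the two sources of randomness; the paper devotes Section~\ref{sec:whp} (Lemma~\ref{lem:whp}) to a Markov-inequality transfer argument precisely to upgrade the averaged estimate to a ``w.h.p.\ over $G$'' statement. You should either condition on $G$ throughout or supply an analogous transfer step.
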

Here $a = b\pm c$ is taken to mean $a \in [b-c, b + c]$. Note in particular that this shows that the expected vertex and edge cover times are asymptotically $n\log n$ and $\frac32 n\log n$ with high probability, respectively. The same statement is true with the word ``graphs'' replaced by ``configuration multigraphs''. Thus we have the following corollary.
\begin{corollary}\label{corr1}
W.h.p. the vertex cover time $\Tc^V(G)$ of $G$ is asymptotically equal to $n\log n$ and the edge cover time $\Tc^E(G)$ is asymptotically equal to $\frac32n\log n$.
\end{corollary}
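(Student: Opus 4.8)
The plan is to derive Corollary~\ref{corr1} from Theorem~\ref{thm:whpcover} and then to prove the theorem. Taking $s=n$ in \eqref{vertexcover} gives, for every fixed $\e>0$ and w.h.p., $\Ex{G}{C_V(n)}=(1\pm\e)n\log n+o(n\log n)$; since this holds for each $\e$, a routine diagonalisation along a sequence $\e\downarrow 0$ gives $\Ex{G}{C_V(n)}=(1+o(1))n\log n$ w.h.p., and likewise $t=3n/2$ in \eqref{edgecover} gives $\Ex{G}{C_E(3n/2)}=(\tfrac32+o(1))n\log n$ w.h.p. (If $\Tc^V(G)$ and $\Tc^E(G)$ are to be read as the random cover times rather than their $G$-conditional means, one appends a concentration step: over a window of $\Theta(n)$ consecutive steps near the $\sim n\log n$ mark the walk visits only $o(n/\log n)$ new vertices, so $C_V(n)$ lies within $o(n\log n)$ of its mean w.h.p.; this follows from the same estimates used below.) So it suffices to prove Theorem~\ref{thm:whpcover}.

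I would work in the configuration model (transferring to the uniform random cubic graph at the very end via contiguity) and couple the walk with the revelation of $G$: at a vertex having an unrevealed (``red'') half-edge the walk pairs that half-edge with a uniformly random unrevealed half-edge and steps across it --- which is precisely the biased rule and which reveals one new edge --- while at a vertex all of whose half-edges are revealed the walk takes an ordinary simple-random-walk step on the already-revealed (``blue'') graph $B$. Thus $B$ grows by one edge on each \emph{productive} step and is unchanged on each \emph{wandering} step, and there are exactly $3n/2$ productive steps over the whole run. I would follow the triple $(u,a,b)$ = (number of unvisited vertices; of visited vertices with two unrevealed half-edges; of visited vertices with one), so $H:=3u+2a+b$ is the number of unrevealed half-edges and $k:=(3n-H)/2$ the number of blue edges. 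An \emph{early phase} analysis by the differential-equation (Wormald) method then shows that after $O(n)=o(n\log n)$ steps the walk has visited $(1-o(1))n$ vertices and revealed $(1-o(1))\tfrac32 n$ edges, with $(u,a,b)$ and the relevant expansion parameters of $B$ concentrated on their fluid limit. This phase is negligible for the leading term but fixes the ``typical'' state seeding the next phase, and along the way one records the structural facts needed later: w.h.p.\ $G$ (hence $B$) is connected, the restriction of $B$ to its saturated vertices is an expander, and the non-saturated vertices are well spread.

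The core is the \emph{late phase}. Here only $o(n)$ more edges get revealed, so there are only $o(n)$ more productive steps; hence up to an $o(n)$ additive error the walk just performs a simple random walk on $B$ (which is now $G$ with $o(n)$ edges deleted, an expander on $\Theta(n)$ vertices), and the whole $\sim n\log n$ is accounted for by the times the walk takes to reach the shrinking frontier $N$ of visited non-saturated vertices. From a typical state with $u$ unvisited vertices, $N$ has $a+b$ vertices of stationary weight $\pi(N)=(a+2b)/(2k)$, so by a hitting-time estimate (with return factor tending to $1$ since $|N|$ is small) the walk reaches $N$ in $(1+o(1))/\pi(N)$ steps; once there, a short \emph{cascade} of productive steps fires, which keeps landing on fresh vertices (probability $3u/H$ per step, close to $1$ while $u$ dominates $H$) until it lands on a vertex it saturates, thereby harvesting on the order of $\ell$ new vertices, where $\ell$ is itself controlled by the fluid limit through the rate at which singletons are created and consumed along the cascade. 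Feeding the fluid-limit values of $a,b,\ell$ into these formulas should yield expected time $(1+o(1))n/u$ to visit one more vertex and $(1+o(1))3n/H$ to reveal one more edge; summing over the late phase (including, for the edge count, the final stretch after the last vertex is visited, during which the walk clears the remaining half-edges by the same wander-then-cascade mechanism) produces $n\log\bfrac{n}{n-s+1}$ and $\tfrac32 n\log\bfrac{3n}{3n-2t+1}$, i.e.\ \eqref{vertexcover} and \eqref{edgecover}. The $\pm\e$ absorbs the $(1+o(1))$'s and the $o(n\log n)$ absorbs the early phase plus accumulated lower-order terms.

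I expect the late-phase estimate to be the genuine obstacle, and in two interlocking respects. First, the constant-chasing: one must set up and solve the fluid-limit ODE system for $(u,a,b)$ together with the embedded cascade sub-process precisely enough that the resulting values of $\pi(N)$, of the cascade length $\ell$, and of the hitting-time return factor conspire to give the clean leading coefficients $1$ and $\tfrac32$ rather than something slightly different --- this is really the whole content of the theorem, and the cascade mechanism is exactly what must cancel the $\tfrac{d-1}{d-2}=2$ overhead that makes plain simple-random-walk cover $\sim 2n\log n$ on a random cubic graph. Second, the probabilistic control: the walk's wandering trajectory is correlated with $B$, which is still being revealed, so one must argue the unrevealed pairings stay conditionally uniform at the moments that matter, and one needs hitting-time (and mixing) estimates on the random, slowly-evolving subgraph $B$ that are accurate to a $1+o(1)$ factor and uniform over the $\Theta(n\log n)$-step late phase, which demands fairly robust expansion control for subgraphs of a random cubic graph and care about the small set of low-degree non-saturated vertices one is trying to hit. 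A smaller nuisance is that the early-phase ODEs degenerate as $u$ falls to the late-phase scale, so the hand-off between the two regimes must be carried out explicitly.
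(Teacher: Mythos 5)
Your reduction of the corollary to Theorem~\ref{thm:whpcover} is precisely the paper's: take $s=n$ in \eqref{vertexcover} and $t=3n/2$ in \eqref{edgecover} (so $\log(n/(n-s+1))=\log n$ and $\log(3n/(3n-2t+1))=\log 3n\sim\log n$), and let $\e\downarrow 0$. The paper treats this as immediate. It reads $\Tc^V(G)$ as the $G$-conditional expected cover time $\Ex{G}{C_V(n)}$, so the extra concentration aside you flag is not required for the statement as the paper reads it.

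Everything after ``So it suffices to prove Theorem~\ref{thm:whpcover}'' is a sketch of the theorem itself, which goes well beyond the corollary but is worth a comparison since your route is genuinely different. You share the configuration-model exposure and the ``wander to the frontier, discover, repeat'' picture, but the paper uses neither the Wormald ODE method nor a cascade sub-process. It writes closed-form recurrences for $\E{X_i(t)}$, gets concentration via moment-generating-function bounds (Lemma~\ref{lem:concentration}), and — the decisive ingredient — proves the exchangeability Lemma~\ref{lem:uniform} showing once-visited vertices are uniformly distributed on the green edges of the contracted walk; that is what verifies the ``root set'' hypothesis (Definition~\ref{def:rootset}) so the hitting-time Lemma~\ref{lem:roothitting} applies. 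Between consecutive edge discoveries the paper simply computes the hitting time of $\ol X(t)$ once, as in \eqref{eq:conditionincrement} and \eqref{eq:condincrement}; no cascade bookkeeping is needed.

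There is also a concrete error in your sketch at exactly the spot you flag as the obstacle: the return factor does not tend to $1$. Contracting the frontier gives a node sitting at the root of a locally $3$-regular tree, so $Z_{NN}=\sum_t\bigl(P^{(t)}_N(N)-\pi_N\bigr)\to 2$ in the blue-graph accounting, not $1$; equivalently, in the paper's full-graph accounting the contracted node carries $|\ol X|/2$ self-loops and Lemma~\ref{lem:treehitting} yields $Z_{ss}\to 3$. Either way the hitting time is $\approx 3n/(3n-2t)$, while $(1+o(1))/\pi(N)$ with $\pi(N)=(a+2b)/(2k)$ gives roughly half of that. That discrepancy would propagate directly into the leading coefficients $1$ and $\tfrac32$, and I do not see it recovered by your cascade mechanism. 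Nailing this constant is what Lemmas~\ref{lem:treehitting}--\ref{lem:roothitting} are for.
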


It is of interest to compare the result of Corollary \ref{corr1} with other versions of random walk. Cooper and Frieze \cite{CFreg} showed that w.h.p. the vertex cover time of a random $d$-regular graph on $n$ vertices is asymptotically equal to $\frac{r-1}{r-2}n\log n$. The argument there also shows that the edge  cover time of a random $d$-regular graph on $n$ vertices is asymptotically equal to $\frac{r(r-1)}{2(r-2)}n\log n$. For $r=3$ these values are $2n\log n$ and $3n\log n$ respectively and are to be compared with $n\log n$ and $\frac32n\log n$. For a non-bactracking random walk, Cooper and Frieze \cite{CFNet} show that the vertex and edge cover times are asymptotically $n\log n$ and $\frac{r}{2}n\log n$ respectively. Interestingly, these values coincide with the results in Corollary \ref{corr1}.

\section{Outline proof of  Theorem \ref{thm:whpcover}}
We will choose the multigraph $G$ according to the configuration model. Each vertex $v$ of $G$ is associated with a set $\PP(v)$ of $3$ configuration points. We set $\PP = \cup_v \PP(v)$ and generate $G$ by choosing a pairing $\m$ of $\PP$ uniformly at random. The pairing $\m$ is exposed along with the biased random walk.

Starting at a uniformly random configuration point $x_1\in \PP$, we define $W_0 = (x_1)$. Given a walk $W_k = (x_1,x_2,\dots,x_{2k+1})$, the walk proceeds as follows. Set $x_{2k+2} = \m(x_{2k+1})$, thus exposing the value of $x_{2k+1}$ if not previously exposed. If $x_{2k+2}$ belongs to a vertex $v$ which is incident to some red edge (other than $(x_{2k+1},x_{2k+2})$ which is now recoloured blue), the walk chooses one of the red edges uniformly at random, setting $x_{2k+3}$ to be the corresponding configuration point. Otherwise, $x_{2k+3}$ is chosen uniformly at random from $\PP(v)$. Set $W_{k+1} = (x_1,\dots,x_{2k+3})$. We will refer to $x_1$ and $x_{2k+1}$ (and the vertices to which they belong) as the {\em tail} and {\em head} of $W_k$, respectively. We will also refer to $\set{x_1,x_2,\dots,x_{2k+1}}$ as the points of $\PP$ that have been {\em visited}.

Define partial edge and vertex cover times
\begin{align}
C_E(t) & = \min\{k : W_k \text{ spans $t$ edges}\}, \\
C_V(t) & = \min\{k : W_k \text{ spans $t$ vertices}\}.
\end{align}
We will mainly be concerned with the partial edge cover time, and write $C(t) = C_E(t)$ from this point on.

For $t \in \{1,2,\dots,\frac{3n}{2}\}$ we define a subsequence of walks by
\beq{tk}{
W(t) = W_{C(t)-1} = (x_1,x_2,\dots,x_{2k+1})
}
where $k$ is the smallest integer such that $|\{x_1,x_2,\dots,x_{2k+1}\}| = 2t - 1$. In other words, $W(t)$ denotes the walk up to the point when $2t-1$ of the members of $\PP$ have been visited. Thus throughout the paper:
\begin{itemize}
\item Time $t$ is measured by the number of edges $t$ that have been visited at least once.
\item The parameter $\d=\d(t)$ is given by the equation
\beq{defd}{
t=(1-\d)\frac{3n}{2}.
}
$\d(t)$ is  important as a measure of how close we are to the edge cover time.
\item the walk length $k$ is measured by the number of steps taken so far. Equation \eqref{tk} relates $t$ and $k$.
\end{itemize}

A 3-regular graph $G$ chosen u.a.r. is connected w.h.p. and we will implicitly condition on this in what follows. The bulk of the paper will be spent proving the following lemma.
\begin{lemma}\label{lem:Cstar}
For any fixed $\e > 0$ and $(1 - \log^{-2}n)\frac{3n}{2} \leq t \leq \frac{3n}{2}$,
\begin{equation}\label{eq:edgestar}
\E{C(t)} = \left(\frac{3}{2} \pm \e\right)n\log \bfrac{3n}{3n-2t+1} + o(n\log n)
\end{equation}
for $n$ large enough. Furthermore, for $n - \frac{n}{\log n} \leq s \leq n$,
\begin{equation}\label{eq:vertexstar}
\left(1-\e\right)n\log \bfrac{n}{n - s + 1} \leq
\E{C_V(s)}
\leq \left(1 + \e\right)n\log \bfrac{n}{n - s+1}.
\end{equation}
\end{lemma}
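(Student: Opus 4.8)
Write $r=r(t)=\frac{3n}{2}-t$ for the number of red (unvisited) edges once $t$ edges have been covered, so $r=\d(t)\cdot\frac{3n}{2}$, and set $\tau_{t'}=C(t'+1)-C(t')$, so that $C(t)=C(t_0)+\sum_{t'=t_0}^{t-1}\tau_{t'}$ for any $t_0\le t$. Take $t_0=\bigl(1-(\log n)^{-2}\bigr)\frac{3n}{2}$. The plan has two parts. First, a \emph{crude} bound: $\E{C(t_0)}=O(n\log\log n)=o(n\log n)$, i.e.\ the walk covers all but a $(\log n)^{-2}$ fraction of the edges quickly. Second, a \emph{sharp per-edge estimate} for the ``endgame'' $t_0\le t'<\frac{3n}{2}$ (where $r(t')=o(n/\log n)$): conditioned on the history $\FF_{C(t')}$ of the process up to the time $t'$ edges are covered,
\[
\E{\tau_{t'}\mid\FF_{C(t')}}=(1\pm\e_1)\,\frac{3n}{2\,r(t')}=(1\pm\e_1)\,\frac{1}{\d(t')}
\]
for any fixed $\e_1>0$ and $n$ large. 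Granting these, summing the per-edge estimate over $t'=t_0,\dots,t-1$ and using $r(t')=\frac{3n}{2}-t'$ turns $\sum\frac{3n}{2r(t')}$ into $\frac{3n}{2}\log\frac{3n}{3n-2t+1}$ up to an additive $o(n\log n)$; the $\pm\e$ of \eqref{eq:edgestar} absorbs the multiplicative $(1\pm\e_1)$ errors, and the $o(n\log n)$ absorbs $\E{C(t_0)}$ and the accumulated lower-order terms. The crude bound itself is proved the same way with only a one-sided $\E{\tau_{t'}\mid\FF_{C(t')}}=O(n/r(t'))$, which does not require pinning down the constant; and for the part of $[0,t_0]$ with $r=\Omega(n)$ an even simpler $O(n)$ bound suffices, since there almost every step consumes a red edge (a step fails to do so only at a saturated vertex, and saturated vertices form small subcritical clusters when a bounded-away-from-$1$ fraction of edges are blue).

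For the endgame estimate I would expose the configuration pairing along the walk, so that conditioned on $\FF_{C(t')}$ the still-unpaired points are matched by a uniformly random pairing; hence the ``completion'' of $G$ is essentially a random cubic multigraph, which whp is an expander with mixing time $O(\log n)$, and an (essentially) simple random walk on it reaches a near-uniform distribution on directed edges within $O(\log n)$ steps from any start. Since $r=o(n)$ in the endgame the blue subgraph is almost all of $G$, so away from red edges the biased walk is literally a simple random walk, whereas when it meets a red edge it greedily eats the red cluster containing it --- whp of size $o(\log n)$ --- in a number of extra steps proportional to that cluster's size. Discovering a new edge is therefore a hitting problem: the walk must reach the (spread-out, random-looking) set of red-incident configuration points, and a mixing-then-hitting computation --- a mixed step traverses a near-uniform directed edge, a suitable fraction of which are red --- yields $\E{\tau_{t'}\mid\FF_{C(t')}}=(1\pm\e_1)\frac{3n}{2r}$. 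The vertex bound \eqref{eq:vertexstar} follows from the same machinery: a new vertex is discovered at a rate proportional to the number $n-s$ of unvisited vertices, and because the biased walk leaves a nearly saturated vertex along an unvisited edge it does not dwell there, so the relevant ``return factor'' is within any fixed $\e$ of $1$; summing gives the two-sided $(1\pm\e)$ bounds, and here only that weaker control of the constant is needed, so cruder estimates than in the edge case suffice.

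\textbf{Main obstacle.} The crux is to make ``the walk behaves like simple random walk on a fixed expander and the red edges look random'' precise exactly where it is least clear: near the red edges, where the process is biased rather than simple, and at the very end, where only $O(1)$ red edges remain and the local structure of the random cubic graph --- its bounded number of short cycles in particular --- affects the exact constant. I expect the real work to be (i) a structural description of the red-edge/unvisited set throughout the endgame, showing it stays spread out like a random sparse set and controlling its cluster structure finely enough to pin the constant down, and (ii) a hitting-time estimate for that set that correctly accounts both for the biased steps and for the $O(1)$ short cycles, so that the edge constant comes out to exactly $\tfrac32$ (and the vertex constant to within $\e$ of $1$). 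Coping with the dependence introduced by exposing the pairing along the walk, and the crude prefix bound $\E{C(t_0)}=o(n\log n)$, are comparatively routine by contrast.
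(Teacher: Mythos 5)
Your decomposition (crude prefix bound, then a sharp per-edge hitting-time estimate for the endgame, then sum) matches the paper's plan, and you correctly identify the target $\E{C(t+1)-C(t)} \approx \frac{3n}{2r}$ where $r = \frac{3n}{2}-t$. However, the crux --- pinning the constant down to $3$ (equivalently $3/2$ in the integrated form) --- is not actually derived, and your heuristic ``a mixed step traverses a near-uniform directed edge, a suitable fraction of which are red'' does not give it. If one naively geometrizes the hitting problem, the probability per mixed step of occupying a vertex of $\ol X(t)=X_1\cup X_2\cup X_3$ is $\pi_{\ol X}\approx|\ol X|/n\approx 2r/n$, which would give hitting time $n/(2r)$, off by a factor of $3$. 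The missing factor is the return count $Z_{ss}=\sum_{\tau\geq 0}\bigl(P_s^{(\tau)}(s)-\pi_s\bigr)$ of the contracted node $s$ obtained by collapsing $\ol X$, and getting $Z_{ss}\to 3$ requires that $\ol X$ have a very specific structure: roughly $|\ol X|/2$ internal (red) edges, becoming self-loops at $s$, with the rest of the local neighbourhood tree-like to depth $\om$. This is what the paper's Lemmas \ref{lem:treehitting} and \ref{lem:roothitting} compute, and the hard work --- which you flag as an ``obstacle'' but do not address --- is showing $\ol X(t)$ is a root set w.h.p.\ (Lemmas \ref{lem:expansionviaconcentration} and \ref{lem:expansionwithoutconcentration}), which in turn requires the uniform-placement lemma for once-visited vertices (Lemma \ref{lem:uniform}) and concentration of $X_1$, $X_3$ and the green-edge count $\F$ (Lemma \ref{lem:concentration}). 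Without these you cannot improve the crude $\frac{1}{1-\l}\frac{n}{|\ol X|}$ from \eqref{eq:js} to the sharp $3n/|\ol X|$.

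Your vertex-cover argument is also not a valid route to the constant $1$ in \eqref{eq:vertexstar}. That constant is not a ``return factor close to $1$''; it is a change-of-variables artefact between edge-time and vertex-time. The paper writes $C_V(s)=C(\t_s)$ with $\t_s=\min\{t:|X_3(t)|=n-s\}$; the concentration $X_3(t)\approx n\,\d(t)^{3/2}$ gives $\d(\t_s)\approx\bigl((n-s)/n\bigr)^{2/3}$, whence $\frac32 n\log\bfrac{1}{\d(\t_s)}\approx \frac32\cdot\frac23\, n\log\bfrac{n}{n-s}=n\log\bfrac{n}{n-s}$. So the $1$ is the product of the sharp edge constant $3/2$ and the exponent $3/2$ in $X_3\approx n\d^{3/2}$; ``cruder estimates than in the edge case'' do not suffice, you need both sharp ingredients. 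Finally, your sharp per-edge estimate cannot run all the way to $t=\frac{3n}{2}$: for the last $O(\log^{11}n)$ edges the root-set structure cannot be maintained, and the paper handles that tail separately with the crude $\frac{1}{1-\l}\frac{n}{3n-2t}$ bound (Lemma \ref{lem:lastedges}), contributing only $o(n\log n)$.
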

Expectations in Lemma \ref{lem:Cstar} are taken over the full probability space. In particular, if $\GG$ denotes the set of graphs,
$$
\frac32 n\log\bfrac{3n}{3n-2t+1} \approx \E{C(t)} = \frac{1}{|\GG|}\sum_{G\in\GG} \Ex{G}{C(t)}.
$$
In Section \ref{sec:whp} we strengthen Lemma \ref{lem:Cstar} to stating that almost every $G$ satisfies $\Ex{G}{C(t)} \approx \frac32 n\log(3n/(3n-2t+1))$ (and similarly for $C_V(s)$). Theorem \ref{thm:whpcover} follows.

An essential part of the proof of Lemma \ref{lem:Cstar} is a set of recurrences for the random variables $X_i(t)$, where $X_i(t)$ is the number of vertices incident with $i=0,1,2,3$ untraversed edges at {\em time} $t$, $t=1,2,...,3n/2$. We will argue that for most of the process, it takes approximately $3n/(3n-2t)$ steps of the walk to increase time by one. As the process finishes at time $3n/2$ we see that the edge cover time should be approximately
$$\sum_{t=1}^{3n/2}\frac{3n}{3n-2t+1}\approx \frac32n\log n,$$
as claimed in Corollary \ref{corr1}.

Now the recurrence for $X_3(t)$ has a solution that implies that $X_3(t)\approx n\d^{3/2}$, where $\d$ is as in \eqref{defd}. Given this, we would expect $X_3(t)$ to be zero when $\d$ is smaller than $n^{-2/3}$ or equivalently, when $3n/2-t$ is less than $n^{1/3}$. Thus we would expect that vertex cover time to be
$$\sum_{t=1}^{3n/2-n^{1/3}}\frac{3n}{3n-2t+1}\approx n\log n,$$
as claimed in Corollary \ref{corr1}.

We separate the proof of Lemma \ref{lem:Cstar} into phases. Define
$$\d_0=\frac{1}{\log\log n},\ \d_1 = \log^{-1/2}n,\ \d_2=\log^{-2}n,\ \d_3 = n^{-2/3}\log^4 n \text{ and }\d_4 = n^{-1}\log^{11}n$$
and set
\beq{tdef}{
t_i = (1-\d_i)\frac{3n}{2}\text{  for }i=0,1,2,3,4.
}
We do not hesitate to remind the reader of the meaning of these quantities.

The first phase, in which the first $t_1$ edges are discovered, will not contribute significantly to the cover time.
\begin{lemma}\label{lem:phaseone}
Let $\d_1 = \log^{-1/2} n$ and $t_1 = (1-\d_1)\frac{3n}{2}$. Then
$$
\E{C(t_1)} = o(n\log n).
$$
\end{lemma}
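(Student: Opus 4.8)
The plan is to bound $C(t_1)$ by controlling how fast new edges are discovered during the first phase, where $\d \ge \d_1 = \log^{-1/2} n$ is relatively large. The intuition given in the outline is that each step of the walk discovers a new edge with probability roughly proportional to the fraction of untraversed edges at the current vertex, and while $\d$ is large this fraction is bounded away from zero on average. So the key quantity to analyze is, at a given time $t$ (with $2t-1$ configuration points visited), the expected number of walk steps needed before a fresh (red) edge is traversed. I would show this expected waiting time is $O(1/\d)$, or more crudely $O(\log^{1/2} n)$, uniformly over $t \le t_1$, and then sum: $\E{C(t_1)} \le \sum_{t=1}^{t_1} O(1/\d(t))$. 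Since $1/\d(t) = \frac{3n}{2(3n/2 - t)}$ and we stop at $t_1 = (1-\d_1)\tfrac{3n}{2}$, this sum is $O(n \log(1/\d_1)) = O(n \log\log n) = o(n\log n)$, giving the claim.

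The steps, in order: (1) Set up the exposure of the pairing $\m$ along the walk as in the outline, tracking the variables $X_i(t)$ (number of vertices with $i$ untraversed edges). (2) Show that at time $t = (1-\d)\tfrac{3n}{2}$, a constant fraction of configuration points are unvisited — more precisely $|\PP| - (2t-1) = 3\d n + 1$ points are unvisited — so when the walk follows a blue edge to its endpoint $v$, the probability that $v$ is incident to a red edge, and hence that the \emph{next} step traverses a fresh edge, is bounded below by something like a constant times $\d$ (the endpoint is roughly a uniformly random unvisited-or-visited point, and a positive fraction of the graph still has red edges around). (3) Deduce that the number of steps between consecutive increments of $t$ is stochastically dominated by a geometric random variable with success probability $\Omega(\d)$, so its expectation is $O(1/\d)$; here one must be slightly careful that "fresh edge traversed" can also happen because the walk lands directly on a new point via $x_{2k+2} = \m(x_{2k+1})$, which only helps. (4) Sum the per-edge expected waiting times over $t = 1, \dots, t_1$ and bound the harmonic-type sum by $O(n\log(1/\d_1)) = O(n\log\log n)$.

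The main obstacle is step (2)–(3): establishing a uniform lower bound $\Omega(\d)$ on the per-step probability of discovering a new edge, valid w.h.p. (or in expectation over the pairing) throughout the whole first phase, rather than just at a typical time. The difficulty is that the walk may get temporarily stuck in a small already-explored region with no red edges nearby, wandering on blue edges for a while; one has to argue such excursions are short. I would handle this by noting that from any vertex, a short blue-edge walk reaches a vertex incident to a red edge with probability bounded below, because the visited subgraph has bounded degree $3$ and has only $O(\d n)$ "deficiency" — equivalently, the number of half-edges still red is $\Omega(\d n)$, so the visited blue subgraph cannot be too large or too expander-like to trap the walk for long. A cleaner alternative, which I would actually pursue, is to avoid a pointwise bound entirely: bound $\E{C(t_1)}$ by $\E{C(t_1)} = \sum_{k \ge 0} \Prob{C(t_1) > k}$ and use a drift/martingale argument on the number of discovered edges as a function of walk steps $k$, showing it has positive drift $\Omega(\d_1)$ while $t \le t_1$, then invoke a standard hitting-time estimate. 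Either way, the quantitative input is that a positive fraction of edges are still red throughout phase one, which is immediate from $\d \ge \d_1$.
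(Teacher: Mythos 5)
Your overall skeleton is the same as the paper's: bound $\E{C(t+1)-C(t)}$ for each $t\le t_1$ and sum a harmonic-type series. Your arithmetic at the end is fine (though note you claim a sharper per-edge bound, $O(1/\d)$, than the paper actually proves, giving you $O(n\log\log n)$ instead of the paper's $O(n/\d_1)=O(n\log^{1/2}n)$; both are $o(n\log n)$, so this discrepancy alone is not a problem).

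The genuine gap is the per-edge (or per-step) bound itself, and you half-see it but do not resolve it. Your claim is that the waiting time between discoveries is dominated by a geometric random variable with success probability $\Omega(\d)$ per walk step. This is not true pointwise: once the walk enters the blue subgraph on $X_0$, a given step has discovery probability zero unless the current vertex is adjacent to $\ol X$, and the walk may wander for a while. You acknowledge this, but the two fixes you sketch do not close the hole. The obvious route to an $O(1/\d)$ hitting time of $\ol X$ is the eigenvalue bound \eqref{eq:js}, but that estimate is from the \emph{stationary} distribution and requires $\ol X(t)$ to be contracted inside an expander; applying it from the walk's actual position introduces an additive $O(\log n)$ mixing term per edge, and summing $O(\log n)$ over $\Theta(n)$ values of $t$ gives $O(n\log n)$, exactly the order you must beat. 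The drift/martingale alternative has the same problem in disguise: the "positive drift $\Omega(\d_1)$ per step" statement is just the per-step discovery probability claim again, and it is false when the walk is deep inside $X_0$.

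What the paper does instead is invoke the Ajtai--Koml\'os--Szemer\'edi theorem (Theorem \ref{thm:aks}) on the number of walks in an expander that avoid a fixed set $Z$. This yields an expected hitting-time bound $\Ex{A}{H(B')} \le \frac{2(1-c)n}{|X_1(t)|}\cdot\frac{1}{c(1-\l)}$ with $c=|B'|/n$, valid \emph{without} any mixing-time additive term, because it controls the probability of avoiding $B'$ for $\ell$ steps starting from a uniformly random vertex of $A$ directly. (To make $Z$ fixed, the paper splits $X_1$ into halves $A,A'$ and hits $B'=A'\cup X_2\cup X_3$; the exit vertex into $X_0$ territory is uniform over $X_1$ by the configuration-model exposure, so starting from a uniformly random element of $A$ is legitimate.) This gives $\E{C(t+1)-C(t)}=O\left(n^2/(3n-2t)^2\right)$ in Phase II after using the concentration bound $|X_1(t)|=\Theta(3n-2t)$ from Lemma \ref{lem:concentration}(ii), which sums to $O(n/\d_1)=o(n\log n)$; and in Phase I ($t\le 3n/4$) the bound $|X_3|=\Omega(n)$ makes the increment $1+O(t/n)$, summing to $O(n)$. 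Your proposal is missing both the AKS-type device for avoiding the mixing-time penalty and the concentration input for $|X_1(t)|$; as written, the central estimate is asserted rather than proved.
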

Between times $t_1$ and $t_4$ we bound the time taken between discovering new edges. The proof, in Section \ref{sec:covertime}, will be split into the ranges  $t_1\leq t\leq t_3$ and $t_3\leq t\leq t_4$.
\begin{lemma}\label{lem:increment}
Let $\e > 0$. For $t_1 \leq t \leq t_4$ and $n$ large enough,
$$
\E{C(t+1) - C(t)} = \left(3 \pm \e\right)\frac{n}{3n-2t} + O(\log n).
$$
\end{lemma}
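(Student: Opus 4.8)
The plan is to track the walk from the moment it has just discovered the $t$-th edge until it discovers the $(t+1)$-st, and to compute the expected number of steps in between by a renewal-type argument on the head of the walk. Write $\delta = \delta(t)$ as in \eqref{defd}, so that $3n - 2t = 3n\delta$, and recall that $X_i(t)$ is the number of vertices incident with exactly $i$ untraversed (red) edges at time $t$. The key structural fact I would first establish (or import from the recurrences and concentration estimates developed earlier in Section \ref{sec:covertime}) is that at time $t$ the numbers of red-degree-$1$ and red-degree-$2$ vertices satisfy $X_1(t) = (1+o(1))\cdot 3n\delta(1-\sqrt{\delta})$ and $X_2(t)$ is of smaller order, while $X_3(t) \approx n\delta^{3/2}$; in all cases the total red degree is $2(3n/2 - t) = 3n\delta$. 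Consequently, a \emph{uniformly random red configuration point} belongs to a red-degree-$1$ vertex with probability $1 - O(\sqrt{\delta} + 1/\log n)$ for $t$ in the stated range.

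Next I would set up the step-counting. When the walk arrives at a vertex $v$ along a blue edge (no new edge discovered), it must move along another blue edge; since $G$ is essentially locally treelike and the red edges are spread out, a block of consecutive ``no-progress'' blue steps behaves like a random walk on a short path/tree segment until it next hits a vertex with a red edge incident. The expected length of such an excursion is where the $O(\log n)$ error term comes from: w.h.p.\ the relevant local neighbourhoods have no short cycles and bounded diameter on the scale of $\log n$, so the walk cannot get trapped for longer than $O(\log n)$ steps in expectation before reaching a red edge. Once the walk reaches a vertex incident to a red edge and traverses it, the new endpoint $x_{2k+2} = \mu(x_{2k+1})$ is a \emph{fresh} configuration point (exposed now), hence uniform among the unexposed points; with probability $\approx X_1(t)/(3n\delta) = 1 - o(1)$ it lands on a red-degree-$1$ vertex, in which case the walk is forced to immediately traverse that vertex's other red... wait — a red-degree-$1$ vertex has exactly one red edge, which is the one just used, so actually the walk must then take a blue step, and we are back to a no-progress excursion. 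The point is that each time the walk traverses a red edge it discovers exactly one new edge, \emph{advancing $t$ by one}, except we must be careful that $x_{2k+2}$ could itself be a new point on a new vertex — but that is accounted for in the pairing exposure and does not change the count of newly traversed edges per red-step.

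Assembling these pieces: let $N$ be the number of walk-steps from $C(t)$ to $C(t+1)$. Decompose $N$ into (i) the blue excursion needed to return to a red edge and (ii) the single red step that discovers edge $t+1$. The probability that, having just discovered edge $t$, the current head already has another red edge is $O(\sqrt\delta + X_2(t)/(3n) + \cdots) = o(1)$, so typically one full excursion is required. The expected excursion length is governed by the density of red configuration points among all points, which is $3n\delta/(3n) = \delta$ — but more precisely by the probability that a blue step lands at a vertex \emph{still} carrying a red edge; since a uniformly random non-head point carries a red edge with probability $\approx \delta$, a geometric-type argument gives expected excursion length $(1+o(1))/\delta = (1+o(1))\cdot \frac{3n}{3n-2t}$, with the local-structure corrections absorbed into the additive $O(\log n)$. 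Multiplying by $3$ accounts for the fact that one excursion corresponds to $\approx 1/3$ of a unit of ``$t$-time'' being... more carefully, the factor $3$ arises because discovering a new edge requires hitting one of the $\approx 3n\delta$ red points out of $3n$ total, and the return mechanism of the blue walk visits points at rate proportional to stationary (degree) measure, yielding the stated $(3\pm\e)\frac{n}{3n-2t}$. I would make the constant precise by comparing the blue-edge excursions to excursions of a simple random walk and using that the stationary distribution on a cubic graph is uniform on configuration points.

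The main obstacle I expect is controlling the blue excursions rigorously: one must show that the subgraph of blue edges, together with the not-yet-discovered red stubs, is sufficiently expander-like (or at least that excursions between red edges are short in expectation) throughout the range $t_1 \le t \le t_4$, and crucially that this remains true even as $\delta \to 0$ and red edges become sparse — near $t_4$ there are only $\approx \log^{11} n$ red edges left, so the walk may need $\Theta(n/\log^{11}n)$ expected steps between discoveries, and one must verify the excursion-length estimate does not blow up faster than $\frac{3n}{3n-2t} = \frac{1}{\delta}$. This requires the local-limit / treelike structure of the configuration model at the relevant scale together with the concentration of $X_1, X_2, X_3$ around their trajectory values, which is presumably why the lemma is proved in two sub-ranges $t_1 \le t \le t_3$ and $t_3 \le t \le t_4$ in Section \ref{sec:covertime}.
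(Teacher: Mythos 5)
Your decomposition into a blue excursion followed by a red step is the right skeleton, and the concentration facts you cite for $X_1, X_2, X_3$ match what the paper actually uses. However, the core of the lemma is the constant $3$, and your derivation of it has a genuine gap. Your ``density'' argument conflates two different densities. If you count vertices, $|\ol X(t)| \approx 3n\d$ out of $n$ vertices, so a mixed random walk should hit $\ol X$ in expectation $\approx n/|\ol X| \approx n/(3n-2t)$ steps, which is off by a factor of $3$. If you count configuration points, the fraction of red points is $\d$, suggesting a wait of $\approx 1/\d = 3n/(3n-2t)$, which happens to be the right answer --- but that reasoning is not a valid hitting-time bound, since the walk samples vertices (not configuration points) and the ``wait until a uniform sample hits a set of density $p$'' heuristic applies to the vertex fraction, not the point fraction. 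Your write-up oscillates between these and at one point says ``multiplying by $3$'' after already getting $1/\d$, which would overshoot by a further factor of $3$.

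The actual source of the constant is a genuine hitting-time calculation that you do not perform. The paper contracts $\ol X(t)$ to a single node $s$ and applies $\Ex{\pi}{H(S)} = Z_{ss}/\pi_s$ (Lemma \ref{lem:hitting}). The crucial structural observation is that most of $\ol X$ is $X_1$, and $X_1$ vertices come in pairs joined by red edges, so after contraction $s$ carries $\approx |\ol X|/2$ self-loops. This is not a nuisance term: it raises the expected number of short-time returns to $s$ from $2+o(1)$ (the tree-like value, as in \cite{CFreg}) to $3+o(1)$ (a $1/3$ chance per step of taking a loop turns $2$ into $2 \times 3/2 = 3$). Hence $Z_{ss} \to 3$ and $\Ex{\pi}{H(\ol X)} \approx 3n/|\ol X|$, which is the content of Lemmas \ref{lem:treehitting} and \ref{lem:roothitting}. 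Establishing that $\ol X(t)$ actually looks like this (a ``root set'') is the role of the expansion lemmas (\ref{lem:expansionviaconcentration}, \ref{lem:expansionwithoutconcentration}), and \emph{that}, not the concentration estimates \emph{per se}, is why the range is split at $t_3$. Any argument for this lemma must detect the pairing/self-loop structure of $\ol X$, and a scalar density count cannot.

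Secondly, you misidentify the $O(\log n)$ term. You attribute it to short blue excursions (``the walk cannot get trapped for longer than $O(\log n)$ steps''), but the excursions are typically of length $\Theta(1/\d) = \Theta(n/(3n-2t))$, which is $\gg \log n$ throughout $t_1 \le t \le t_4$. The $O(\log n)$ in the statement is the burn-in needed for the blue walk to reach near-stationarity (using the spectral gap bound $\l \le 0.99$ from Lemma \ref{lem:Grproperties}(i), inherited by contraction), after which the hitting-time formula from $\pi$ applies. This term is innocuous only because $3n/2 - t_1 = O(\d_1 n)$, so the total contribution of $O(\log n)$ per increment is $o(n\log n)$; your account would make this accounting impossible to carry out.
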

Note that because $\frac{3n}2-t_1=O(\d_1n)$, the $O(\log n)$ term only contributes an amount $O(n\d_1\log n)=o(n\log n)$ to the the edge cover time.

Finally, the following lemma shows that the final $\log^{11}n$ edges can be found in time $o(n\log n)$.
\begin{lemma}\label{lem:lastedges}
For $t > t_4$ and $n$ large enough,
$$
\E{C(t) - C(t_4)} = o(n\log n).
$$
\end{lemma}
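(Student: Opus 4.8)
The plan is to control the tail of the process, where $t > t_4 = (1-\d_4)\frac{3n}{2}$ with $\d_4 = n^{-1}\log^{11}n$, so only the final $\tfrac32 n \d_4 = \tfrac32\log^{11}n$ edges remain to be discovered. Since there are so few edges left, the naive bound ``each new edge costs at most the worst-case hitting/cover time between consecutive discoveries'' is already good enough: we only need the total to be $o(n\log n)$, and we have a polylogarithmic number of edges to find. So the real content is a crude but valid bound on $\E{C(t+1)-C(t)}$ that holds uniformly for all $t$, even deep in the tail where the recurrence-based estimates of Lemma~\ref{lem:increment} break down (the $O(\log n)$ error there swamps the main term once $3n-2t$ is polylogarithmic, and moreover $X_3(t)$ may already be $0$).

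First I would establish a universal upper bound on the expected time to discover one more edge, valid at any point of the process while unvisited edges remain. When the walk sits at a vertex with a red edge, a new edge (or at least a new configuration point) is found immediately; the danger is a long stretch during which the walk only traverses blue edges. Here I would use the standard fact that a simple (blue) random walk on a connected $3$-regular $n$-vertex graph has cover time $O(n^2\log n)$, hence expected hitting time between any two vertices $O(n^2)$; since the red subgraph is always nonempty (until the process ends) and spans at least one vertex, the walk reaches a red edge in expected $O(n^2)$ steps. Thus $\E{C(t+1)-C(t)} = O(n^2)$ unconditionally, and even more simply $\E{C(t)-C(t_4)} \le (t - t_4)\cdot O(n^2) \le \tfrac32\d_4 n \cdot O(n^2) = O(\log^{11}n \cdot n^2)$ --- which is, alas, \emph{not} $o(n\log n)$, so this crude bound alone does not suffice and must be sharpened.

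To get the sharpening, I would argue that in the tail the red graph is \emph{large}, so red edges are encountered quickly. At time $t_4$ the number of visited edges is $t_4$, so the number of red (unvisited) edges is $\tfrac32 n - t_4 = \tfrac32\d_4 n = \tfrac32\log^{11}n$, and as the process continues from $t_4$ to $t$ this only decreases, staying at least... no --- that is small, not large. The correct picture is the reverse: few red edges remain, but they are \emph{spread out}, and more importantly the \emph{blue} graph is now essentially all of $G$, a random cubic graph, on which the blue walk mixes in $O(\log n)$ steps and has hitting times $O(n)$. So from any vertex the walk reaches the (nonempty) set of red-incident vertices in expected $O(n)$ steps — unless that set is pathologically small, which w.h.p.\ it is not because of structural properties of $G$ proved (or assumable) earlier, e.g.\ that $G$ has no short cycles and good expansion so that the red edges left at time $t_4$ cannot all be clustered. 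Granting $\E{C(t+1)-C(t)} = O(n)$ throughout the tail, we get $\E{C(t)-C(t_4)} \le \tfrac32\d_4 n\cdot O(n) = O(\log^{11}n\cdot n) = o(n\log n)$, as required.

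The main obstacle is making rigorous the claim that the blue walk reaches a red edge in expected $O(n)$ (rather than $O(n^2)$) steps at \emph{every} point of the tail, since the location of the remaining red edges is itself random and correlated with the walk's history. I would handle this by not insisting on reaching a \emph{red} edge directly, but instead bounding the expected time for the blue walk to reach any \emph{fixed} target vertex by $O(n)$ using the eigenvalue gap / conductance of the random cubic graph $G$ (equivalently, bounding $\sum_x \pi(x)\, \E{T_x}$ and using that hitting times on an expander from stationarity are $O(n)$), then taking a union/averaging over the at most $\tfrac32\log^{11}n$ vertices incident to red edges to conclude that some red edge is hit in expected $O(n\log^{11}n)$ steps --- giving $\E{C(t)-C(t_4)} = O(\log^{22}n\cdot n) $, still $o(n\log n)$. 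Even this lossy argument suffices, so I would aim for the cleanest version: bound the per-edge cost by $O(n\cdot\mathrm{polylog}\,n)$ via expander hitting times, multiply by the $O(\mathrm{polylog}\,n)$ remaining edges, and check the product is $o(n\log n)$. One should also dispatch the trivial subcase where the walk is at a red-incident vertex (cost $1$) and confirm the process indeed terminates (the red graph strictly shrinks on each red traversal, and connectivity of $G$ guarantees all edges are eventually visited).
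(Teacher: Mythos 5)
Your proposal contains a fatal asymptotic error that it commits twice. You write that a per-step bound of $O(n)$ times the $\tfrac32\d_4 n = \tfrac32\log^{11}n$ remaining steps gives $O(n\log^{11}n) = o(n\log n)$, and later that a per-step bound of $O(n\log^{11}n)$ times $O(\log^{11}n)$ steps gives $O(n\log^{22}n) = o(n\log n)$. Neither of these is $o(n\log n)$: indeed $n\log^{11}n / (n\log n) = \log^{10}n \to\infty$. Any argument of the form ``uniform bound $O(n\cdot\mathrm{polylog})$ per step, times $\mathrm{polylog}$ steps'' is doomed here, because $n\cdot\mathrm{polylog}(n)$ already exceeds $n\log n$ unless the polylog is $o(\log n)$.

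The idea you are missing --- and which the paper uses --- is that the per-step hitting time should be bounded \emph{in terms of the size of the target set}, not uniformly. Contracting $\ol X(t)=X_1(t)\cup X_2(t)\cup X_3(t)$ (which has size exactly $3n-2t$) into a single node and applying Lemma~\ref{lem:hitting} together with the eigenvalue bound \eqref{eq:js} gives
$$
\E{C(t+1)-C(t)} \leq O(\log n) + \frac{1}{1-\l}\cdot\frac{n}{3n-2t},
$$
where the $O(\log n)$ pays for mixing to near-stationarity. The crucial feature is the $1/(3n-2t)$ factor: when there are still many red endpoints the hitting time is small, and it only approaches $O(n)$ at the very last step. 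Summing gives a harmonic-type sum
$$
\sum_{t=t_4}^{3n/2-1}\frac{n}{3n-2t} = O\bigl(n\log\log n\bigr),
$$
plus $O(\log^{11}n\cdot\log n)$ from the mixing terms, both of which are $o(n\log n)$. Note also that your ``union/averaging over the $\tfrac32\log^{11}n$ red-incident vertices'' goes in the wrong direction: the expected time to hit the \emph{union} of $k$ targets from stationarity on an expander is $O(n/k)$, not $O(nk)$ --- having more targets makes hitting easier, not harder. That reversal is precisely the $1/(3n-2t)$ scaling your argument lacks.
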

We note now that Lemma \ref{lem:Cstar} follows from Lemmas \ref{lem:phaseone}, \ref{lem:increment} and \ref{lem:lastedges}.
\section{Structural properties of random cubic graphs}
Here we collect some properties of random cubic graphs.
\begin{lemma}\label{lem:Grproperties}
Let $G$ denote the random cubic graph on vertex set $[n]$, chosen according to the configuration model. Let $\om$ tend to infinity arbitrarily slowly with $n$. Its value will always be small enough so that where necessary, it is dominated by other quantities that also go to infinity with $n$. Then with high probability,
\begin{enumerate}[(i)]
\item The second largest in absolute value of the eigenvalues of the transition matrix for a simple random walk on $G$ is at most $2\sqrt{2}/3 + \e\leq .99$ for some $\e$.
\item $G$ contains at most $\om3^\om$ cycles of length at most $\om$,
\item The probability that $G$ is simple is $\OM(1)$.
\end{enumerate}
\end{lemma}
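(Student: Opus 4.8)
The plan is to derive (ii) and (iii) from standard moment computations in the configuration model and to obtain (i) by quoting Friedman's second eigenvalue theorem (Alon's conjecture); no new idea is needed beyond assembling known ingredients. For (ii) I would compute the expected number of short cycles directly from the pairing $\m$. A cycle of length $\ell$ is specified by a cyclic sequence of $\ell$ distinct vertices together with, at each of them, a choice of which two of its three configuration points lie on the cycle and in which orientation; the number of such data is at most $n(n-1)\cdots(n-\ell+1)\cdot 6^{\ell}/(2\ell)$, and the probability that $\ell$ prescribed pairs of points are all matched by $\m$ is $\prod_{j=0}^{\ell-1}(3n-2j-1)^{-1}$. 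Since $\om = o(\sqrt n)$, multiplying these bounds gives, uniformly over $1\le\ell\le\om$, an expected number of $\ell$-cycles at most $2^{\ell}$, hence an expected number of cycles of length at most $\om$ below $2^{\om+1}$. Markov's inequality then yields $\Prob{G\text{ has more than }\om 3^{\om}\text{ cycles of length}\le\om}\le 2\om^{-1}(2/3)^{\om}=o(1)$, which is (ii).

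For (iii), the same computation specialised to $\ell=1$ gives a number $L$ of loops with $\E{L}\to 1$, the analogous count for a pair of parallel edges gives a number $D$ of such pairs with $\E{D}\to 1$, and the expected number of triple edges is $O(1/n)\to 0$. By the method of moments---this is Bollob\'as's classical analysis of the configuration model, see also the surveys of Wormald and of Janson--\L uczak--Ruci\'nski---the pair $(L,D)$ converges jointly to independent Poisson random variables with those means, so $\Prob{G\text{ is simple}}=\Prob{L=D=0}+o(1)\to e^{-1}e^{-1}=e^{-2}$, which is $\OM(1)$.

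For (i), the transition matrix of a simple random walk on a cubic graph is $A/3$, where $A$ is the adjacency matrix, so the eigenvalue to be bounded is $\widehat\lambda/3$ with $\widehat\lambda$ the largest absolute value of a non-Perron eigenvalue of $A$. I would quote Friedman's second eigenvalue theorem in a form valid for the configuration/pairing model---the natural setting for much of that line of work, from Broder--Shamir and Friedman--Kahn--Szemer\'edi through Friedman and, more recently, Bordenave---which gives that for every fixed $\e'>0$, with high probability $\widehat\lambda\le 2\sqrt2+\e'$. Since $2\sqrt2/3=0.9428\ldots$, dividing by $3$ and taking $\e'$ small enough yields $2\sqrt2/3+\e\le 0.99$, as claimed. (A transfer from the uniform random \emph{simple} cubic graph would not by itself suffice, since only a high-probability bound is claimed while $\Prob{G\text{ simple}}$ tends to $e^{-2}$ rather than to $1$; the useful implication runs from the pairing model to the simple model, not conversely, so one should quote a pairing-model version directly.)

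The one genuinely nontrivial ingredient is (i): it rests on Friedman's second eigenvalue theorem, a deep result, and the sharp constant really is needed here---(i) asks for $\widehat\lambda/3\le 0.99$, i.e.\ $\widehat\lambda\le 2.97$, only marginally below the trivial bound $\widehat\lambda\le 3$, so a crude $O(1)$ trace-method estimate does not obviously help. The only care required beyond invoking the theorem is to pick a version that applies to the random cubic multigraph produced by the configuration model. Parts (ii) and (iii) are the routine first-moment and method-of-moments computations that are by now standard for the configuration model and present no real difficulty.
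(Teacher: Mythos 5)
Your proposal matches the paper's approach essentially exactly: the paper also disposes of (i) by citing Friedman's second-eigenvalue theorem, of (ii) by a first-moment bound on short cycles (the paper uses the looser bound $O(3^k)$ where you obtain roughly $2^\ell$, but Markov works either way), and of (iii) by citing the standard configuration-model simplicity result (Frieze--Karo\'nski, Theorem~10.3), which is the Bollob\'as/method-of-moments computation you reproduce. Your cautionary remark about needing a pairing-model version of Friedman's theorem, rather than transferring from the uniform simple graph, is well taken and correct; the paper simply cites Friedman without comment, implicitly relying on the fact that his result is proved for models equivalent to the configuration model.
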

For the proof of (i) see Friedman \cite{JF} and for the proof of (iii) see Frieze and Karo\'nski \cite{FK}, Theorem 10.3. Property (ii) follows from the Markov inequality, given that the expected number of cycles of length $k\leq \om$ can be bounded by $O(3^k)$.

Let $G(t)$ denote the random graph formed by the edges visited by $W(t)$. Let $X_i(t)$ denote the set of vertices incident to $i$ red edges in $G(t)$ for $i=0,1,2,3$. Let $\ol X(t) = X_1(t)\cup X_2(t)\cup X_3(t)$. Let $G^*(t)$ denote the graph obtained from $G(t)$ by contracting the set $\ol X(t)$ into a single vertex, retaining all edges. Define $\l^*(t)$ to be the second largest eigenvalue of the transition matrix for a simple random walk on $G^*(t)$.

We note that \cite[Corollary 3.27]{af}, if $\G$ is a graph obtained from $G$ by contracting a set of vertices, retaining all edges, then $\l(\G) \leq \l(G)$. This implies that $\l^*(t) = \l(G^*(t)) \leq \l(G) \leq 0.99$ for all $t$. Initially, for small $t$, we find that w.h.p. $G^*(t)$ consists of a single vertex. In this case there is no second eigenvalue and we take $\l^*(t)=0$. This is in line with the fact that a random walk on a one vertex graph is always in the steady state.

\section{Random walks and hitting times}\label{sec:hitting}
We are interested in calculating $C(t+1) - C(t)$, i.e. the time taken between discovering the $t$th and the $(t+1)$th edge. Between the two discoveries, the biased random walk can be coupled to a simple random walk on the graph induced by $W(t)$ which ends as soon as it hits a vertex of $\ol X$. In this section we derive the hitting time of a certain type of expanding vertex set.

Consider a simple random walk on a cubic graph $G = (V, E)$ with eigenvalue gap $1 - \l > 0$. For a set $S$ of vertices and a probability measure $\r$ on $V$, let $\Ex{\r}{H(S)}$ denote the expected hitting time of the set $S$, when the initial vertex is chosen according to $\r$. Let $\pi$ denote the stationary distribution of the random walk, uniform in the case of a regular graph and proportional to degrees in general. Let $P_u^{(t)}(v)$ denote the probability that a simple random walk starting at $u$ occupies vertex $v$ at step $t$ of the walk.

\begin{lemma}\label{lem:hitting}
Suppose $v$ is a vertex of a graph. Then the hitting time of $v$, starting from the stationary distribution $\pi$, is given by
$$
\Ex{\pi}{H(v)} = \frac{Z_{vv}}{\pi_v}
$$
where
$$
Z_{vv} = \sum_{t\geq 0} (P_v^{(t)}(v) - \pi_v).
$$
\end{lemma}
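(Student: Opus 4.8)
The plan is to establish the identity $\Ex{\pi}{H(v)} = Z_{vv}/\pi_v$ via the standard fundamental-matrix argument for finite Markov chains, specialized to the hitting time of a single state. First I would recall the setup: let $P$ denote the transition matrix of the simple random walk, $\pi$ its stationary distribution, and $Z = \sum_{t\ge 0}(P^t - \mathbf{1}\pi^T)$ the fundamental matrix, whose $(u,v)$ entry $Z_{uv} = \sum_{t\ge 0}(P_u^{(t)}(v) - \pi_v)$ converges because the eigenvalue gap $1 - \l > 0$ forces $P_u^{(t)}(v) \to \pi_v$ geometrically (so the series is absolutely convergent and $Z$ is well-defined).

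The core of the argument is the classical formula relating mean hitting times to the fundamental matrix, namely $\Ex{u}{H(v)} = (Z_{vv} - Z_{uv})/\pi_v$ for $u \neq v$ (and $\Ex{v}{H(v)} = 0$ interpreted as the return-time convention). I would derive this by a first-step / renewal argument: writing $H_{uv} = \Ex{u}{H(v)}$, one has the system $H_{uv} = 1 + \sum_{w \ne v} P_{uw} H_{wv}$, and one checks that $(Z_{vv} - Z_{uv})/\pi_v$ solves it, using the defining relation $Z(I - P) = I - \mathbf{1}\pi^T$ together with $Z\mathbf{1} = 0$ and $\pi^T Z = 0$. Then averaging over the starting vertex $u$ with weights $\pi_u$ gives
\[
\Ex{\pi}{H(v)} = \sum_u \pi_u \cdot \frac{Z_{vv} - Z_{uv}}{\pi_v} = \frac{Z_{vv}}{\pi_v} - \frac{1}{\pi_v}\sum_u \pi_u Z_{uv} = \frac{Z_{vv}}{\pi_v},
\]
since $\sum_u \pi_u Z_{uv} = (\pi^T Z)_v = 0$. (The term $u = v$ contributes $0$ to the sum and is consistent since $Z_{vv} - Z_{vv} = 0$.)

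The main obstacle — or at least the only point requiring care — is justifying the convergence and the algebraic manipulations with the infinite sum defining $Z$: one must confirm that $\sum_t (P^t - \mathbf{1}\pi^T)$ converges entrywise (immediate from the spectral gap, noting that on the orthogonal complement of the stationary direction the spectral radius of $P$ is $\l < 1$, though for a general reversible chain one works with the appropriate inner product), and that telescoping $Z(I-P) = \sum_t (P^t - P^{t+1})$ after subtracting $\mathbf{1}\pi^T$ yields $I - \mathbf{1}\pi^T$. These are standard facts (see e.g. Aldous–Fill \cite{af}), so I would state them, give the one-line telescoping check, and cite the reference rather than belabor the details; the hitting-time identity itself is then a short linear-algebra computation as above. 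I would also remark that since $G$ here is regular, $\pi_v = 1/|V|$ is uniform, so the formula specializes to $\Ex{\pi}{H(v)} = |V| \cdot Z_{vv}$, which is the form actually used in the sequel.
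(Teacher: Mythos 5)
Your proof is correct. The paper does not actually supply a proof of Lemma \ref{lem:hitting}; it simply cites Aldous--Fill \cite{af} (Lemma 2.11), and the fundamental-matrix argument you give --- defining $Z$, solving the first-step equations to get $\Ex{u}{H(v)} = (Z_{vv}-Z_{uv})/\pi_v$, and averaging against $\pi$ using $\pi^T Z = 0$ --- is precisely the argument in that reference, so you have essentially reproduced the paper's (outsourced) proof.
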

Lemma \ref{lem:hitting} can be found in \cite{af} (Lemma 2.11), and can be applied to hitting times of sets by contracting a set of vertices to a single vertex. The following bound will be frequently used. Suppose $G$ is a graph with eigenvalue gap $1-\l(G)$, and $S$ is a set of vertices in $G$. Then if $G_S$ is the graph obtained by contracting $S$ into a single vertex, retaining all edges, we have equal hitting times for $S$ in $G$ and $G_S$ and
\begin{equation}\label{eq:js}
\Ex{\pi}{H(S)} = \frac{n}{|S|} \sum_{t\geq 0} \left(P_{S}^{(t)}(S) - \pi_S\right) \leq \frac{n}{|S|}\sum_{t\geq 0} \l(G_S)^t = \frac{1}{1-\l(G_S)} \frac{n}{|S|} \leq\frac{1}{1-\l(G)}\frac{n}{|S|}.
\end{equation}
Indeed, $|P_v^{(t)}(v) -\pi_v| \leq \l^t$ for any $v,t$ in a graph with eigenvalue gap $1-\l$, see for example Jerrum and Sinclair \cite{JS} and use $j=k$ in the middle of the proof there of Proposition 3.1. Also, $\l(\G) \leq \l(G)$ for any $\G$ obtained from $G$ by contracting a set of vertices (see \cite[Corollary 3.27]{af}).

In the following lemma we implicitly view $G$ as a member of a sequence of graphs $(G_n)$, and $G$ having positive eigenvalue gap means that the second largest eigenvalue $\l_n$ of $G_n$ satisfies $\lim\sup \l_n < 1$.

Define $N_d(S)$ to be the set of vertices at distance exactly $d$ from a vertex set $S$. The set $S$ we consider induces $|S|/2$ edges all of which are 'far apart'.

\begin{lemma}\label{lem:treehitting}
 Let $G$ be a cubic graph on $n$ vertices with positive eigenvalue gap.
 Suppose $S$ is a set of vertices with $|S|\ge 2$ even such that $|S|=o(n)$ and
$$
|N_d(S)| = 2^d|S|
$$
for all $1\leq d\leq \om$, where $\om$ tend to infinity arbitrarily slowly with $n$. Then
$$
\Ex{\pi}{H(S)} \approx \frac{3n}{|S|}.
$$
\end{lemma}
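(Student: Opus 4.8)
The plan is to estimate $\Ex{\pi}{H(S)}$ via the formula $\Ex{\pi}{H(S)} = Z_{SS}/\pi_S$ from Lemma~\ref{lem:hitting} applied to the graph $G_S$ obtained by contracting $S$ to a single vertex $s$. Here $\pi_S = |S|/n$, so it suffices to show $Z_{ss} := \sum_{t\ge 0}(P_s^{(t)}(s) - \pi_s) \approx 3$. The upper bound $Z_{ss} \le 1/(1-\l(G))$ is immediate from \eqref{eq:js}, but this is a constant $> 3$, not the sharp value, so the real work is identifying the constant exactly. The idea is that a walk started at $s$ behaves, for the first $\om$ steps, exactly like a walk on an infinite $3$-regular tree rooted at $s$ (this is where the expansion hypothesis $|N_d(S)| = 2^d|S|$ enters: the union of edges induced by $S$ together with their $\om$-neighbourhoods looks locally like $|S|/2$ disjoint copies of the ball of radius $\om$ in the $3$-regular tree, with $s$ playing the role of the common root), while for $t$ beyond $\om$ the contribution to $Z_{ss}$ is negligible because the chain has mixed: $|P_s^{(t)}(s) - \pi_s| \le \l^t$ and $\sum_{t \ge \om}\l^t = \l^\om/(1-\l) = o(1)$.

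So the key computation is: the return generating quantity $\sum_{t\ge 0}(p_t - \pi_s)$ where $p_t$ is the probability that a simple random walk on the $3$-regular tree, started at the root, is back at the root after $t$ steps. Concretely I would split $Z_{ss} = \sum_{t=0}^{\om}(p_t^{(G_S)} - \pi_s) + \sum_{t > \om}(\cdots)$, bound the tail by $O(\l^\om + \om\pi_s) = o(1)$, and replace $p_t^{(G_S)}$ by the tree return probability $p_t$ for $t \le \om$ at cost $o(1)$ (the walk cannot in $\le \om$ steps distinguish $G_S$ near $s$ from the tree, given the neighbourhood-size hypothesis; one has to be slightly careful that the $|S|/2$ local tree-pieces are genuinely disjoint out to depth $\om$, which follows since $|N_d(S)| = 2^d|S|$ forces no short cycles and no short connections among the edges of $S$). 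Then $Z_{ss} = \sum_{t\ge 0} p_t - \sum_{t\ge 0}\pi_s + o(1)$; since $\pi_s\sum_{t=0}^{\om} 1 = o(1)$ this is just $\sum_{t\ge 0} p_t + o(1)$, and I claim $\sum_{t \ge 0} p_t = 3$. This last identity is the combinatorial heart: for the $3$-regular tree the expected number of returns to the root equals $\frac{d-1}{d-2} = 2$ extra returns beyond the $t=0$ term, giving $1 + 2 = 3$. (Equivalently, from a root neighbour the walk returns to the root with probability $1/2$, so the number of visits to the root is geometric; one can also extract $\sum_t p_t$ from the known spectral measure $d\mu(x) = \frac{3}{2\pi}\frac{\sqrt{8/9 - x^2}}{1-x^2}\,dx$ on $[-2\sqrt2/3, 2\sqrt2/3]$ via $\sum_t p_t = \int \frac{1}{1-x}d\mu(x)$, which evaluates to $3$.) Hence $Z_{ss} \to 3$ and $\Ex{\pi}{H(S)} = Z_{ss}/\pi_S \approx 3n/|S|$.

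I expect the main obstacle to be the careful justification that the first $\om$ return probabilities on $G_S$ agree with those on the disjoint union of tree-balls: one must verify that contracting $S$ does not create short cycles through $s$ beyond those already accounted for, i.e. that the hypothesis $|N_d(S)| = 2^d|S|$ for $d \le \om$ really does imply the radius-$\om$ ball around $s$ in $G_S$ is a tree (a ``bouquet'' of $|S|/2$ paths from $s$, each the root-edge of a $3$-regular tree-ball). A secondary nuisance is handling the normalization cleanly when $|S|$ is only $o(n)$ rather than $O(1)$, and making the $o(1)$ error terms in the $\approx$ uniform; but these are routine given the spectral gap bound \eqref{eq:js} and the tail estimate above. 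The identity $\sum_t p_t = 3$ itself I would either cite from standard references on random walks on regular trees or prove in a line via the geometric-visits argument.
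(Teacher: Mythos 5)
Your overall architecture matches the paper's: contract $S$ to a node $s$, write $\Ex{\pi}{H(S)} = Z_{ss}/\pi_s$, split $Z_{ss}$ at time $\om$, kill the tail with $\sum_{t>\om}\l^t=o(1)$ plus an $\om\pi_s=o(1)$ correction, and read off the constant from the first $\om$ return probabilities. The gap is in that last, decisive step. The hypothesis $|N(S)|=2|S|$, $|N_2(S)|=4|S|$ forces $S$ to induce exactly $|S|/2$ edges, and contracting $S$ turns these into $|S|/2$ self-loops at $s$: the node $s$ has degree $3|S|$, of which $|S|$ half-edges belong to loops and $2|S|$ lead to $N(S)$. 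Your local model --- $s$ as the common root of plain $3$-regular tree balls, with no loops anywhere --- omits this, and the loops are precisely where the constant $3$ comes from.

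Compounding this, your identity $\sum_{t\ge 0}p_t=3$ for the rooted $3$-regular tree is false. From a neighbour of the root the probability of ever returning to the root is $1/2$, so the number of visits to the root \emph{including} $t=0$ is geometric with mean $1/(1-\tfrac12)=2$; the quantity $(d-1)/(d-2)=2$ is already this total visit count, not the number of ``extra returns beyond the $t=0$ term'', so your $1+2=3$ double-counts the initial visit (and the Green's function $\int(1-x)^{-1}\,d\mu(x)$ likewise evaluates to $2$, not $3$). The correct computation on $G_S$ is: at $s$ the one-step return probability is $\tfrac13\cdot 1+\tfrac23\cdot\tfrac12=\tfrac23$ (probability $1/3$ of taking a loop, else escape to $N(S)$ and return with probability $1/2$ as in the tree), giving $\sum_{t\le\om}P_s^{(t)}(s)=1/(1-\tfrac23)+o(1)=3+o(1)$; the paper phrases this as each visit yielding $3/2$ expected returns, inflating the loopless count of $2$ (cited from \cite{CFreg}) to $3$. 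Your two errors cancel to land on the right number, but as written the proof of the key identity does not hold.
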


\begin{proof}
We first note that the set $S$ contains exactly $|S|/2$ edges. Indeed, as $|N(S)| = 2|S|$ and the total degree of $S$ is $3|S|$, $S$ contains at most $|S|/2$ edges. As $|N_2(S)| = 4|S|$, each vertex of $N(S)$ must have exactly one edge to $S$, implying that $S$ contains at least $|S|/2$ edges.

Consider the graph $G_S$ obtained by contracting $S$ into a single node $s$, retaining all edges. In the graph $G_S$, $s$ has degree $3|S|$.  Then $s$ is a node with exactly $|S|/2$ self-loops, and is otherwise contained in no cycle of length at most $\om$, as $|N_d(S)| = 2^d|S|$ ensures that $G_S$ is locally a tree up to distance $\om$ from $s$. Since $\pi_s = |S| /n = o(1)$ we may choose $\om$ tending to infinity with $\om\pi_s = o(1)$. We have
$$
Z_{ss} = \sum_{t\geq 0} (P_s^{(t)}(s) - \pi_s) = \brac{\sum_{t =0}^\om P_s^{(t)}(s)} - o(1) + \sum_{t > \om} (P_s^{(t)}(s) - \pi_s).
$$
Repeating the argument following \eqref{eq:js},
$$
\sum_{t > \om} |P_s^{(t)}(s) - \pi_s| \leq \sum_{t>\om} \l^t = O(\l^\om) = o(1).
$$
We now argue that
$$
\sum_{t = 0}^\om P_s^{(t)}(s) = 3 + o(1).
$$
It is argued in Cooper and Frieze \cite{CFreg}, Lemma 7, that with no loops at vertex $s$, the expected number of returns to $s$ within $\om$ steps is $2+o(1)$. With the loops, when at $s$, there is a 1/3 chance of using a loop and so each visit to $s$ yields 3/2 expected returns; i.e. the 2 of \cite{CFreg} becomes $3=2\times 3/2$.
\end{proof}

We now expand Lemma \ref{lem:treehitting} to a larger class of sets.
\begin{definition}\label{def:rootset}
Let $G = (V, E)$ be a cubic graph. A set $S\subseteq V$ is a {\em root set of order $\ell$} if (i) $|S| \geq \ell^5$, (ii) the number of edges with both endpoints in $S$ is between $|S|/2$ and $(1/2 + \ell^{-3})|S|$, and (iii) there are at most $|S| /\ell^3$ paths of length at most $\ell$ between vertices of $S$ that contain no edges between a pair of vertices in $S$.
\end{definition}
Root sets of large order may be thought of as sets that ``almost satisfy the hypothesis of Lemma \ref{lem:treehitting}''. The following lemma shows  this definition is suitable for our purposes.
\begin{lemma}\label{lem:roothitting}
Let $\om$ tend to infinity arbitrarily slowly with $n$. Suppose $G$ is a cubic graph on $n$ vertices with positive eigenvalue gap, containing at most $\om^2$ cycles of length at most $\om$. If $S$ is a root set of order $\om$, then
$$
\Ex{\pi}{H(S)} \approx \frac{3n}{|S|}.
$$
\end{lemma}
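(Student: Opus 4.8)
The plan is to reduce Lemma \ref{lem:roothitting} to Lemma \ref{lem:treehitting} by showing that a root set $S$ of order $\om$ can, after deleting a small fraction of its vertices, be ``cleaned up'' into a set $S'$ satisfying (or essentially satisfying) the tree-neighbourhood hypothesis $|N_d(S')| = 2^d|S'|$ for all $d \le \om'$ for some $\om' \to \infty$. The key observation is that the three defining properties of a root set are exactly what prevent the local structure around $S$ from being a tree: property (ii) controls the number of ``extra'' internal edges (beyond the perfect matching of $|S|/2$ edges forced in the idealized case), and property (iii) controls the number of short paths between $S$-vertices that would create short cycles once $S$ is contracted, together with the ambient bound of $\om^2$ short cycles in $G$ itself.

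First I would quantify the obstructions. In the contracted graph $G_S$, the vertex $s$ fails to be ``loop plus tree up to depth $\om$'' for one of three reasons: there is a cycle of length $\le \om$ through $s$ coming from a short path in $G$ between two distinct $S$-vertices (counted by (iii), at most $|S|/\om^3$ of these), there is a cycle not through $s$ but within depth $\om$ of $s$ (counted by the global bound $\om^2$), or there is an ``extra'' internal edge making the local matching structure imperfect (counted by (ii), at most $|S|/\om^3$ such edges). In each case I would mark the $O(|S|/\om^3) + O(\om^2) = o(|S|)$ vertices of $S$ that are ``involved'' — i.e. lie on or near such a defect — and remove them to form $S' = S \setminus (\text{bad vertices})$. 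Because we only removed $o(|S|)$ vertices, $|S'| = (1-o(1))|S|$, and with a little care one arranges that $S'$ still has even size and that the removal does not create new defects. Then $S'$ satisfies $|N_d(S')| = 2^d |S'|$ for $1 \le d \le \om'$ with $\om' = \om/2$ say, so Lemma \ref{lem:treehitting} gives $\Ex{\pi}{H(S')} \approx 3n/|S'| \approx 3n/|S|$.

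To finish, I would transfer the hitting-time estimate from $S'$ back to $S$. The upper bound is free: $S' \subseteq S$ implies $H(S) \le H(S')$ pointwise, hence $\Ex{\pi}{H(S)} \le \Ex{\pi}{H(S')} \approx 3n/|S|$. For the lower bound, $\Ex{\pi}{H(S)} \ge \Ex{\pi}{H(S')} - (\text{correction})$ is the wrong direction, so instead I would use the general bound \eqref{eq:js} applied to $S$ itself, $\Ex{\pi}{H(S)} \ge \frac{1}{1+\l}\cdot\frac{n}{|S|}$ or more precisely revisit the computation $\Ex{\pi}{H(S)} = \frac{n}{|S|}\sum_{t\ge0}(P_S^{(t)}(S) - \pi_S)$ and show $\sum_{t\ge0}(P_S^{(t)}(S)-\pi_S) = 3 + o(1)$ directly: the tail $t > \om$ contributes $o(1)$ by the spectral gap exactly as before, and for $t \le \om$ the expected number of returns to the contracted node is $3 + o(1)$ because only an $o(1)$-fraction of walk trajectories of length $\le \om$ from a uniformly random point of $S$ encounter one of the defects (here the bound $|S| \ge \om^5$ from (i) is what makes the relative error genuinely $o(1)$ after dividing the $O(\om^2)+O(|S|/\om^3)$ defects by $|S|$ and multiplying by the $O(\om)$-step reach of the walk).

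The main obstacle I expect is the bookkeeping in the ``cleaning'' step: making precise what it means for a vertex of $S$ to be ``involved'' in a defect, showing that the set of such vertices is genuinely $o(|S|)$, and — more delicately — verifying that after deletion the set $S'$ really does have the exact tree-neighbourhood property rather than merely an approximate one (one may need a second cleaning pass, or to absorb a residual $o(|S|)$ error directly into the return-count estimate rather than insisting on an exactly tree-like $S'$). A cleaner route, which is probably what the authors take, is to skip constructing $S'$ altogether and instead directly estimate $\sum_{t=0}^{\om} P_S^{(t)}(S)$ for $S$ a root set: decompose the return probability into the contribution from ``good'' trajectories (which see only the $|S|/2$-ish self-loops-after-contraction and behave as in Lemma \ref{lem:treehitting}, giving $3$) plus the contribution from trajectories that hit one of the $o(|S|/\om^2)$-per-unit-mass defects within $\om$ steps (giving $o(1)$). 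That turns the whole lemma into one controlled perturbation of the proof of Lemma \ref{lem:treehitting}, with properties (i)–(iii) of Definition \ref{def:rootset} calibrated precisely so that the perturbation is $o(1)$.
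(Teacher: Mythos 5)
Your concluding ``cleaner route'' is the paper's proof: contract $S$ to a node $s$, mark the $O(|S|/\om^3)$ excess self-loops and short cycles through $s$ together with the $\om^2$ ambient short cycles as defects, observe that a walk of length at most $\om$ started at $s$ touches a bad edge with probability $O(\om^{-2})$ (there is an $O(\om^{-3})$ chance per visit to $s$ and no bad choices are available from inside the tree components), and conclude that $\sum_{\tau \le \om} P_s^{(\tau)}(s) = 3 + O(\om^{-1})$; the tail $\tau > \om$ is $o(1)$ by the spectral gap exactly as in Lemma \ref{lem:treehitting}. Your primary plan, pruning $S$ down to a subset $S'$ satisfying $|N_d(S')| = 2^d|S'|$ exactly, has both defects you flag and neither is easy to repair: deleting a vertex of $S$ orphans its matched partner, which now has an external edge pointing into the deleted set, so the exact expansion condition does not survive naive removal and it is not clear the cascading deletions terminate before $\Omega(|S|)$ vertices are gone; and since the monotonicity $H(S) \le H(S')$ gives only the upper bound, you would have to do the direct return-count estimate for the lower bound anyway, at which point the pruning buys nothing. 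Definition \ref{def:rootset} is calibrated precisely so that the \emph{proof} of Lemma \ref{lem:treehitting} can be perturbed in place rather than the \emph{statement} invoked on a cleaned-up set.
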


\begin{proof}
Consider the contracted graph $G_S$, and let $s$ denote the contracted node. Then $s$ has degree $3|S|$, and $s$ has at most $(1/2 +2\om^{-3})|S|$ self-loops. Apart from the self-loops, $s$ lies on at most $|S|/\om^3$ cycles of length at most $\om$, as any cycle of $G_S$ containing $s$ corresponds to a path between members of $S$ in $G$.

Let $R = N(S)$, and note that $|R| = \OM(|S|)$. Consider the graph $\G$, defined as $G_S$ induced on the set of vertices at distance $1,2,\dots,\om$ from $s$. Note that $s$ is not included in $\G$. The graph $\G$ contains all of $R$, and as $s$ lies on at most $|S|/\om^3$ short cycles in $G_S$, the number of components in $\G$ containing more than one member of $R$ is $O(|S|/\om^3) = O(|R|/\om^3)$. As $G$ contains at most $\om^2$ short cycles, the number of components of $\G$ containing a cycle is at most $\om^2 = O(|R| / \om^3)$. This leaves $(1-o(1))|R|$ connected components in $\G$ which are all complete binary trees of height $\om$, each rooted at a member of $R$ and containing no other member of $R$. Let $T$ denote the set of vertices on such components.

Arbitrarily choose $|S|/2$ of the self-loops of $s$ in $G_S$, and designate them as {\em good}. Also say that an edge is good if it has both endpoints in $T\cup \{s\}$. All other edges are {\em bad}.

Consider a simple random walk $Z(\t)$ of length $\om$ on $G_S$, starting at $s$. Let $\BB_\t$ denote the event that $Z(\t)$ traverses a bad edge to reach $Z(\t+1)$. Whenever the walk visits $s$, the probability that it chooses a bad edge is $O(\om^{-3})$. If the walk is inside $T$, there are no bad edges to choose. So for any $\t \geq 0$ we have
$$
P_s^{(\t)}(s) = \Prob{Z(\t) = s\cap\ \bigcap_{r=0}^{\t-1} \overline{\BB_r}} + \Prob{Z(\t) = s \cap \bigcup_{r=0}^{\t-1} \BB_r} =
\Prob{Z(\t) = s\cap\ \bigcap_{r=0}^{\t-1} \overline{\BB_r}} + O(\om^{-2}).
$$
If $\BB_r$ does not occur for any $r \leq \t-1$, then the walk $(Z(0),\dots,Z(\t-1))$ can be viewed as the same Markov chain as considered in Lemma \ref{lem:treehitting}. So,
$$
\sum_{\t=0}^\om P_s^{(\t)}(s) = 3 + O(\om^{-1}).
$$
\end{proof}
We will argue that $X_1$ quickly makes up all but a $o(1)$ fraction of the vertices of $\ol X$.
For the purposes of discussion, we regard all edges of $G(t)$ visited exactly once by the walk $W(t)$ as coloured green.
In Section \ref{sec:uniform} we argue that the vertices of $X_1$ are, in a sense that will be made precise, uniformly distributed on those edges of $G(t)$ which are visited exactly once by $W(t)$.
In Section \ref{sec:concentration} we prove that toward the end of the walk, the number of green edges is significantly larger than $X_1$, which implies that most vertices of $X_1$ will be separated by a large distance. As $G$ contains few short cycles and $X_1$ makes up most of $\ol X$, this will imply that the number of vertices at distance $k$ from $\ol X$ is about $2^k|\ol X|$. In Section \ref{sec:hitting} we show how this implies that the expected hitting time of $\ol X$ in the simple random walk is approximately $3n/|\ol X|$, which by concentration (Lemma \ref{lem:concentration}) is about $3n/(3n-2t)$. Thus the partial edge cover time $C(t)$, the time to visit $t$ edges, will be given by
$$
\E{C(t)} = \sum_{r = 0}^{t-1} \E{C(r+1) - C(r)} \approx \sum_{r=0}^{t-1} \frac{3n}{3n-2r} \approx \frac32 n\log\bfrac{3n}{3n-2t}.
$$
In Section \ref{sec:uniform} we show how the vertices of $X_1$ are distributed within green edges. In Section \ref{sec:covertime} we use these results to calculate $\E{C(t+1) - C(t)}$ assuming certain concentration results, which are then proved in Section \ref{sec:concentration}.

\section{Random distribution of once-visited vertices}\label{sec:uniform}

Eventually the biased random walk will spend the majority of its time at vertices in $X_0$, i.e. vertices with no red incident edges. To bound the cover time, we will bound the time taken to hit $X_1\cup X_2$, which may be thought of as the boundary of $X_0$.

Let $W_k,k\geq 0$ denote the biased random walk after $2k+1$ walk steps have been taken. Say that a fixed finite walk $W$ is {\em feasible} if $\Prob{W_k = W} > 0$ for some $k \geq 0$, and fix a feasible walk $W$. Let $t$ be the time associated with $W$ as indicated in \eqref{tk}. Let $Y$ denote the subset of vertices in $X_1(t)$ that were visited and left exactly once by $W$. Note that $|Y\triangle X_1| \leq 1$, as the tail $v_0$ and head $v_k$ of the walk are the only vertices which may be in $X_1$ after being visited twice and then only when $v_0=v_k$. Indeed, the first time a vertex $v$ is visited, a feasible walk must enter and exit $v$ via distinct edges. Color all vertices of $Y$ green. We can write $Y = X_1(t)\setminus \{v_0\}$.

Given a feasible walk $W$, define a {\em green bridge} to be a part of the walk starting and ending in $V\setminus Y$, with any internal vertices being in $Y$. Note also that it is not necessary for a  green bridge to contain any vertices of $Y$. Form the {\em contracted walk} $\langle W\rangle$ by replacing any green bridge by a single green edge, with the walk orientation intact. Let $[W]$ denote the pair of (contracted walk, set), $[W]=(\langle W\rangle, Y)$, noting that $\langle W\rangle$ contains no vertex of $Y$.

We define an equivalence relation on the set of feasible walks by saying that $W\sim W'$ if and only if $[W] = [W']$. See Figure \ref{fig:equiv}. Thus the only way that $W,W'$ differ is as to where the vertices in $Y$ are placed on the green bridges. 

\begin{figure}[H]
  \begin{subfigure}{0.25\textwidth}
    \centering
    \includegraphics[width=\textwidth,page=1]{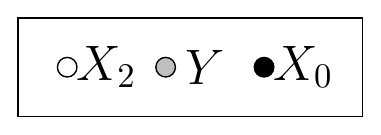}
  \end{subfigure}\\
  \begin{subfigure}{0.75\textwidth}
    \centering
    \includegraphics[width=\textwidth,page=2]{fig-walks}
  \end{subfigure}\\
   \begin{subfigure}{0.75\textwidth}
   \centering
    \includegraphics[width=\textwidth,page=3]{fig-walks}
  \end{subfigure}\\
   \begin{subfigure}{0.75\textwidth}
  \centering
    \includegraphics[width=\textwidth,page=4]{fig-walks}
  \end{subfigure}
  \caption{Two equivalent walks. Unvisited edges and vertices are not displayed, and edges visited exactly once are dashed.
  Lemma \ref{lem:uniform} shows that the walks are equiprobable.}
\label{fig:equiv}
\end{figure}

\begin{lemma}\label{lem:uniform}
Let $k > 0$ and suppose $W$ is such that $\Prob{W_k = W} > 0$. If $[W] = (\langle W\rangle, Y)$ and $\langle W\rangle$ contains $\f$ green edges, then
$$
\Prob{W_k = W \mid [W_k] = [W]} = \frac{1}{|[W]|} = \frac{1}{(\f+|Y|-1)_{|Y|}},
$$
where $(a)_b=a(a-1)\cdots(a-b+1)$.
\end{lemma}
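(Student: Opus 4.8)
The plan is to count, for a fixed equivalence class $[W] = (\langle W\rangle, Y)$, how many feasible walks $W'$ lie in the class and show each is equally likely. The key observation is that knowing $[W_k] = [W]$ tells us the contracted walk $\langle W\rangle$ — in particular the sequence of green edges encountered, in order — together with the set $Y$ of once-visited degree-one vertices; what it does not tell us is the placement of the vertices of $Y$ along the green bridges. So the proof has two halves: (a) a combinatorial count of the number of placements, giving $|[W]| = (\f + |Y| - 1)_{|Y|}$, and (b) a probabilistic argument that, conditioned on $[W_k]=[W]$, all these placements are equiprobable.

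**First I would** set up the counting. A feasible walk $W'$ in the class is obtained from $\langle W\rangle$ by distributing the $|Y|$ vertices of $Y$ among the $\f$ green edges, where a green edge that receives $j$ of them becomes a green bridge passing through those $j$ vertices in some order. Each vertex of $Y$ is visited exactly once by the walk (entered via one of its three edges, exited via another), and — crucially — once a vertex $y\in Y$ is inserted into a bridge, the walk is forced to use its one incident red (now green) edge to continue, because $y$ has a red edge available and the biased walk must take it; there is no branching. Thus a bridge through an ordered sequence $y_1,\dots,y_j$ is fully determined by that ordered sequence. Counting ordered placements of $|Y|$ distinguishable items into $\f$ distinguishable bins, where order within a bin matters, gives exactly $\f(\f+1)\cdots(\f+|Y|-1) = (\f+|Y|-1)_{|Y|}$ — this is the standard "lists into boxes" count (equivalently, inserting items one at a time, the $m$-th item has $\f + m - 1$ available slots). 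This establishes $|[W]| = (\f+|Y|-1)_{|Y|}$.

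**For the equiprobability**, I would compare two walks $W', W''$ in the same class and show $\Prob{W_k = W'} = \Prob{W_k = W''}$ by exhibiting a probability-preserving bijection, or more directly by computing the probability of a feasible walk as a product of transition probabilities and checking the product is a class invariant. At each step the walk's probability picks up a factor: when at a vertex with red edges the factor is $1/(\text{number of red edges there})$, and when at a vertex with no red edges the factor is $1/3$. The point is that the multiset of these factors depends only on $[W]$: inside a green bridge, every internal vertex $y\in Y$ contributes a deterministic (probability-$1$... wait, rather a $1/2$ then forced) transition pattern that is the same regardless of which bridge $y$ sits in or in what position; and the transitions at the non-$Y$ vertices and the choices of which edge to colour are recorded by $\langle W\rangle$ itself. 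Since the walk must enter $y$'s first-visit via an unvisited edge chosen u.a.r. among the (then) red edges at the previous vertex, and these local red-degree counts are reconstructible from the contracted walk plus the knowledge of how many $Y$-vertices precede the current point on each bridge — and summing over the class these contributions telescope — the total probability factors as $\Prob{[W_k]=[W]} \cdot \frac{1}{|[W]|}$.

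**The hard part will be** making the equiprobability argument fully rigorous, because the probability of a given step depends on the current number of red edges at the occupied vertex, and this count genuinely changes as the walk progresses; one must argue that although individual step-probabilities differ between $W'$ and $W''$, the \emph{product} over the whole walk is invariant across the class. The cleanest route is likely an explicit bijection that swaps the roles of two bridges (or moves a single vertex $y$ from one bridge to an adjacent slot on another) and verifies step-by-step that the product of transition probabilities is unchanged — the changes are confined to a bounded window and cancel in pairs. One should also be careful about the two exceptional vertices $v_0$ and $v_k$ (the tail and head), but since $|Y \triangle X_1| \le 1$ and these endpoints are fixed by $\langle W\rangle$, they do not affect the count of placements of $Y$. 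I expect once the bijection is written down the cancellation is mechanical, so the essential content is the counting identity $(\f + |Y| - 1)_{|Y|}$ together with the structural fact that a bridge is determined by its ordered list of $Y$-vertices.
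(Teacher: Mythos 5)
Your proposal matches the paper's proof: the paper likewise establishes equiprobability by writing out $\Prob{W_k=W}=\frac{1}{3n}\frac{1}{2^{r-1}}\frac{1}{3^{s}}\prod_{j=1}^{t+1}\frac{1}{3n-2j+1}$ and observing that this is invariant under exactly the transpositions you describe (moving a once-visited vertex from one green bridge to another), then counts the class by inserting the vertices of $Y$ one at a time into the green edges to get $\f(\f+1)\cdots(\f+|Y|-1)$. The ``hard part'' you anticipate dissolves at once: each step type (first-visit exit, exit from an $X_0$ vertex, forced exit, new matching reveal) has a fixed probability, and the number of occurrences of each type ($r$, $s$, $t$) is determined by $[W]$ alone, so the product is identical factor-for-factor across the class and no delicate cancellation is required.
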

\begin{proof}
Let $W = (x_1,\dots,x_{2k+1})$ be a feasible walk on vertices $(v_1,\dots,v_{k+1})$. If $|\{x_1,\dots,x_{2k+1}\}| = 2t + 1$ and $|\{v_1,\dots,v_{k+1}\}| = r$ then for some $s$,
$$
\Prob{W_k = W} = \frac{1}{3n}\frac{1}{2^{r-1}}\frac{1}{3^{s}}\prod_{j=1}^{t+1} \frac{1}{3n-2j +1}.
$$
{\bf Explanation:} Let $W_k = (y_1,\dots,y_{2k+1})$ be the random walk. Firstly, $\Prob{y_1 = x_1} = 1/3n$. We reveal the matching $\m$ along with the walk. If $i$ is odd and $\m(y_i)$ is not previously revealed, then $\Prob{y_{i+1} = x_{i+1}} = 1/(3n-2j+1)$ where $|\{x_1,\dots,x_i\}| = 2j-1$. If $x_{i+1} \in \PP(v)$ for some $v$ that has not been visited by $W_k$ previously, then $\Prob{y_{i+2} = x_{i+2}} = 1/2$.
All other steps in the random walk either have probability $1$ or $1/3$. Here $s$ is the number of times the walk leaves a vertex that is in $X_0$ at the time of leaving. The value of $s$ is uniquely determined by $\langle W\rangle$.

Now suppose $v_i, 1 < i < k+1$ is a vertex which is visited by $W$ exactly once, so that $v_i\in Y$ and $x_{2i}, x_{2i+1}$ are visited only once. Let $v_j$ be another vertex, not necessarily in $Y$, and let $(x_{2j-1},x_{2j})$ be edge visited only once by $W$. Form the walk
$$
W' = (x_1,x_2,\dots,x_{2i-1},x_{2i+2},\dots,x_{2j-1},x_{2i}, x_{2i+1}, x_{2j},\dots,x_{2k-1}).
$$
Call this a {\em transposition of $W$}. Then, as the number of steps of the different types is unchanged,
\beq{climed}{
\Prob{W_k = W'} = \frac{1}{3n}\frac{1}{2^{r-1}}\frac{1}{3^s}\prod_{j=1}^{t+1}\frac{1}{3n-2j+1} = \Prob{W_k = W}.
}
Now, if $W\sim \widehat{W}$ for two feasible walks, then $\widehat{W}$ can be obtained from $W$ by a sequence of transpositions, and $\Prob{W_k = \widehat{W}} = \Prob{W_k = W}$. It is important to observe here that a transposition can move a vertex of $Y$ onto a green bridge that was previously empty of vertices in $Y$.

The walk $\langle W\rangle$ contains $\f$ green edges. We form any member of $[W]$ by distributing the vertices in $Y$ onto the green edges, assigning an internal order to each resulting green bridge. Let $Y = \{y_1,y_2,\dots,y_m\}$. We place $y_1$ on one of $\f$ green edges of $\langle W\rangle$, breaking the edge into two edges. There are then $\f + 1$ choices for the placement of $y_2$, and so on. This implies
$$
|[W]| = \f(\f+1)(\f+2)\dots(\f+|Y| - 1).
$$
\end{proof}

If $\langle W(t)\rangle$ contains $\f$ green edges $(e_1,e_2,\dots,e_\f)$, we let $(K_1,\dots,K_\f)$, $K_i \geq 1$, denote the lengths of the corresponding paths in $W(t)$. We remark that we can sample the vector $(K_1,\dots,K_\f)$ by a P\'olya urn process. Initially placing $\f$ balls of distinct colors in an urn, we repeat the following $|Y(t)|$ times: draw a ball uniformly at random, replace it in the urn and add another ball of the same color. The sizes of the resulting color classes, including the initial balls, are distributed as $(K_1,\dots,K_\f)$.
\section{Recurrences for $\E{X_i(t)}$}
We discuss the growth rate of $|X_i(t)|,i\geq 0$. In an abuse of notation, we will use $X_i(t),i\geq 0$ to denote both the set and its size. The context should dispel any possible ambiguity.

Let $\HH(t)$ denote the history of the process up until time $t$. We have $X_3(0)=n, X_i(0)=0, i=0,1,2$ and
\begin{align}
\E{X_3(t+1)\mid \HH(t)} & = X_3(t) -\frac{3 X_3(t)}{3n-2t+1}\\
\E{X_2(t+1)\mid \HH(t)}&= 1_{t=0}-\frac{2X_2(t)}{3n-2t+1}\\
\E{X_1(t+1)\mid \HH(t)}&= X_1(t)- \frac{2X_1(t)}{3n-2t+1}+ \frac{3 X_3(t)}{3n-2t+1}+\frac{2X_2(t)}{3n-2t+1}\label{X1t}\\
\E{X_0(t+1)\mid \HH(t)}&=n-(X_1(t)+X_2(t)+X_3(t)).
\end{align}
We remark that $X_2\leq 1$ and is almost irrelevant to the ensuing analysis. As justification for the above equations, consider $X_1$. There is a probability $\frac{3 X_3(t)}{3n-2t+1}$ that the newly paired point is associated with a vertex in $X_3(t)$. In which case, $X_1$ increases by one. Now a step involves visiting two members of $\PP$ and if the newly paired point is associated with a vertex $v\in X_1(t)$ then except in one exceptional case, the walk will move along an already visited edge $\set{v,w}$ where $w\in X_1(t)$ and then the second visited point will be in $\PP(w)$. The exceptional case is when $w\in X_2(t)$, which explains the last term in \eqref{X1t}. The other equations are explained similarly.

Thus,
$$\E{X_3(t)}=n\prod_{i=1}^t\brac{1-\frac{3}{3n-2i+1}}=n\exp\set{-\sum_{l=1}^\infty\frac{3^l}{l2^l} \sum_{i=1}^t \frac{1}{\brac{\frac{3n}{2}-i+\frac12}^l}}$$
Now
$$\int_{x=0}^{t-1}\frac{dx}{\brac{\frac{3n}{2}-x+\frac12}^l}\leq S_l=\sum_{i=1}^t \frac{1}{\brac{\frac{3n}{2}-i+\frac12}^l}\leq \int_{x=0}^{t}\frac{dx}{\brac{\frac{3n}{2}-x-\frac12}^l}
$$
which implies that if $\om_1=3n-2t\to\infty$ then
$$S_1=\log\bfrac{3n}{3n-2t}+O\bfrac{1}{3n-2t}\text{ and }\sum_{l=2}^\infty S_l=O(\om_1^{-1}).
$$
It follows that
\begin{align}
\E{X_3(t)} &= n \bfrac{3n-2t}{3n}^{3/2}\brac{1+O\bfrac{1}{3n-2t}}.\label{EX3}\\
\E{X_1(t)}&= 3n-2t-\E{3X_3(t)+2X_2(t)}\\
&= (3n-2t)\brac{1-\brac{1-\frac{2t}{3n}}^{1/2}}+O\bfrac{3n-2t}{n}^{1/2}. \label{EX1}
\end{align}
We also have some concentration around these values, as described in the following lemma which is proved in Section \ref{sec:concentration}.
\begin{lemma}\label{lem:concentration}
Let $0 < \e < 2/3$ and let $\om$ tend to infinity arbitrarily slowly. Let $\d = \d(t) = (3n-2t) / 3n$.
\begin{enumerate}[(i)]
\item\label{lem:concentration:X3} If $\om^{-1}\geq \d \geq \om n^{-2/3}$ then
$$
\Prob{|X_3(t) - n\d^{3/2}| \geq \frac{n\d^{3/2}}{\om^{1/2}}} = o(1),
$$
and if $\d \leq \om^{-1}n^{-2/3}$ then $|X_3(t)| = 0$ with high probability.
\item \label{lem:concentration:X1a}
Let $\d_1=\log^{-1/2}n$ and let $t_1=(1-\d_1)\frac{3n}{2}$.
$$
\Prob{\exists\,  3n/4 \leq t \leq t_1 : X_1(t) < \d n} = o(1).
$$
\item\label{lem:concentration:X1b} Let $\d_4 = n^{-1}\log^{11} n$.
$$
\Prob{\exists t_1\leq t \leq t_4 : |X_1(t) - 3n\d| \geq \om^{-1}\d n} = o(1).
$$
\item\label{lem:concentration:Phi} Set $\d_3 = n^{-2/3}\log^4 n$.
$$
\Prob{\exists t_1\leq t \leq t_3 : \F(t) < (\d_0\d)^{1/2}n} = o(1).
$$
where $\F(t)$ the number of green edges in $W(t)$.
\end{enumerate}
\end{lemma}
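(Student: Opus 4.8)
The plan in every part is the same. Each of $X_3(t),X_1(t),\F(t)$ has a one‑step conditional drift that is already recorded above (or follows from the same half‑edge bookkeeping); so I subtract the drift, bound the increments and the predictable quadratic variation of the residual martingale, and apply the martingale Bernstein inequality (Freedman). The monotonicity $X_3(t+1)\le X_3(t)$ — once a vertex is incident to a visited edge it never re‑enters $X_3$ — lets me replace the ``$\exists\,t$'' statements by a union over $O(\log n)$ dyadic blocks of ``fixed $t$'' estimates. Throughout I use $3n-2t=3n\d$, $\E{X_3(t)}\sim n\d^{3/2}$ and $\E{X_1(t)}\sim 3n\d(1-\d^{1/2})$ from \eqref{EX3}--\eqref{EX1}.

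For (i), define $p_t$ by $p_0=1$, $p_{t+1}=(1-\tfrac{3}{3n-2t+1})p_t$, so that $M_t=X_3(t)/p_t$ is a martingale with $M_0=n$ and $p_t=(1+o(1))\d^{3/2}$ by \eqref{EX3}. Since $X_3$ drops — by exactly one — only when the far endpoint of the newly discovered edge is a fresh vertex, an event of probability $\tfrac{3X_3(t)}{3n-2t+1}$, we have $X_3(t+1)-X_3(t)\in\{0,-1\}$; hence $|M_{t+1}-M_t|=O(1/p_{t+1})=O(\d^{-3/2})$ and $\Var{M_{t+1}\mid\HH(t)}=O\!\bigl(X_3(t)\,/\,((3n-2t)\,p_t^2)\bigr)$. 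Summing the latter with $X_3(t)\approx np_t$ gives predictable quadratic variation $O(n\d^{-3/2})$ up to time $t$, so Freedman's inequality for $M_t$ yields
$$
\Prob{\,\bigl|X_3(t)-n\d^{3/2}\bigr|>n\d^{3/2}\om^{-1/2}\,}\le 2\exp\!\bigl(-\OM\!\bigl(n\d^{3/2}/\om\bigr)\bigr),
$$
which is $o(1)$ precisely when $n\d^{3/2}$ outgrows $\om$ — true on $\om n^{-2/3}\le\d\le\om^{-1}$, the tail exponent being only of order $\om^{1/2}$ at the lower endpoint, which is exactly why the hypothesis is $\d\ge\om n^{-2/3}$. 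For $\d\le\om^{-1}n^{-2/3}$ I use \eqref{EX3} instead: $\E{X_3(t)}\sim n\d^{3/2}\le\om^{-3/2}=o(1)$, so $X_3(t)=0$ with high probability by Markov, and monotonicity carries this to all larger $t$.

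Parts (ii) and (iii) need no fresh randomness. Counting red half‑edges gives the deterministic identity $X_1(t)+2X_2(t)+3X_3(t)=3n-2t$, with $X_2(t)\le1$ throughout, so $\bigl|X_1(t)-3n\d\bigr|\le 3X_3(t)+2$. For (iii) it therefore suffices to show $3X_3(t)+2\le\om^{-1}\d n$ for all $t_1\le t\le t_4$, and this is a far weaker upper bound on $X_3$ than part (i) delivers: where $\d\ge\om n^{-2/3}$ one has $\E{X_3(t)}\sim n\d^{3/2}=o(\om^{-1}\d n)$ with a Freedman tail, leaving ample room for a union over $O(\log n)$ dyadic blocks via monotonicity of $X_3$; where $\d\le\om^{-1}n^{-2/3}$, $X_3(t)=0$; and on the short window $\om^{-1}n^{-2/3}<\d<\om n^{-2/3}$, Markov's inequality gives $X_3(t)=O(\om^{3/2}\log n)=o(\om^{-1}\d n)$. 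Part (ii) is a crude lower bound on $X_1(t)$ on $[3n/4,t_1]$, read off from the same identity together with the estimate $X_3(t)=(1+o(1))n\d^{3/2}$ of part (i).

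Part (iv) is where the real work lies. I would derive a recurrence for $\E{\F(t+1)-\F(t)\mid\HH(t)}$: each discovery creates one new green (once‑traversed) edge, and a green edge disappears only when re‑traversed, so I need to estimate how many green edges a single between‑discoveries excursion destroys. By the coupling of Section~\ref{sec:hitting} that excursion is a simple random walk on $G(t)$ of expected length about $3n/(3n-2t)$ before hitting $\ol X$, and the uniform‑placement description of the green bridges in Section~\ref{sec:uniform} (Lemma~\ref{lem:uniform} and its P\'olya‑urn reformulation) should let me bound the expected number of green edges such a walk re‑uses. This should yield $\E{\F(t+1)\mid\HH(t)}=\F(t)\bigl(1-O(\tfrac1{3n-2t})\bigr)+(\text{source})$, whose solution on $[t_1,t_3]$ comfortably exceeds the target $(\d_0\d)^{1/2}n$ — the latter being small because $\d\le\d_1\ll\d_0$ (I expect $\E{\F(t)}=\Theta(n\sqrt\d)$ up to logarithmic factors). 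Since the target then lies below $\E{\F(t)}$ by the vanishing factor $\Theta((\log\log n)^{-1/2})$, a one‑sided Freedman tail (or Chebyshev after a second‑moment estimate) comfortably gives $\F(t)\ge(\d_0\d)^{1/2}n$ for all $t$ in the range. The main obstacle is precisely this recurrence: pinning down the destruction rate of green edges forces control of both the number and the length distribution of the green bridges, which is where most of Section~\ref{sec:concentration} will go; by contrast part (i) is routine martingale analysis, its only delicate point being that the tail exponent collapses to order $\om^{1/2}$ at $\d=\om n^{-2/3}$, which dictates how slowly $\om$ may be allowed to grow.
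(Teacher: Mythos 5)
Your treatment of parts (i)–(iii) is essentially the paper's argument in a different costume. The paper computes factorial moments $\E{(X_3(t))_k}\le n^k\d^{3k/2}$ directly from \eqref{eq:probvinX3}, then uses Chebyshev for the two-sided bound and a moment generating function ($\E{z^{X_3(t)}}$) for an everywhere upper bound $X_3(t)\le\frac54 n\d^{3/2}$; you normalise $X_3$ into a multiplicative martingale $M_t=X_3(t)/p_t$ and invoke Freedman. Both routes are sound. Your QV estimate $\sum_s X_3(s)/\bigl((3n-2s)p_s^2\bigr)=O(n\d^{-3/2})$ uses $X_3(s)\lesssim np_s$, which is what you are trying to prove; you should make explicit that you stop the martingale at the first $s$ with $X_3(s)>2np_s$, a standard fix, but worth saying. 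Your observation that $X_3$ is monotone and drops by at most one per unit of edge-time is correct (only the uniformly random far endpoint of the newly revealed pair can remove a vertex from $X_3$), and the dyadic-block reduction of the ``$\exists t$'' statements is fine. Parts (ii) and (iii) via the half-edge identity $X_1+2X_2+3X_3=3n-2t+O(1)$ and the $X_3$ upper bound are exactly what the paper does (\eqref{eq:X1lowerbound} and \eqref{X1a}).

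For part (iv) you have the right ingredients but there is a genuine gap in the concentration step. You write that once the drift recurrence for $\F$ is in hand, ``a one-sided Freedman tail (or Chebyshev after a second-moment estimate) comfortably gives'' the lower bound. Neither of these works off the shelf here, and the reason is the structure you yourself identify: conditional on the new edge landing in $X_1(t)$, the decrement of $\F$ is (by Lemma~\ref{lem:uniform} and the P\'olya-urn picture) stochastically dominated by a geometric random variable with success probability $Y(t)/\F(t)$. That increment is unbounded, so Freedman's inequality in its standard bounded-increment form does not apply. Chebyshev fails for a different reason: even if $\Var{\F(t)}$ were controlled, a polynomial tail bound is far too weak for a union over the $\Theta(n/\sqrt{\log n})$ values of $t$ in $[t_1,t_3]$, and $\F$ is not monotone so the dyadic trick you use for $X_3$ is unavailable. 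What the paper actually does (Lemma~\ref{lem:Phiconcentration} and Claim~\ref{cl:mgf}) is bound the one-step exponential moment $\E{e^{-\th(\F(t+1)-\F(t))}\ind{t}\mid[W(t)]}$ directly, exploiting that the geometric variable has an explicit MGF, and only on the good event $\EE(t)=\{Y(t)\ge(3n-2t)/2\}$ where the geometric success probability is bounded away from zero; the conditioning is deliberately on $[W(t)]$ rather than $\HH(t)$ because the uniform-placement lemma is a statement about the equivalence class, not the raw history. Iterating this one-step bound is the exponential-supermartingale argument in disguise, so you are morally close, but the claim that a naive Freedman or second-moment bound ``comfortably'' finishes is unjustified and would not survive scrutiny; you need the conditional MGF computation and the event $\EE(t)$ (with its own failure probability $\eta=\exp(-n^{1/3}\log^3 n)$ folded into the recursion \eqref{eq:frecursion}), and these are precisely the things your sketch leaves out.
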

\section{Calculating the cover time}\label{sec:covertime}
\subsection{Early stages} \label{sec:phaseone}

With $t_1$ as in Lemma \ref{lem:concentration}, we show that $\E{C(t_1)} = o(n\log n)$. Suppose $W(t) = (x_1,x_2,\dots,x_{2k-1})$ for some $t$ and $k\geq 1$. If $x_{2k-1}\in \PP(\ol X(t))$ then $x_{2k} = \m(x_{2k-1})$ is uniformly random inside $\PP(\ol X(t))$, and since $C(t+1) = C(t) + 1$ in the event of $x_{2k}\in \PP(X_2\cup X_3)$, we have
\begin{equation}\label{eq:conditionincrement}
\E{C(t+1) - C(t)} \leq 1 + \E{C(t+1)-C(t) \mid x_{2k}\in \PP(X_1)}\Prob{x_{2k}\in \PP(X_1)},
\end{equation}
We use the following theorem of Ajtai, Koml\'os and Szemer\'edi \cite{AKS} to bound the expected change when $x_{2k}\in \PP(X_1)$.
\begin{theorem}\label{thm:aks}
Let $G = (V, E)$ be an $r$-regular graph on $n$ vertices, and suppose that each of the eigenvalues of the adjacency matrix with the exception of the first eigenvalue are at most $\l_G$ (in absolute value). Let $Z$ be a set of $cn$ vertices of $G$. Then for every $\ell$, the number of walks of length $\ell$ in $G$ which avoid $Z$ does not exceed $(1-c)n((1-c)r + c\l_G)^\ell$.
\end{theorem}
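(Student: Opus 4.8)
The plan is to reduce the statement to a spectral estimate for the subgraph induced on $V\setminus Z$. Write $Y=V\setminus Z$, so $|Y|=(1-c)n$, and let $B$ be the adjacency matrix of the induced graph $G[Y]$. A walk of length $\ell$ in $G$ avoiding $Z$ is exactly a walk of length $\ell$ in $G[Y]$, so the quantity to be bounded equals $\mathbf{1}^{\mathsf T}B^\ell\mathbf{1}$, where $\mathbf{1}$ is the all-ones vector on $Y$. First I would apply Cauchy--Schwarz together with submultiplicativity of the operator norm: $\mathbf{1}^{\mathsf T}B^\ell\mathbf{1}\le \|\mathbf{1}\|^2\,\|B^\ell\|_{\mathrm{op}}\le |Y|\,\|B\|_{\mathrm{op}}^{\ell}$. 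Since $B$ is symmetric, $\|B\|_{\mathrm{op}}$ equals the spectral radius of $B$, and since $B$ is the adjacency matrix of a graph its largest eigenvalue dominates the modulus of its smallest, so $\|B\|_{\mathrm{op}}=\lambda_{\max}(B)$. It therefore suffices to prove $\lambda_{\max}(B)\le (1-c)r+c\lambda_G$.

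For this I would use the Rayleigh characterisation $\lambda_{\max}(B)=\max\{x^{\mathsf T}Ax:\ \|x\|=1,\ \mathrm{supp}(x)\subseteq Y\}$, where $A$ is the adjacency matrix of $G$ and $x$ is extended by zeros to $\R^n$. Given such an $x$, split it as $x=\alpha u+x'$ with $u=\mathbf{1}_n/\sqrt n$ the top eigenvector of $A$ (eigenvalue $r$) and $x'\perp u$. The cross terms vanish because $A$ is symmetric, so $x^{\mathsf T}Ax=\alpha^2 r+(x')^{\mathsf T}Ax'\le \alpha^2 r+\lambda_G\|x'\|^2=\alpha^2(r-\lambda_G)+\lambda_G$. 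Because $x$ is supported on $Y$, Cauchy--Schwarz gives $\alpha^2=(x^{\mathsf T}\mathbf{1}_n)^2/n=(x^{\mathsf T}\mathbf{1}_Y)^2/n\le |Y|/n=1-c$, and since $r-\lambda_G\ge 0$ we may push $\alpha^2$ up to its maximum to conclude $x^{\mathsf T}Ax\le (1-c)(r-\lambda_G)+\lambda_G=(1-c)r+c\lambda_G$. Combining the two estimates gives the claimed bound.

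The computation is essentially routine linear algebra, so the only delicate points — the ``hard part'' — are in pinning down the constants: one must use the sign condition $r-\lambda_G\ge 0$ (so that maximising $\alpha^2$ is indeed optimal) to get the factor $c\lambda_G$ rather than the weaker $\lambda_G$, and for odd $\ell$ one needs the graph-theoretic fact that the spectral radius of $B$ is attained at its largest (rather than its most negative) eigenvalue. Since the theorem is a cited result, in the paper I would simply invoke \cite{AKS}, but the sketch above is self-contained.
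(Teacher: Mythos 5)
The paper does not prove Theorem \ref{thm:aks}; it cites the result from Ajtai, Koml\'os and Szemer\'edi \cite{AKS}, so there is no internal proof to compare against. Your proposal is a correct, self-contained proof, and it follows what is essentially the standard spectral argument for this lemma.

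A few points worth confirming. The reduction to $\mathbf{1}^{T}B^{\ell}\mathbf{1}$ is exact, and the chain $\mathbf{1}^{T}B^{\ell}\mathbf{1}\le\|\mathbf{1}\|^{2}\|B\|_{\mathrm{op}}^{\ell}=|Y|\,\lambda_{\max}(B)^{\ell}$ is valid; the identity $\|B\|_{\mathrm{op}}=\lambda_{\max}(B)$ holds for any nonnegative symmetric matrix (whether or not $G[Y]$ is connected), since if $v$ is a unit eigenvector for the eigenvalue of largest modulus then $\lambda_{\max}(B)\ge|v|^{T}B|v|\ge|v^{T}Bv|=\|B\|_{\mathrm{op}}$. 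The central estimate $\lambda_{\max}(B)\le(1-c)r+c\lambda_G$, obtained from the Rayleigh quotient via the decomposition $x=\alpha u+x'$ and the Cauchy--Schwarz bound $\alpha^{2}\le 1-c$, is exactly right. The sign condition $r-\lambda_G\ge0$ that you flag as delicate is in fact automatic: the spectrum of an $r$-regular graph lies in $[-r,r]$, so the minimal admissible $\lambda_G$ is at most $r$, and if one took $\lambda_G>r$ the theorem's bound would only be weakened and hence still hold. In short, the proposal is complete and correct; it fills in the proof that the paper leaves to the cited reference.
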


The set $Z$ of Theorem \ref{thm:aks} is  fixed. In our case the exit vertex $u$ of the red walk is chosen randomly from $X_1(t)$. This follows from the way the red walk constructs the graph in the configuration model. The subsequent walk  now begins at  vertex $u$ and continues until it hits a vertex of $Y_u=X_1(t)\setminus\{u\}$ (or more precisely $Y_u \cup X_2(t)$). Because the exit vertex $u$ is random, the set $B_u=Y_u\cup X_2(t)\cup X_3(t)$ differs for each possible exit vertex $u \in X_1(t)$. To apply Theorem \ref{thm:aks}, we  split $X_1(t)$ into two disjoint sets $A, A'$ of (almost) equal size. For  $u \in A$, instead of considering the number of steps needed to hit $B_u$, we can upper bound this by the number of steps needed to hit $B'=A'\cup X_2\cup X_3$.

Let $Z(\ell)$ be a simple random walk of length $\ell$ starting from a uniformly chosen vertex of $A$. Thus $Z(\ell)$ could be any of $|A| 3^\ell$ uniformly chosen random walks. Let $c = |B'| / n$. The probability $p_\ell$ that a randomly chosen walk of length $\ell$ starting from $A$ has avoided $B'$ is at most
$$
p_\ell \leq \frac{1}{(|X_1(t)|/2)3^\ell}(1-c)n(3(1-c) + c\l_G)^\ell \leq \frac{2(1-c)n}{|X_1(t)|}((1-c) + c\l)^\ell,
$$
where $\l\leq.99$ (see Lemma \ref{lem:Grproperties}) is the absolute value of the second largest eigenvalue of the transition matrix of $Z$. Thus
\beq{AHC}{
\Ex{A}{H(B')} \leq \sum_{\ell\geq 1} p_\ell \leq \frac{2(1-c)n}{|X_1(t)|} \frac{1}{c(1-\l)}.
}
As $|B'| = |X_1|/2 + |X_3|$, we have
\begin{equation}\label{eq:aksincrement}
\E{C(t+1) - C(t) \mid x_{2k}\in \PP(X_1(t))} = O\bfrac{(n - |X_3|)n}{|X_1|\left(|X_1|+ |X_3|\right)}.
\end{equation}

{\bf Phase I: $t \leq 3n/4$.}

We use the bound $|X_3(t)| \geq n-t\geq n/4$ and the fact that
$$\Prob{x_{2k}\in \PP(X_1(t))} = \frac{|X_1(t)|}{3n-2t-1} = O\bfrac{|X_1(t)|}{n}.$$
Then \eqref{eq:conditionincrement} and \eqref{eq:aksincrement} imply
\begin{align}
\E{C(t+1) - C(t)} & \leq 1 +O\brac{\frac{tn}{|X_1(t)| n}\cdot \frac{|X_1(t)|}{n}} = 1 + O\bfrac{t}{n}.
\end{align}
Summing over $t \leq 3n/4$ gives $\E{C(3n/4)} = O(n)$.

{\bf Phase II: $3n/4 \leq t \leq (1-\d_1)3n/2$.}

It follows from Lemma \ref{lem:concentration} (\ref{lem:concentration:X1a}) that with high probability, for all $3n/4 \leq t \leq (1-\d_1)\frac{3n}{2}$, we have $|X_1(t)| \geq (3n-2t)/3$. In particular, $|X_1|(|X_1|+ |X_3|) = \OM((3n-2t)^2)$, and by \eqref{eq:aksincrement},
\begin{align}
\E{C\left((1-\d_1)\frac{3n}{2}\right) - C\bfrac{3n}{4}} & \leq \sum_{t = \frac{3n}{4}}^{(1-\d_1)\frac{3n}{2}} O\left[\frac{(n-|X_3|)n}{|X_1|\left(|X_1|+ |X_3|\right)}\right] \\
& =  O\left[\sum_{t=\frac{3n}{4}}^{(1-\d_1)\frac{3n}{2}}\frac{n^2}{(3n-2t)^2}\right] \\
& = O\bfrac{n}{\d_1}\\
&=o(n\log n).\label{onlogn}
\end{align}
\subsection{Later Stages}
We will now use Lemmas \ref{lem:roothitting} and \ref{lem:uniform}, together with Definition \ref{def:rootset} and Lemma \ref{lem:concentration}.

For $t$ with $\d \leq \log^{-1/2}n$ we set $\om = \om(t) = \log(-\log \d)$ and define the events (with $\ol X(t) = X_1(t) \cup X_2(t)\cup X_3(t)$)
\begin{align}
\AA(t) & = \{|X_1(t) - 3n\d| =O( \om^{-1}\d n)\}, \\
\BB(t) & = \{\ol X(t) \text{ is a root set of order } \om\}.
\end{align}
and set $\EE(t) = \AA(t) \cap \BB(t)$. As a consequence of Lemma \ref{lem:roothitting}, equation \eqref{AHC} and the fact that $\E{\ol X(t)} \approx 3n-2t$, we have
\begin{equation}\label{eq:condincrement}
\E{C(t+1) - C(t)} = (3\pm \e)\frac{n}{3n-2t}\Prob{\EE(t)} +  O\brac{\frac{n}{3n-2t}} \Prob{\ol{\EE(t)}} + O(\log n).
\end{equation}
Here the $O(\log n)$ and $\e$ terms account for the number of steps needed to take for the random walk Markov chain to mix to within variation distance $\e$ of the stationary distribution, at which time we apply Lemma \ref{lem:roothitting}. Here we rely on $\l^*(t) \leq 0.99$. In the event of $\ol{\EE(t)}$ we use the fact that $\ol X(t) =\Omega(3n-2t)$, which follows from Lemma \ref{lem:concentration}(ii) and the hitting time bound $\frac{1}{1-\l} \frac{n}{\ol X(t)}$ (see \eqref{eq:js}) to conclude that the hitting time is $O(n/(3n-2t))$.

Lemma \ref{lem:concentration} implies that $\AA(t)$ occurs with high probability for any fixed $t \geq 3n(1-\log^{-1/2}n)/2$ and we will argue in Sections \ref{sec:expansionviaconcentration} and \ref{sec:expansionwithoutconcentration} that $\BB(t)$ also occurs with high probability. Lemma \ref{lem:increment} will follow. Lemma \ref{lem:lastedges} is proved in Section \ref{seclastedges}  and Lemma \ref{lem:Cstar} follows.

\subsection{Expansion via concentration}\label{sec:expansionviaconcentration}

As discussed above, we are interested in showing that the event $\EE(t)$ occurs with high probability.
\begin{lemma}\label{lem:expansionviaconcentration}
Fix $t$ and let $\d=(3n-2t)/n$. If $\d_1=\log^{-1/2} n \geq \d \geq \d_3=n^{-2/3}\log^4 n$ then,
$$
\Prob{\EE(t)} = 1 - o(1).
$$
\end{lemma}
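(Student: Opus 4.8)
The goal is to show that $\EE(t) = \AA(t)\cap\BB(t)$ holds w.h.p. for $\d_3\le\d\le\d_1$. Since Lemma~\ref{lem:concentration}(\ref{lem:concentration:X1b}) already gives $\Prob{\AA(t)} = 1-o(1)$ in this range (note $\d\ge\d_3 = n^{-2/3}\log^4 n \ge n^{-1}\log^{11}n = \d_4$ for $n$ large, so $t\le t_4$), the whole content is to verify the three defining properties of a root set of order $\om = \log(-\log\d)$ for $S = \ol X(t)$, w.h.p. Throughout I would condition on the w.h.p. events supplied by Lemmas~\ref{lem:Grproperties} and~\ref{lem:concentration}, in particular on $G$ having at most $\om3^\om$ short cycles, on $X_1(t) = (3\pm o(1))n\d$ and $X_3(t) = (1+o(1))n\d^{3/2}$ (so $\ol X(t) = (1+o(1))\cdot 3n\d$ since $X_3, X_2 = o(X_1)$ when $\d\ge\d_3$), and on $\F(t)\ge(\d_0\d)^{1/2}n$ from part~(\ref{lem:concentration:Phi}).

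\textbf{Property (i), the size bound $|S|\ge\om^5$.} This is immediate: $|S| = \ol X(t) = \OM(n\d) \ge \OM(n\cdot n^{-2/3}\log^4 n) = \OM(n^{1/3}\log^4 n)$, which dwarfs $\om^5 = (\log(-\log\d))^5 = O((\log\log n)^5)$.

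\textbf{Property (ii), few internal edges of $S$.} The number of edges with both endpoints in $\ol X(t) = X_1\cup X_2\cup X_3$ splits according to which parts the endpoints lie in. Edges inside $X_1$ are exactly the green-bridge endpoints, i.e.\ red edges of $G(t)$ joining two $X_1$-vertices — there are at most $|X_1|/2$ of these (each $X_1$ vertex has one red edge), and one would like to say there are essentially exactly $|X_1|/2$ plus a negligible correction. The only way to get fewer than $|X_1|/2$, or to get extra internal edges, is for a red edge to join $X_1$ to $X_2\cup X_3$, or for both configuration points of an $X_1$–$X_1$ red edge to be matched to each other twice, etc.; all of these are controlled by $X_2\le 1$ and $X_3 = O(n\d^{3/2}) = o(|S|/\om^3)$ since $\d^{3/2}/\d = \d^{1/2} \le \d_1^{1/2} = \log^{-1/4}n = o(\om^{-3})$. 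So the internal edge count is $|S|/2 + O(X_3) + O(1) \le (1/2 + \om^{-3})|S|$, as required. The matching here with the paper's setup is that the green edges of $\langle W(t)\rangle$ are precisely the contracted internal $X_1$-edges.

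\textbf{Property (iii), few short $S$–$S$ paths avoiding internal $S$-edges — the main obstacle.} This is where the real work lies. Such a path of length $\le\om$ corresponds, after contracting $S$, to a short cycle through the contracted node $s$ that is not a self-loop; equivalently to a path in $G$ of length $\le\om$ between two vertices of $X_1$ (essentially) whose interior lies outside $S$. I would bound the expected number of such paths. There are two sources. First, paths that use an actual short cycle of $G$: since $G$ has at most $\om3^\om$ cycles of length $\le 2\om$ and $\om3^\om = o(n\d/\om^3) = o(|S|/\om^3)$ (again using $\d\ge\d_3$), these are negligible. Second, and this is the crux, ``accidental'' short paths: a path that starts at some $u\in X_1$, wanders through green edges and $X_0$ for up to $\om$ steps, and happens to land on another vertex of $X_1$. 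To bound the expected number of these I would invoke Lemma~\ref{lem:uniform} — conditionally on the contracted structure $[W(t)]$, the $|Y(t)|$ once-visited vertices are placed uniformly at random on the $\f = \F(t)$ green edges via the described P\'olya urn. Thus, fixing any vertex $u\in X_1$ and any short path-skeleton emanating from it in the ambient graph, the probability that a prescribed far endpoint is also a member of $X_1$ is controlled by the density of $X_1$-vertices among green-edge positions, which is $|X_1|/\F(t) = O(n\d/((\d_0\d)^{1/2}n)) = O((\d/\d_0)^{1/2}) = O(\d^{1/2}\log\log n)$ per position. Summing over the $\le|X_1|$ starting vertices and the $O(3^\om) = O(\mathrm{polylog})$ skeletons of each length $\le\om$, and over lengths, the expected number of accidental short paths is $O(|X_1|\cdot 3^\om\cdot\om\cdot\d^{1/2}\mathrm{polyloglog})$. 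The cushion is that this must be $o(|S|/\om^3) = o(n\d/\mathrm{polyloglog})$; dividing, we need $3^\om\cdot\d^{1/2}\cdot\mathrm{polyloglog} = o(1)$, and since $\om = \log(-\log\d)$ we have $3^\om = (-\log\d)^{\log 3} = \mathrm{polylog}(1/\d) \le \mathrm{polylog}(n)$ while $\d^{1/2}\ge\d_3^{1/2} = n^{-1/3}\log^2 n$ — so $3^\om\d^{1/2}$ is at most $n^{-1/3}\mathrm{polylog}(n) = o(1)$, comfortably. Markov's inequality then gives property (iii) w.h.p.

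\textbf{Assembly.} Combining: $\BB(t)$ holds w.h.p.\ by the union of (i)–(iii), $\AA(t)$ holds w.h.p.\ by Lemma~\ref{lem:concentration}(\ref{lem:concentration:X1b}), hence $\Prob{\EE(t)} = 1 - o(1)$. The delicate point throughout is that everything must be checked against the shrinking budget $|S|/\om^3 \sim n\d/\mathrm{polyloglog}(n)$, and the reason it works is the gap between $\d_3 = n^{-2/3}\log^4 n$ and the truly dangerous scale $n^{-1}$: for $\d$ above $\d_3$ the quantity $n\d$ is polynomially large in $n$, swamping all the short-cycle and accidental-path error terms which are only polylogarithmic. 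Handling $\d$ below $\d_3$ (where $X_3$ may already be empty and these estimates break) is deferred to Section~\ref{sec:expansionwithoutconcentration}.
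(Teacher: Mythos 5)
Your overall strategy matches the paper's: invoke Lemma~\ref{lem:concentration} to get $\AA(t)$, then verify the three root-set conditions for $\ol X(t)$ via the once-visited-vertex placement of Lemma~\ref{lem:uniform} and a first-moment/Markov estimate (properties (i) and (iii) go through essentially as you describe, with the same key probability $\Prob{d(u,v)\le\om}=O(3^\om/\F)$).

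However, your verification of property (ii) has a genuine gap. You assert that edges with both endpoints in $X_1$ ``are exactly \dots red edges of $G(t)$ joining two $X_1$-vertices.'' That is false: there are also \emph{discovered} edges between $X_1$-vertices. Whenever the walk first traverses a path of fresh vertices $u_1\to u_2\to\cdots\to u_k$ and later never revisits them, each $u_i$ ends in $X_1$ and the $k-1$ traversed edges between them lie inside $X_1$; these are precisely the edges interior to green bridges. Their count is not controlled by the red-edge tally $|E_t^c|=(X_1+2X_2+3X_3)/2$ nor by $X_2,X_3$; it must be bounded separately, by the same collision estimate you use for (iii): the expected number of such edges is $O(|X_1|^2/\F)=O(|X_1|(\d/\d_0)^{1/2})=o(|X_1|/\om^3)$. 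The paper handles this explicitly by bounding \emph{all} short paths between $X_1$-vertices on edges of $E_t$ (including the length-one paths $E(X_1)\cap E_t$), killing (ii) and (iii) in one computation; your (iii) bound, restricted to paths avoiding internal $S$-edges, cannot be reused as written. A smaller quibble: your closing numerology claims $3^\om\d^{1/2}\le n^{-1/3}\mathrm{polylog}(n)$ over the whole range, but this only holds near $\d\approx\d_3$; at $\d\approx\d_1$ one gets $\log^{-1/4}n\cdot\mathrm{polyloglog}(n)$, which is still $o(\om^{-3})$ but is what actually needs to be checked (it is exactly the paper's condition \eqref{valom}).
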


\begin{proof}
Fix some $t,\d$ in the given range. Expose $[W(t)]$. Lemma \ref{lem:concentration} shows that with high probability, $\F(t), |X_1(t)|$ satisfy
\begin{align}
\F(t) & \geq (\d_0\d)^{1/2}n, \\
|X_1(t)| & = 3\d n + O(\om^{-1}\d n).
\end{align}
As already remarked, this shows that $\Prob{\AA(t)} = 1-o(1)$. We next bound $|X_3(t)|$. To do this, we will only use the fact that it is dominated by $|X_1(t)|$ throughout this phase: as $\d \geq n^{-1/2}\log^4 n$ and $|X_1(t)|\approx 3\d n$, by Lemma \ref{lem:concentration}.
$$
|X_3(t)| = \d^{3/2}n + O(\om^{-1}\d^{3/2}n) = o(\d n) = o(|X_1(t)|)
$$
with high probability. We can now show that $\ol X(t) = X_1(t)\cup X_2(t) \cup X_3(t)$ is a root set of order $\om$ with high probability. Here $\om$ is chosen to satisfy \eqref{valom} below.

Let $E_t$ denote the set of $t$ edges discovered by the walk, and $E_t^c$  the set of (random) edges yet to be discovered. The number of edges inside $\ol X(t)$ is given by
\begin{align}
e(\ol X(t)) = |E_t^c| + |E(X_1\cup X_2)\cap E_t|
\end{align}
where $|E_t^c| = (X_1 + 2X_2+ 3X_3)/2$, so
$$
|E_t^c|=\frac{|X_1|}{2} + O(\d_1^{1/2}) =\frac{|X_1|}{2} + O(\om^{-3})
$$
for $\om^{3} \ll\d_0^{-1/2}$.

We bound the number of paths of length at most $\om$ between vertices of $X_1$ on edges of $E_t$, showing that the number is $O(|X_1|/\om^3)$. Note that such paths include $E(X_1)\cap E_t$, so that the bound implies $|E(X_1)\cap E_t| = O(|X_1|/\om^3)$.

Let $u,v\in X_1$. Suppose $u$ is placed on some green edge $f_1$. There are at most $3^\om$ green edges at distance at most $\om$ from $f_1$, so as $v$ is placed in a random green edge,
$$
\Prob{d(u, v) \leq \om} = O\bfrac{3^\om}{\F} = O\bfrac{3^\om}{n(\d_0\d)^{1/2}}.
$$
So the expected number of pairs $u,v\in X_1$ at distance at most $\om$ is bounded by
\begin{equation}\label{eq:distancebound}
\sum_{u, v\in X_1} \Prob{d(u, v)\leq \om} = O\bfrac{|X_1|^23^\om}{n(\d_0\d)^{1/2}} = O(n\d_0^{-1/2}\d^{3/2}3^\om) = o(|X_1|/\om^3),
\end{equation}
if we choose
\beq{valom}{
\om^33^\om\ll(\d_0/\d)^{1/2}.
}
 With high probability the number of paths is $O(|X_1|/\om^3)$ by the Markov inequality. This shows that $\ol X(t)$ is a root set of order $\om$ with high probability.
\end{proof}

\subsection{Maintaining expansion without concentration} \label{sec:expansionwithoutconcentration}
\begin{lemma}\label{lem:expansionwithoutconcentration}
For any fixed $t$ such that $\d=\d(t)$ satisfies $\d_4=n^{-1}\log^{11}n\leq \d\leq \d_3=n^{-2/3}\log^4 n$,
$$
\Prob{\EE(t)} = 1 - o(1).
$$
\end{lemma}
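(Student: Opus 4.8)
The plan is to mimic the argument of Lemma~\ref{lem:expansionviaconcentration}, but since we no longer have the green-edge concentration bound $\F(t)\ge(\d_0\d)^{1/2}n$ from Lemma~\ref{lem:concentration}(\ref{lem:concentration:Phi}) in the regime $\d<\d_3$, we must replace the global count of green edges by a direct argument that $\ol X(t)$ has the root-set structure. First I would fix $t,\d$ with $\d_4\le\d\le\d_3$, set $\om=\log(-\log\d)$, and expose $[W(t)]=(\langle W(t)\rangle,Y(t))$. By Lemma~\ref{lem:concentration}(\ref{lem:concentration:X1b}) we have $|X_1(t)|=3\d n(1+O(\om^{-1}))$ w.h.p., so $\AA(t)$ holds and $|\ol X(t)|=\Theta(\d n)\ge\Theta(\log^{11}n)\gg\om^5$, giving condition (i) of Definition~\ref{def:rootset}. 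Also $|X_3(t)|\le$ something like $\d^{3/2}n=o(\d n)$ w.h.p.\ (or, if we cannot get concentration there, we use the crude bound $X_3(t)\le 3n-2t=3\d n$ together with the fact that once $\d<\d_3$ the contribution of $X_3$ to short paths is negligible), and $X_2\le1$, so the edge-count condition (ii) reduces as before to showing $|E(X_1)\cap E_t|=O(|X_1|/\om^3)$, which is subsumed by the path bound (iii).

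The heart of the matter is condition (iii): the number of paths of length at most $\om$ between vertices of $\ol X(t)$ lying in the discovered edge set $E_t$ is $O(|\ol X(t)|/\om^3)$. As in \eqref{eq:distancebound} I would use the P\'olya-urn description of the green-path lengths $(K_1,\dots,K_\f)$ from the end of Section~\ref{sec:uniform}: conditioned on $[W(t)]$, each vertex of $X_1(t)$ sits in a uniformly random green edge. The obstacle is that $\f=\F(t)$ may now be small — as small as, say, a constant times $|Y(t)|/\log$-something — so the naive bound $\Prob{d(u,v)\le\om}=O(3^\om/\F)$ is too weak when $\F$ is not $\gg|X_1|3^\om\om^3$. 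To get around this I would instead bound the number of short paths through the \emph{contracted walk} $\langle W(t)\rangle$ together with the few short cycles of $G$: a length-$\le\om$ path in $E_t$ between two vertices of $X_1(t)$ either (a) stays within a single green bridge, or (b) uses at least one non-green (twice-visited) edge or passes through $\ol X(t)$, or (c) closes up using one of the $\le\om^2$ short cycles of $G$ (Lemma~\ref{lem:Grproperties}(ii)). Case (c) contributes $O(\om^2)=O(|\ol X|/\om^3)$ since $|\ol X|\ge\log^{11}n$. Case (a) requires two vertices of $X_1(t)$ to land in the same green edge or in green edges joined by a short sub-walk of $\langle W(t)\rangle$; since $\langle W(t)\rangle$ has $\f$ edges and the urn places $|Y(t)|$ balls uniformly, the expected number of such coincidences is $O(|Y(t)|^2\om/\f)$ plus $O(|Y(t)|\cdot(\text{number of green edges within }\om\text{ of a fixed green edge})/\f)$; using $\f+|Y(t)|=\Theta(\text{number of once-visited edges})=\Theta(\d n)$ and $\f\ge c\,|Y(t)|/\om$ for a suitable lower bound on $\f$, this is $o(|\ol X|/\om^3)$ provided $\om^4 3^\om\ll$ the relevant ratio, which holds because $\om\to\infty$ so slowly. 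Case (b): entering and leaving $\ol X(t)$ through twice-visited edges forces the path to traverse a red (untraversed) edge's endpoint structure which is again controlled by the short-cycle count, or to pass through two distinct vertices of $\ol X(t)$ already counted.

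So the steps in order are: (1) fix $t,\d,\om$ and expose $[W(t)]$; (2) invoke Lemma~\ref{lem:concentration}(\ref{lem:concentration:X1b}) to get $|X_1(t)|=3\d n(1+o(1))$ w.h.p.\ and hence $\AA(t)$ and $|\ol X(t)|\in[\log^{11}n,\,3\d n]$, settling condition (i); (3) bound $X_3(t)$ and $X_2(t)$ to reduce condition (ii) to condition (iii); (4) classify short paths in $E_t$ between vertices of $\ol X(t)$ into the green-bridge-internal type, the short-cycle type, and the type crossing $\ol X(t)$, bounding the last two by $O(\om^2)+O(|\ol X|/\om^3)$ via Lemma~\ref{lem:Grproperties}(ii) and double counting; (5) bound the green-bridge-internal type in expectation via the P\'olya urn, using a lower bound on $\f=\F(t)$ in terms of $|Y(t)|$ that is crude enough to survive without concentration (e.g.\ $\f=\Omega(|Y(t)|/\mathrm{polylog})$, or even just $\f\ge1$ handled separately when $\f$ is tiny since then $|Y(t)|$ small forces $|\ol X|$ small, contradiction, or a stopping-time/Azuma argument on the urn); (6) apply Markov to conclude condition (iii) holds w.h.p., so $\BB(t)$ and therefore $\EE(t)=\AA(t)\cap\BB(t)$ holds w.h.p. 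The main obstacle I expect is step (5): without the clean lower bound on $\F(t)$ one must either prove a weaker but sufficient lower bound on the number of green edges in this sparse regime (perhaps $\F(t)=\Omega(\d n/\mathrm{polylog}\,n)$, which should follow because the green edges are precisely the once-visited edges and there are $\Theta(\d n)$ of those while $|Y(t)|=|X_1(t)\setminus\{v_0\}|=\Theta(\d n)$ too, so a constant fraction of once-visited edges contain no $X_1$-vertex) or argue directly with the urn that short coincidences are rare; handling the genuinely adversarial case where almost all once-visited edges are consumed by $X_1$-vertices (so $\f$ could be $o(\d n)$) is where the delicate counting lives, and it is exactly here that the hypothesis $\d\ge\d_4=n^{-1}\log^{11}n$, guaranteeing $|\ol X(t)|\ge\log^{11}n\gg\om^{5}$, does the work by making the $O(\om^2)$ short-cycle error term negligible against $|\ol X|/\om^3$.
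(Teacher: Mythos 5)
Your plan hits a genuine wall exactly where you predict, and you do not close the gap. In your step (5) you need a usable lower bound on the number of green edges $\F(t)$, but Lemma~\ref{lem:concentration}(\ref{lem:concentration:Phi}) only supplies $\F(t)\ge(\d_0\d)^{1/2}n$ for $t\le t_3$; in the regime $t_3<t\le t_4$ no such bound is given, and your suggested surrogates ($\F(t)=\Omega(\d n/\mathrm{polylog}\,n)$, or ``a stopping-time/Azuma argument on the urn'') are left as conjectures. Without a quantitative lower bound on $\f=\F(t)$ relative to $|Y(t)|$, the P\'olya-urn estimate of coincidences does not yield the required $O(|\ol X(t)|/\om^3)$ bound on short paths, so condition (iii) of Definition~\ref{def:rootset} is not established. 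Attempting to run the Lemma~\ref{lem:expansionviaconcentration} argument ``directly at time $t$'' is precisely what fails in this phase.

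The paper avoids this entirely by a monotonicity argument anchored at time $t_3$, where the concentration bounds \emph{do} hold. Define $X_1''(t)\subseteq X_1(t)$ as the vertices whose distance-$\le\om$ neighbourhood in $E_t\cap(X_0\cup X_1)$ is not a perfect binary tree. Running the \eqref{eq:distancebound}-style count once, at $t=t_3$, gives $|X_1''(t_3)|=O(n\d_3^{3/2}3^\om)=O(\log^7n)$ w.h.p.; and the crucial observation is that this bad set only shrinks (up to the $O(\log^6 n)$ vertices that can newly enter $X_1$ from $X_2\cup X_3$) as $t$ increases through $[t_3,t_4]$, since edges are only ever added. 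Meanwhile Lemma~\ref{lem:concentration}(\ref{lem:concentration:X1b}) gives $|X_1(t)|\ge(1-o(1))\log^{11}n$ for $\d\ge\d_4$, so $|X_1''(t)|/|X_1(t)|=O(\om^{-3})$, which is exactly what is needed for $\ol X(t)$ to be a root set of order $\om$. No information about $\F(t)$ in the range $t>t_3$ is used at all. Your outline contains none of this: there is no snapshot at $t_3$, no monotonicity of the bad set, and the $\log^{11}n$ lower bound on $|X_1(t)|$ is used only to kill the short-cycle term, not to dominate the frozen bad set. The missing idea is to do the structural count once, at the last time concentration is available, and then propagate it forward.
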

\begin{proof}
By Lemma \ref{lem:concentration}, at time $t_3 = (1-\d_3)\frac{3n}{2}$ the sizes of $X_1, X_3$ and $\F$ satisfy the following with high probabilty,
\begin{align}
\F(t_3) & \geq n(\d_0\d_3)^{1/2}, \\
|X_1(t_3)| & \approx 3n\d_3, \\
|X_3(t_3)| & \approx n\d_3^{3/2} = O(\log^6 n).
\end{align}
For $v\in X_1$ let $N_\ell^0(v)$ denote the number of vertices of $X_0$ at distance $\ell$ from $v$, using only edges of $E_t$. Let $X_1'\subseteq X_1$ denote the set of vertices $v\in X_1$ with $|N_\ell^0(v)| = 2^\ell$ for all $\ell\leq \om$, and let $X_1'' = X_1\setminus X_1'$. We have $|X_1''(t_3)| = O(n\d_3^{3/2}3^\om) = O(\log^7 n)$ with high probability from \eqref{eq:distancebound}. By Lemma \ref{lem:concentration} \ref{lem:concentration:X1b} we have $X_1(t) \geq (1 - o(1))\log^{11} n$ for $\d \geq \d_4$. So for $t_3 \leq t \leq t_4$ we have $|X_1''(t)| \leq |X_1''(t_3)| = O((\log n)^{11} / \om^3) = O(|X_1(t)| / \om^3)$ w.h.p. This shows that $\ol{X}(t)$ is a root set of order $\om$.
\end{proof}
\subsection{The final edges: Proof of Lemma \ref{lem:lastedges}}\label{seclastedges}

Recall that $\d_4 = n^{-1}\log^{11}n$ and $t_4 = (1-\d_4)\frac{3n}{2}$. So far we have shown that the edge cover time claimed by Lemma \ref{lem:Cstar} holds for all $t_1\leq t\leq t_4$. We now show that
$$
\E{C\bfrac{3n}{2} - C(t_4)} = o(n\log n).
$$
Fix $t_4 \leq t < \frac{3n}{2}$. We bound the hitting time of $\ol X(t) = X_1(t)\cup X_2(t) \cup X_3(t)$, which has size $3n-2t$. We contract $\ol X(t)$ into a single node $x$ and apply Lemma \ref{lem:hitting}, using the bound
$$Z_{xx} \leq \sum_{t\geq 0}|P_x^{(t)}(x) - \pi_x| \leq \sum_{t\geq 0}\l^t = \frac{1}{1-\l} = O(1).$$
As the random walk Markov chain mixes to within $\e$ total variation distance of $\pi$ in $O(\log n)$ steps, it follows that
$$
\E{C(t + 1) - C(t)} \leq O(\log n) + \frac{1}{1-\l}\frac{n}{3n-2t}.
$$
So, as $3n/2 - t_4 = O(\log^{11} n)$,
$$
\E{C\bfrac{3n}{2} - C(t_4)} \leq O(\log^{12} n) + \frac{1}{1-\l}\sum_{t = t_4}^{\frac{3n}{2} - 1}\frac{n}{3n-2t} = O\left(n\log\left(\log^{11} n\right)\right) = o(n\log n).
$$

\subsection{Strengthening to ``with high probability''}\label{sec:whp}

So far, all expectations are taken over the full probability space of random graphs and random walks, simultaneously generated. In particular, the expected cover time is the average cover time of all cubic multigraphs. In this section we prove that almost all cubic multigraphs have the same cover time.

Let $\GG$ denote the set of $3$-regular (multi)graphs. For $G\in \GG$ and a random variable $X$, write
$$
\Ex{G}{X} = \E{X\mid G},
$$
so that, as $G\in \GG$ is chosen uniformly at random,
$$
\E{X} = \frac{1}{|\GG|} \sum_{G\in \GG} \Ex{G}{X}.
$$
We prove the following lemma. Define $\d_2 = \log^{-2}n$ and $t_2 = (1-\d_2)\frac{3n}{2}$.
\begin{lemma}\label{lem:whp}
Let $t > t_2$. If $G\in \GG$ is chosen uniformly at random, then with high probability,
$$
\Ex{G}{C(t)} = \left(\frac32\pm\e\right)n\log \bfrac{3n}{3n-2t+1} + o(n\log n).
$$
\end{lemma}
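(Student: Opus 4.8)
The plan is to upgrade the statement ``$\E{C(t)} \approx \frac32 n\log(3n/(3n-2t+1))$'' (which is Lemma \ref{lem:Cstar}, restricted to $t > t_2$) from a statement about the average over $\GG$ to a statement that holds for almost every $G \in \GG$. The natural tool is a second-moment / concentration argument, but computing $\E{C(t)^2}$ directly looks painful, so instead I would exploit the structure already built in the preceding sections: the quantity $\Ex{G}{C(t)}$ is, up to the $o(n\log n)$ error, determined by the trajectory of the random variables $X_i(t)$, and those are concentrated \emph{for the random graph together with the random walk}. The idea is to split $\E{C(t)} = \sum_G \Pr[G]\Ex{G}{C(t)}$ and argue that the $o(n\log n)$ slack already proved in Lemma \ref{lem:Cstar}, combined with a Markov-type inequality applied to the \emph{non-negative deviation} $\Ex{G}{C(t)} - (\text{lower bound})$, forces all but a $o(1)$-fraction of graphs to lie in the claimed window.

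Concretely, here are the steps. First, establish a \emph{deterministic} lower bound $\Ex{G}{C(t)} \geq (\frac32 - \e)n\log(3n/(3n-2t+1)) - o(n\log n)$ that holds for \emph{every} connected cubic $G$ (not just w.h.p.): this follows because each new edge requires at least one walk step, and more precisely from the hitting-time lower bound in \eqref{eq:js} run in reverse — at time $t$ the set $\ol X(t)$ has size $3n-2t$, and any walk, regardless of $G$, needs in expectation at least $\frac{n}{|\ol X(t)|}(1-o(1))$ steps to hit it once the chain is mixed; since $\ol X(t) = 3n-2t$ deterministically, summing gives the bound with only the $\l \le 0.99$ eigenvalue input, which holds w.h.p. over $G$ but for the conditioning graphs we simply restrict to the high-probability event from Lemma \ref{lem:Grproperties}. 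Second, let $\GG'\subseteq\GG$ be the set of $G$ satisfying the conclusions of Lemma \ref{lem:Grproperties} and additionally the ``typical structural behaviour'' needed in Sections \ref{sec:expansionviaconcentration}--\ref{sec:expansionwithoutconcentration}; then $|\GG'|/|\GG| = 1-o(1)$. Third — this is the crux — show $\Ex{G}{C(t)} \leq (\frac32+\e)n\log(3n/(3n-2t+1)) + o(n\log n)$ for all $G\in\GG''$ where $\GG''\subseteq\GG'$ is a further $1-o(1)$ subset. Here I would revisit the proof of Lemma \ref{lem:increment}: the bound $\E{C(r+1)-C(r)} = (3\pm\e)\frac{n}{3n-2r} + O(\log n)$ was obtained via $\Pr[\EE(r)] = 1-o(1)$, and the events $\AA(r),\BB(r)$ defining $\EE(r)$ are measurable with respect to the pair (graph, walk); by Fubini/Markov, for a $1-o(1)$ fraction of $G$ the conditional probability $\Pr[\EE(r)\mid G]$ is $1-o(1)$ for all $r$ in the relevant range simultaneously (union-bounding over the $O(n)$ values of $r$, using that the failure probability is $o(1/n)$ after choosing $\om$ appropriately, or handling the polynomially-many ``bad'' increments separately via the crude $O(n/(3n-2r)) + O(\log n)$ bound which is valid for every $G\in\GG'$). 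Summing the per-step bounds over $r$ from $0$ to $t-1$ and absorbing the contribution of the initial segment $r \le t_1$ via Lemma \ref{lem:phaseone} (which, being itself an average bound, must also be converted — but $\E{C(t_1)} = o(n\log n)$ plus Markov gives $\Ex{G}{C(t_1)} = o(n\log n)$ for a $1-o(1)$ fraction of $G$) yields the upper bound. Finally, intersect the lower-bound event ($G\in\GG'$), the upper-bound event ($G\in\GG''$), and connectivity; the result holds on a set of measure $1-o(1)$.

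The main obstacle I anticipate is Step 3: the per-increment events $\EE(r)$ have failure probabilities that are $o(1)$ but not obviously $o(1/\text{poly}(n))$, so a naive union bound over all $r \le t$ does not immediately give that $\Pr[\exists r : \neg\EE(r) \mid G]$ is small for typical $G$. The fix is to separate the increments into ``cheap'' ones (where even the trivial bound $O(n/(3n-2r))$ from \eqref{eq:js} is affordable because $3n-2r$ is not too small, i.e. $r \le t_2$ is excluded by hypothesis anyway, and for $t_2 < r \le t$ the number of such $r$ is $O(n/\log^2 n)$ so each can afford an $O(n/\log^2 n)$-size contribution) and ``expensive'' ones, and to note that the expected \emph{number} of bad increments is $o(n / \log^2 n)$ by linearity, so by Markov a $1-o(1)$ fraction of graphs have $o(n/\log^2 n)$ bad increments, each contributing at most $O(n)$ — wait, that's too lossy; more carefully, a bad increment at ``time'' $r$ contributes at most $O(n/(3n-2r)) + O(\log n) \le O(n/\log^{11}n)$ in the range $r \le t_4$ and $O(\log^{12}n)$ beyond (by Section \ref{seclastedges}), so $o(n/\log^2 n)$ of them total $o(n^2/\log^{13}n) = o(n\log n)$. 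Getting this bookkeeping airtight — in particular verifying that the failure probabilities really do sum to $o(1)$ across the union of all $r$, or else isolating the small bad set correctly — is where the real work lies; everything else is repackaging estimates already established.
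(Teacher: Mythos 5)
Your high-level strategy matches the paper's: split off a $o(1)$-measure bad set of graphs, then control the contribution of ``bad'' increments (those where $\EE(s)$ fails to have conditional probability $1-o(1)$) by a Markov-type averaging argument. The paper also introduces the sets you anticipate (graphs failing the eigenvalue bound, graphs with atypical $X_1$-trajectory, graphs with large $\Ex{G}{C(t_2)}$), uses Markov on $\E{C(t_2)}=o(n\log n)$ exactly as you propose, and observes that $\Prob{\EE(s)}=1-o(1)$ forces $\Prob{\EE(s)\mid G}=1-o(1)$ for a $1-o(1)$ fraction of $G$ at each fixed $s$.

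The gap is in your closing bookkeeping, and you half-sense it yourself. You propose to bound the number of bad increments by $o(n/\log^2 n)$ and then multiply by the worst-case per-increment cost $O(n/\log^{11}n)$, arriving at $o(n^2/\log^{13} n)$, which you assert equals $o(n\log n)$. It does not: $n^2/\log^{13}n$ exceeds $n\log n$ by a factor of $n/\log^{14}n\to\infty$. The count-times-max-cost bound is structurally too lossy, because it pretends every bad increment sits at the tail where the cost $n/(3n-2s)$ is largest. Markov on the \emph{count} of bad increments would only close if the per-$s$ failure probability were $o(n^{-1}\mathrm{polylog}\,n)$, which is not available from $\Prob{\ol{\EE(s)}}=o(1)$ alone.

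The paper's fix is to apply Markov directly to the \emph{weighted} sum $\sum_{s\in T_G}\frac{n}{3n-2s}$ rather than to the count. Averaging over $G$ gives
$$
\frac{1}{|\GG|}\sum_{G\in\GG}\sum_{s\in T_G}\frac{n}{3n-2s}
=\sum_s \frac{|\FF_s'|}{|\GG|}\cdot\frac{n}{3n-2s}
\le \om^{-2}\sum_s\frac{n}{3n-2s}
= O\left(\om^{-2}\, n\log\bfrac{3n}{3n-2t+1}\right),
$$
because each term in the outer sum already carries the correct weight $n/(3n-2s)$; one then Markov-bounds the probability that $\sum_{s\in T_G}n/(3n-2s)\ge \om^{-1}n\log(\cdot)$ by $O(\om^{-1})=o(1)$. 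This absorbs the bad increments into the allowed $o(n\log n)$ slack without ever multiplying a count by a worst-case cost. Also, the paper defines $T_G$ via the symmetric deviation $|\Ex{G}{\D(s)}-3n/(3n-2s)|\ge \e n/(3n-2s)$, handling both directions at once; your separate deterministic lower bound via hitting times is not quite right since \eqref{eq:js} gives only an upper bound on $\Ex{\pi}{H(S)}$, not a matching lower bound.
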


\begin{proof}[Proof of Lemma \ref{lem:whp}]
Fix some $t > t_2$. We define the following subsets of $\GG$, with $\e > 0$ arbitrary and $\om$ to be defined shortly,
\begin{align}
\HH & = \left\{G\in \GG : \left|\Ex{G}{C(t) - C(t_2)} - \sum_{s=t_2}^{t-1}\frac{3n}{3n-2s}\right| \geq \e n\log \bfrac{3n}{3n-2t+1}\right\}, \\
\JJ & = \left\{G\in \GG : \Ex{G}{C(t_2)} \geq \frac{n\log n}{\om}\right\}, \\
\KK & = \left\{G\in \GG : \max_{t_1\leq t\leq t_4} \frac{|X_1(t) - (3n-2t)|}{3n-2t} \geq \e\right\}, \\
\LL & = \left\{G \in \GG : \l(G) > 0.99\right\}.
\end{align}
We will show that asymptotically, the union of these four sets has size $o(|\GG|)$. In particular, almost all $G\in\GG$ are in $\ol{\HH\cup \JJ\cup \LL}$, which implies that w.h.p. $\Ex{G}{C(t)}$ has the desired value.

It follows from Lemma \ref{lem:Grproperties} that $|\LL|/|\GG| = o(1)$. Also, Lemma \ref{lem:secondmoment} proved below shows that $|\KK|/|\GG| = o(1)$.

In Section \ref{sec:phaseone} (see \eqref{onlogn}) we show that $\E{C(t_1)} = o(n\log n)$. From Lemma \ref{lem:expansionviaconcentration} and \eqref{eq:condincrement},
$$
\E{C(t_2)} = \E{C(t_1)} + O\left(\sum_{t = t_1}^{t_2}\frac{3n}{3n-2t}\right) = o(n\log n) + O\left(n\log\bfrac{\d_1}{\d_2}\right) = o(n\log n).
$$
So $\E{C(t_2)} \leq \om^{-2}n\log n$ for some $\om$ tending to infinity. Then
$$
\frac{n\log n}{\om^2} \geq \E{C(t_2)} = \frac{1}{|\GG|}\sum_{G\in\GG} \Ex{G}{C(t_2)} \geq \frac{|\JJ|}{|\GG|} \frac{n\log n}{\om},
$$
which implies that $|\JJ|/|\GG| \leq \om^{-1} = o(1)$. Define $\GG' = \GG\setminus(\JJ\cup \KK\cup \LL)$, and $\HH' = \HH\cap \GG'$. We will show that $|\HH'|/|\GG'| = o(1)$, which will imply that $|\HH|/|\GG| = o(1)$.

Write $\D (s) = C(s+1) - C(s)$. Define for $G\in\GG'$,
$$
T_G = \left\{t_1 \leq s < t : \left|\Ex{G}{\D (s)} - \frac{3n}{3n-2s}\right| \geq \frac{\e n}{3n-2s}\right\}.
$$
Define, for some $\om$ tending to infinity with $n$,
$$
\FF' = \left\{G \in \GG' : \sum_{s\in T_G} \frac{n}{3n-2s} \geq \frac{1}{\om} n\log \bfrac{3n}{3n-2t+1}\right\}.
$$
If $G\in\GG'$ then $\Ex{G}{\D(s)} \leq (1-\l(G))^{-1} n/|X_1| \leq 100n/(3n-2s)$ for all $s$, and so if $G\in \GG'\setminus \FF'$,
\begin{align}
\left|\Ex{G}{C(t) - C(t_1)} - \sum_{s=t_1}^{t-1}\frac{3n}{3n-2s}\right|
& \leq \e\sum_{s\notin T_G}\frac{n}{3n-2s} + \sum_{s\in T_G} \left|\Ex{G}{\D(s)} - \frac{3n}{3n-2s}\right| \\
& \leq \frac12\e n\log\bfrac{n}{3n-2t+1} + 103\sum_{s\in T_G} \frac{n}{3n-2s} \\
& \leq\e n\log \bfrac{3n}{3n-2t+1}.
\end{align}

In particular, $\HH'\subseteq \FF'$, and it remains to argue that $|\FF'| / |\GG'| = o(1)$.

For $t_1 \leq s < t$ let $\FF_s'\subseteq \GG'$ denote the set of graphs $G$ with $s\in T_G$. Since $\Prob{\EE(s)} = 1 - o(1)$ (Lemmas \ref{lem:expansionviaconcentration} and \ref{lem:expansionwithoutconcentration}), almost all $G\in \GG'$ are such that $\Prob{\EE(s) \mid G} = 1-o(1)$. For such a $G$ we have (as in \eqref{eq:condincrement} with $\e$ replaced by $\e/2$)
\begin{align}
\Ex{G}{\D(s)} & = \brac{3\pm\frac{\e}2}\frac{n}{3n-2s}\Prob{\EE(s)\mid G} + O\left(\frac{1}{1-\l}\frac{n}{3n-2s}\right)\Prob{\ol{\EE(s)}\mid G} + O(\log n) \\
& = (3\pm \e)\frac{n}{3n-2s}, \label{eq:EDs}
\end{align}
and so $s\notin T_G$. Note that we have used $s\geq t_2$ here, in order to eliminate the $O(\log n)$ term. This shows that $|\FF_s'| / |\GG'| = o(1)$ for each $t_1 \leq s < t$. Let $\om$ tending to infinity be such that $|\FF_s'|/|\GG'| \leq \om^{-2}$ for all $s$. So,
\begin{align}
\frac{1}{|\GG|} \sum_{G\in\GG}\sum_{s\in T_G}\frac{n}{3n-2s} = \sum_s \frac{|\FF_s|}{|\GG|} \frac{n}{3n-2s} \leq \frac{3}{2\om^2} n\log\bfrac{3n}{3n-2t+1}.
\end{align}
But by definition of $\FF'$,
$$
\frac{1}{|\GG|}\sum_{G\in\GG}\sum_{s\in T_G} \frac{n}{3n-2s} \geq \frac{|\FF'|}{|\GG|} \frac{1}{\om} n\log \bfrac{3n}{3n-2t+1}
$$
and we conclude that $|\FF'| / |\GG| \leq \frac32\om^{-1} = o(1)$. This finishes the proof.
\end{proof}

\subsection{The vertex cover time}\label{sec:vertexcover}

Using Lemma \ref{lem:concentration} (\ref{lem:concentration:X3}) we can express the partial vertex cover time in terms of the partial edge cover time.
\begin{lemma}
Let $n - n\log^{-1} n \leq s \leq n$ be fixed. Then with high probability, $G$ is such that
\begin{equation}\label{vcover}
 \Ex{G}{C_V(s)} = (1\pm \e)n\log\bfrac{n}{n-s+1}.
\end{equation}
\end{lemma}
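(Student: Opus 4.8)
The plan is to express $C_V(s)$ exactly as an edge cover time evaluated at a random stopping time, and then to reuse the edge estimate already established. The key observation is that at edge-time $t$ the walk $W(t)$ spans exactly $n-X_3(t)$ vertices: a vertex becomes spanned precisely when one of its incident edges is first traversed, i.e.\ when it leaves $X_3$. Since $X_3(\cdot)$ is non-increasing in $t$, this yields the exact identity
$$
C_V(s)=C(\s(s)),\qquad \s(s):=\min\set{t:X_3(t)\le n-s}.
$$
Together with the edge estimate $\E{C(t)}=\brac{\tfrac32\pm\e}n\log\bfrac{3n}{3n-2t+1}+o(n\log n)$, which (by Lemmas \ref{lem:phaseone}, \ref{lem:increment} and \ref{lem:lastedges}) holds for every $t_1\le t\le\tfrac{3n}2$, the task reduces to locating $\s(s)$ and pushing the estimate through the stopping time.

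To locate $\s(s)$ I would split on the size of $n-s$. If $n-s\ge\log^6 n$, then $\d^\star:=\bfrac{n-s}{n}^{2/3}$ lies well inside the window where Lemma \ref{lem:concentration}(\ref{lem:concentration:X3}) gives, with high probability, $X_3(t)=(1\pm\om^{-1/2})n\d(t)^{3/2}$. Letting $\s^-,\s^+$ be the deterministic times with $\d(\s^\mp)=\d^\star(1\pm\om^{-1/4})$, monotonicity of $X_3$ forces $X_3(\s^-)>n-s>X_3(\s^+)$ w.h.p., hence $\s^-\le\s(s)\le\s^+$ and $C(\s^-)\le C_V(s)\le C(\s^+)$ w.h.p. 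Since $\log\bfrac{3n}{3n-2\s^\pm+1}=\tfrac23\log\bfrac{n}{n-s+1}+O(\om^{-1/4})+o(\log n)$, the edge estimate at $\s^\pm$ reads $\brac{1\pm\e}n\log\bfrac{n}{n-s+1}+o(n\log n)$. If instead $n-s\le\log^6 n$, the concentration of $X_3$ degrades but is no longer needed: using the ``$X_3=0$'' clause of Lemma \ref{lem:concentration}(\ref{lem:concentration:X3}) and monotonicity, $\s(s)$ lies w.h.p.\ between $t_3':=(1-2\d_3)\tfrac{3n}2$, where $X_3\approx 2^{3/2}\log^6 n>n-s$, and $\tau:=(1-\om^{-1}n^{-2/3})\tfrac{3n}2$, where $X_3=0$; both $\E{C(t_3')}$ and $\E{C(\tau)}$ equal $(1+o(1))n\log n$, and $n\log\bfrac{n}{n-s+1}=(1+o(1))n\log n$ throughout this range, so the sandwich again gives $\brac{1\pm\e}n\log\bfrac{n}{n-s+1}+o(n\log n)$.

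Turning the high-probability sandwich into a bound on $\E{C_V(s)}$ requires controlling the overshoot via a telescoping sum,
$$
\brac{C_V(s)-C(\s^+)}\ind{\s(s)>\s^+}=\sum_{t>\s^+}\brac{C(t)-C(t-1)}\ind{X_3(t-1)>n-s},
$$
and the undershoot at $\s^-$ symmetrically. Taking $\E{\cdot\mid\HH(t-1)}$ inside, each increment is at most $O\bfrac{n}{3n-2t}+O(\log n)$ deterministically --- by \eqref{eq:js}, the bound $|\ol X(t)|\ge(3n-2t)/3$, the $O(\log n)$ mixing cost, and $\l(G)\le 0.99$ --- while $\Prob{X_3(t-1)>n-s}=o(1)$ uniformly over $t>\s^+$ (from the concentration in the first regime, from the ``$X_3=0$'' clause in the second). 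Hence the overshoot has expectation $o(1)\cdot O(n\log n)=o(n\log n)$, giving $\E{C_V(s)}=\brac{1\pm\e}n\log\bfrac{n}{n-s+1}+o(n\log n)$.

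Finally, to replace $\E{\cdot}$ by $\Ex{G}{\cdot}$ for almost every $G$ --- which is what the lemma asks --- I would rerun the argument of Section \ref{sec:whp} at the \emph{fixed} times $\s^\pm,t_3',\tau$ rather than at the random $\s(s)$: for almost every $G$, Lemma \ref{lem:concentration}(\ref{lem:concentration:X3}) and Markov's inequality over $\GG$ make the events $\set{X_3(\s^-)>n-s}$ and $\set{X_3(\s^+)<n-s}$ hold conditionally with probability $1-o(1)$; the edge estimate holds for $\Ex{G}{C(\s^\pm)}$ (and $\Ex{G}{C(t_3')},\Ex{G}{C(\tau)}$) by a mild extension of Lemma \ref{lem:whp} to $t\in[t_1,\tfrac{3n}2]$, which needs no new idea since the per-step $O(\log n)$ errors sum to $O(n\d_1\log n)=o(n\log n)$ over that range; and the $\om$-averaging device of Lemma \ref{lem:whp} handles the $O(n)$ relevant values of $t$ simultaneously. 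I expect this last bookkeeping to be the main obstacle: reconciling the random stopping time with the per-graph expectation, given that Lemmas \ref{lem:concentration} and \ref{lem:whp} are stated only for fixed $t$ and with $o(1)$ failure probability. A secondary delicate point is the endpoint $s\approx n-n/\log n$, where $n\log\bfrac{n}{n-s+1}\asymp n\log\log n$ is itself $o(n\log n)$, so the inherited $o(n\log n)$ error is harmless only if the conclusion is read as in Theorem \ref{thm:whpcover}, i.e.\ with an additive $o(n\log n)$ (equivalently, one would want the sharper $\E{C(t_1)}=O(n\log\log n)$ in place of Lemma \ref{lem:phaseone}).
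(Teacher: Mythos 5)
Your proposal is correct and follows essentially the same route as the paper: express $C_V(s)=C(\tau_s)$ with $\tau_s$ a stopping time defined by $X_3$, sandwich $\tau_s$ between deterministic times $\tau_s^{\pm}$ using Lemma~\ref{lem:concentration}(\ref{lem:concentration:X3}), evaluate the per-graph edge estimate at those fixed times, and use $\log(3n/(3n-2\tau_s^\pm+1))\approx\tfrac23\log(n/(n-s+1))$. Your telescoping treatment of the overshoot (conditioning on $\HH(t-1)$, using monotonicity of $X_3$ and the per-step bound from \eqref{eq:js}) is somewhat more careful than the paper's, which simply bounds the tail contribution by $\Prob{\tau_s>\tau_s^+}\cdot O(n\log n)$; and your closing remark about the endpoint $s\approx n-n/\log n$ correctly identifies that the lemma as written drops the additive $o(n\log n)$ error present in both the proof and in Theorem~\ref{thm:whpcover}.
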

\begin{proof}
As $X_3(t)$ is the set of undiscovered vertices at time $C(t) - 1$, we can write $C_V(s) = C(\t_s)$, where
$$
\t_s = \min\left\{t : |X_3(t)| = n - s\right\}.
$$
Define $\d_s = (1 - (s-1)/n)^{2/3}$. Let $\t_s^- = (1-\d_s\om)\frac{3n}{2}$ for some $\om = O(\log\log n)$ tending to infinity with $n$. By Lemma \ref{lem:concentration}, with high probability
$$
|X_3(\t_s^-)| \approx n\left(\d_s \om\right)^{3/2} = n\left(1-\frac{s-1}{n}\right)\om^{3/2} \approx (n-s)\om^{3/2} \gg n-s,
$$
so $\t_s > \t_s^-$ with high probability. By a similar calculation, if $\t_s^+ = (1-\d_s\om^{-1})\frac{3n}{2}$ then $\t_s < \t_s^+$ with high probability. Lemma \ref{lem:concentration} implies that $t_2< \t_s\leq \frac{3n}{2}$ w.h.p. and so Section \ref{sec:whp} implies that w.h.p., $G$ is such that $\Ex{G}{C(\t_s^\pm)} \approx \frac32 n\log(3n/(3n-2\t_s^\pm + 1))$. So,
\begin{align}
\Ex{G}{C_V(s)} & = \Ex{G}{\Ex{G}{C(\t_s) \mid \t_s}} \\
 & \leq \Ex{G}{\left(\frac32 + \e\right)n\log\bfrac{3n}{3n-2\t_s+1}} \\
& \leq \Prob{t \leq \t_s^+} \left(\frac32 + \e\right)n\log\bfrac{3n}{3n-2\t_s^++1} + \Prob{t > \t_s^+}\left(\frac32 + \e\right) n\log n \\
& = \left(\frac32 + \e\right)n\log\bfrac{3n}{3n-2\t_s^+ + 1} + o(n\log n)
\end{align}
Now, as $3n-2\t_s^+ = 3n\d_s\om$,
\begin{align}
\log\bfrac{3n}{3n-2\t_s^+ + 1} = (1+o(1))\log\bfrac{1}{\d_s\om} = \left(\frac23+o(1)\right)\log\bfrac{n}{n-s+1}.
\end{align}
The lower bound for $\E{C_V(s)}$ is found similarly.
\end{proof}
Substituting $s=n$ into \eqref{vcover} gives us \eqref{vertexcover}.
\section{Concentration: Proof of Lemma \ref{lem:concentration}}\label{sec:concentration}

In this section we prove bounds for $X_1(t), X_3(t), \F(t)$, collected in Lemma \ref{lem:concentration}. The following lemma contains a proof of part (i).
\begin{lemma}\label{lem:secondmoment}
Let $\d_3 = n^{-2/3}\log^4 n$, setting $t_3 = (1-\d_3)\frac{3n}{2}$. Then
\begin{align}
\Prob{\exists 1\leq t\leq t_3 : X_3(t) > \frac54 n\d^{3/2}} & = o(1).
\end{align}
Let $\om$ tend to infinity with $n$. Then for any $\om^{-1} \geq \d \geq \om n^{-2/3}$,
$$
\Prob{|X_3(t) - n\d^{3/2}| > \frac{n\d^{3/2}}{\om^{1/2}}} = o(1),
$$
and for any $\d \leq \om^{-1}n^{-2/3}$,
$$
\Prob{X_3(t) > 0} = o(1).
$$
\end{lemma}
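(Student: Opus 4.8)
The plan is to establish concentration of $X_3(t)$ around its mean $\E{X_3(t)} \approx n\d^{3/2}$ (computed in \eqref{EX3}) via a second moment argument, and then to upgrade this pointwise bound to a uniform bound over all $t \le t_3$ using a union bound together with the one-sided upper tail. First I would set up a recurrence for the second moment $\E{X_3(t)^2}$, or equivalently for the factorial moment $\E{X_3(t)(X_3(t)-1)}$, in the spirit of the first-moment recurrence already derived for $\E{X_3(t)}$. The quantity $X_3(t)$ counts vertices all three of whose configuration points are unpaired at time $t$ (equivalently, none of whose edges have been traversed); when we expose one more configuration point and pair it, a vertex leaves $X_3$ precisely if one of its three points is the newly paired point or its partner. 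So conditionally on $\HH(t)$, each pair of distinct vertices $u,v \in X_3(t)$ survives to time $t+1$ with a probability of the form $1 - \frac{6}{3n-2t+1} + O\brac{\frac{1}{n^2}}$, and this yields
\begin{equation}
\E{X_3(t+1)(X_3(t+1)-1)\mid \HH(t)} = \brac{1 - \frac{6}{3n-2t+1}}X_3(t)(X_3(t)-1) + (\text{lower order}).
\end{equation}
Iterating, $\E{X_3(t)(X_3(t)-1)} = (1+o(1))\E{X_3(t)}^2$, so $\Var{X_3(t)} = o\brac{\E{X_3(t)}^2}$ provided $\E{X_3(t)} \to \infty$, which holds exactly when $n\d^{3/2} \to \infty$, i.e. $\d \gg n^{-2/3}$.

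Given the variance bound, Chebyshev's inequality gives, for fixed $t$ with $\om^{-1} \ge \d \ge \om n^{-2/3}$,
\begin{equation}
\Prob{|X_3(t) - \E{X_3(t)}| > \tfrac{n\d^{3/2}}{\om^{1/2}}} \le \frac{\Var{X_3(t)}}{(n\d^{3/2}/\om^{1/2})^2} = \frac{o(n\d^{3/2})^2 \cdot \om}{(n\d^{3/2})^2} = o(\om),
\end{equation}
which is not quite good enough as stated; the fix is to choose the error parameter in the Chebyshev bound more generously, say $n\d^{3/2}/\om_1^{1/2}$ where $\om_1 \to \infty$ much more slowly than the $\om$ governing the range of $\d$, so that $\om_1 / (n\d^{3/2}/\om^{1/2})^2 \cdot \Var{X_3}$ is genuinely $o(1)$; since $\E{X_3(t)} = n\d^{3/2}(1+o(1))$ agrees with $n\d^{3/2}$ to within the stated tolerance, replacing $\E{X_3(t)}$ by $n\d^{3/2}$ costs nothing. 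For the regime $\d \le \om^{-1}n^{-2/3}$, I would instead use the first moment alone: $\E{X_3(t)} = n\d^{3/2}(1+o(1)) \le \om^{-3/2}(1+o(1)) \to 0$, so Markov's inequality gives $\Prob{X_3(t) > 0} = \Prob{X_3(t) \ge 1} \le \E{X_3(t)} = o(1)$. (Strictly, $X_3(t)$ at non-integer or extremal times must be interpreted via the monotonicity $X_3(t) \ge X_3(t')$ for $t \le t'$, so it suffices to bound $X_3$ at the single time $t = \om^{-1} n^{-2/3}\cdot \tfrac{3n}{2}$.)

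For the uniform upper bound $\Prob{\exists\, 1 \le t \le t_3 : X_3(t) > \tfrac54 n\d^{3/2}} = o(1)$, I would exploit monotonicity of $X_3$ and a union bound over a sparse net of times. Since $X_3(t)$ is non-increasing in $t$ while $n\d(t)^{3/2} = n\brac{(3n-2t)/3n}^{3/2}$ is also non-increasing, it suffices to check a geometric sequence of times $t^{(0)} < t^{(1)} < \cdots$ chosen so that $\d(t^{(j+1)})/\d(t^{(j)})$ is a constant close to $1$ (say $\d$ drops by a factor between $(4/5)^{2/3}$ and $1$ across consecutive net points); there are $O(\log n)$ such points before $\d$ reaches $\d_3$. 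At each net point the two-sided concentration above (or just the one-sided Chebyshev upper tail) gives failure probability $o(1/\log n)$ after choosing the error parameters appropriately — here one needs the variance bound to beat $\log n$, which it does since $\Var{X_3}/\E{X_3}^2 = O(1/(n\d^{3/2})) = O(\om^{-1})$ can be made $o(1/\log^2 n)$ by taking $\om$ a suitable power of $\log\log n$ or by restricting attention to $\d \ge \d_3 = n^{-2/3}\log^4 n$, where $n\d^{3/2} \ge \log^6 n$. Between consecutive net points, monotonicity controls $X_3$: if $t^{(j)} \le t \le t^{(j+1)}$ then $X_3(t) \le X_3(t^{(j)}) \le \tfrac54 n\d(t^{(j)})^{3/2} \le \tfrac54 \cdot (5/4) \cdot n\d(t)^{3/2}$... — so one should actually use a slightly smaller constant at the net points (e.g. $\tfrac{11}{10}$) so that the monotonicity slack still lands below $\tfrac54$.

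The main obstacle I anticipate is the second-moment computation: one must control the covariance between the survival indicators of two vertices $u, v \in X_3(t)$ carefully enough to see that the pair-survival probability factorizes up to $O(1/n^2)$ corrections and that the accumulated lower-order terms over $t \le t_3$ steps do not swamp $\E{X_3(t)}^2 = n^2\d^3$ — delicate precisely when $\d$ is near $\d_3 = n^{-2/3}\log^4 n$, where $\E{X_3}^2$ is only polylogarithmically large. A secondary subtlety is the bookkeeping of which single representative time to use when $t$ is not an integer multiple of $\tfrac{3n}{2}$ and near the endpoints of each $\d$-range; monotonicity of $X_3$ handles this but must be invoked explicitly. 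Everything else is a routine iteration of recurrences of the type already carried out in the $\E{X_i(t)}$ section.
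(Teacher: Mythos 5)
Your overall plan is sound and lands close to the paper's proof: the paper also computes factorial moments $\E{(X_3(t))_k}$ (there via the exact survival product $\Prob{U\subseteq X_3(t)}\le\d^{3k/2}$ rather than a step-by-step recurrence, but the two are equivalent), applies Chebyshev for the two-sided bound, and uses Markov for the $\d\le\om^{-1}n^{-2/3}$ regime. The genuine departure is the uniform upper bound: the paper bounds the moment generating function, $\E{e^{\th X_3(t)}}\le\exp\{\th n\d^{3/2}(1+o(1))\}$, and with $\th=2/\log n$ gets a failure probability $o(n^{-1})$ at every $t\le t_3$, so a naive union over all $O(n)$ times suffices; you instead combine monotonicity of $X_3(t)$ with a geometric net of $O(\log n)$ time points and Chebyshev at each. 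Your route works: with $\Var{X_3}=O(\E{X_3})$ the per-point failure probability at $\d\ge\d_3$ is $O(1/(n\d^{3/2}))\le O(\log^{-6}n)=o(\log^{-1}n)$, and you correctly note that the net-point threshold must be taken slightly below $\tfrac54$ to absorb the slack from $\d(t^{(j)})/\d(t^{(j+1)})$. Your version is more elementary (no MGF), at the price of being tied to the monotonicity of $X_3$; the paper's MGF approach is stronger and more reusable.

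There is one slip worth flagging: for the pointwise Chebyshev bound in the range $\om^{-1}\ge\d\ge\om n^{-2/3}$, the variance estimate $\Var{X_3(t)}=o(\E{X_3(t)}^2)$ that you extract from ``$\E{(X_3)_2}=(1+o(1))\E{X_3}^2$'' is genuinely too weak, and the fix you sketch (introducing a second, slower parameter $\om_1$) does not repair it, because the lemma ties the same $\om$ to both the range of $\d$ and the deviation $n\d^{3/2}/\om^{1/2}$; loosening the deviation only proves a weaker statement. What you actually need, and what your own recurrence delivers once you keep exact constants, is the one-sided inequality
\begin{equation}
\E{(X_3(t))_2}
 = n(n-1)\prod_{j<t}\Bigl(1-\tfrac{6}{3n-2j+1}\Bigr)
 \le n^2\prod_{j<t}\Bigl(1-\tfrac{3}{3n-2j+1}\Bigr)^2
 = \E{X_3(t)}^2,
\end{equation}
since $1-6/a\le(1-3/a)^2$. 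This gives $\Var{X_3(t)}\le\E{X_3(t)}$, and then Chebyshev yields
$\Prob{|X_3-\E{X_3}|>n\d^{3/2}/\om^{1/2}}\le\om\Var{X_3}/(n\d^{3/2})^2 = O(\om/(n\d^{3/2}))=O(\om^{-1/2})=o(1)$
for $\d\ge\om n^{-2/3}$, which is exactly what the lemma asks and what the paper proves. This is also the variance bound you silently invoke later when you write $\Var{X_3}/\E{X_3}^2=O(1/(n\d^{3/2}))$, so your internal account of the variance should be made consistent. With that correction your argument goes through.
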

\begin{proof}
For any set $U$ of vertices with $|U| = k$, using calculations similar to \eqref{EX3},
\begin{align}
\Prob{U\subseteq  X_3(t)} & = \left(1 - \frac{k}{n}\right)\prod_{j=0}^{t-1} \left(1 - \frac{3k}{3n-2j-1}\right) \\
& \leq \bfrac{3n-2t}{3n}^{3k/2} = \d^{3k/2}.
\label{eq:probvinX3}
\end{align}
Here the $1-k/n$ factor accounts for the probability that the starting point of the walk is in $\ol{U}$. For any new edge $(x, y)$ that is added, $U$ can only be entered if the endpoint $y$ which is chosen uniformly at random from all $3n-2j-1$ available configuration points is in $U$. We have, using the notation $(x)_k = x(x-1)\dots(x-k+1)$,
\begin{align}
\E{X_3(t)} & =n\d^{3/2}\brac{1+O\bfrac{1}{\d n}}, \quad\text{ directly from \eqref{EX3}}.\label{bla1}\\
\E{(X_3(t))_2} & \leq n(n-1)\d^3, \quad\text{ from \eqref{eq:probvinX3} with }k=2. \label{bla2}\\
\E{(X_3(t))_k} & \leq (n)_k  \d^{3k/2} \leq n^k \d^{3k/2}\quad\text{ from \eqref{eq:probvinX3} in general.}
\end{align}
Firstly, it follows from \eqref{bla1} and \eqref{bla2} that we have $\Var{X_3(t)}=O(n\d^{3/2})=O(\E{X_3(t)})$, and so Chebyshev's inequality shows that for any $\d \geq\om n^{-2/3}$,
$$
\Prob{|X_3(t) - n\d^{3/2}| > \frac{n\d^{3/2}}{\om^{1/2}}}=O \bfrac{\E{X_3(t)}\om}{n^2\d^3}= \bfrac{\om}{\om^2} = o(1).
$$
Secondly, the Markov inequality and \eqref{bla1} shows that for $\d \leq \om^{-1}n^{-2/3}$,
$$
\Prob{X_3(t) \geq 1} \leq(1+o(1)) n\d^{3/2} = o(1).
$$

As $|X_3(t)|$ takes nonnegative integer values we can for any real $z > 1$, express the binomial theorem as
$$
\E{z^{X_3(t)}} = \sum_{k\geq 0} \frac{\E{(X_3(t))_k} (z-1)^k}{k!} \leq \sum_{k\geq 0} \frac{n^k \d^{3k/2}(z-1)^k}{k!} = \exp\left\{(z-1)n\d^{3/2}\right\},
$$
So for any positive $\th = o(1)$, the moment generating function of $X_3(t)$ satisfies
$$
\E{e^{\th X_3(t)}} \leq \exp\left\{\th n\d^{3/2}(1+o(1))\right\}.
$$
Let $\d \geq \d_3 = n^{-2/3}\log^4 n$. Then with $\th = 2/\log n$,
$$
\Prob{X_3(t) > \frac54n\d^{3/2}} \leq \frac{\E{e^{\th X_3(t)}}}{\exp\left\{\frac54\th n\d^{3/2}\right\}} \leq \exp\left\{-\frac14\th n\d^{3/2}(1-o(1))\right\} = o(n^{-1}).
$$
Summing over $t = 1,2,\dots,t_3$, it follows that $X_3(t) \leq \frac54 n\d^{3/2}$ for all $\d \geq n^{-2/3}\log^4 n$ w.h.p.
\end{proof}

Lemma \ref{lem:secondmoment} implies bounds for $X_1(t)$ via $X_1(t) = 3n-2t-1-2X_2(t) - 3X_3(t)$. Firstly, for $\th > 0$ with $\th = o(1)$, as $X_2(t) \leq 1$,
\begin{equation}\label{eq:X1mgf}
\E{e^{-\th X_1(t)}} \leq e^{-\th(3n-2t+ 2)} \E{e^{3\th X_3(t)}} \leq \exp\left\{-3\th n\d + 3\th n\d^{3/2}(1+o(1))\right\}.
\end{equation}
Secondly, $X_3(t) \leq \frac54 n\d^{3/2}$ holding for all $3n/4 \leq t \leq t_3$ w.h.p. implies that w.h.p.,

\begin{equation}\label{eq:X1lowerbound}
X_1(t) \geq 3n\d\left(1 - \frac54 \d^{1/2}\right)-3, \quad \frac{3n}{4} \leq t \leq t_3.
\end{equation}
For $n^{-1}\log^{11} n = \d_4 \leq \d \leq \d_3 = n^{-2/3}\log^4n$ we have $X_3(t) \leq \frac54 n\d_3^{3/2} = \log^6 n$ w.h.p., so w.h.p.
\beq{X1a}{
X_1(t) = 3n-2t-2X_2(t) - X_3(t) = 3n\d\brac{1-O\bfrac{\log^6n}{\d n}}=3n\d(1-o(1)).
}
Equations \eqref{eq:X1lowerbound} and \eqref{X1a} together imply Lemma \ref{lem:concentration}(ii) and (iii), assuming that $\om=o(\log n)$.


Finally, we prove a lower bound for the number of green edges $\F(t)$.
\begin{lemma}\label{lem:Phiconcentration}
$$
\Prob{\exists t_1\leq t\leq t_3 : \F(t) < (\d\d_0)^{1/2}n} = o(1).
$$
\end{lemma}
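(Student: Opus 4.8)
The plan is to estimate $\E{\F(t)}$ from below by $\Omega((\d_0\d)^{1/2}n)$ --- in fact of order $n\sqrt\d$, so that the factor $\d_0^{1/2}=(\log\log n)^{-1/2}$ is genuine slack --- then to establish a lower-tail concentration bound, and finally to union-bound over the values $t_1\le t\le t_3$ (at most $\tfrac32n$ of them), using $\F(t+1)\ge\F(t)-O(1)$ and the slow variation of $(\d_0\d(t))^{1/2}n$ to reduce, if convenient, to a net of times.

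For the expectation I would work with the number $\Phi_e(t)$ of edges visited exactly once; this is cleaner to account for than $\F(t)$, and $|\F(t)-\Phi_e(t)|\le|X_1(t)|+1=O(n\d)=o((\d_0\d)^{1/2}n)$. A once-visited edge is created when a new edge is discovered and destroyed --- becomes twice-visited --- exactly when it is re-traversed; since any edge is green at most once in its life, $\Phi_e$ rises by $O(1)$ per discovery and falls by exactly one at each green-edge re-traversal, with at most $\tfrac32n$ such falls in total. Re-traversals occur only during the simple-random-walk excursions the biased walk performs between consecutive discoveries (the coupling of Section \ref{sec:hitting}); by Lemmas \ref{lem:treehitting} and \ref{lem:roothitting} with Lemma \ref{lem:concentration}, such an excursion --- started, after $O(\log n)$ mixing steps, from $\pi$ and stopped on hitting $\ol X(t)$ --- has expected length $\approx 3n/|\ol X(t)|\approx 1/\d$, and a step of it sitting at $v\in X_0$ destroys a green edge with probability $g_v/3$, where $g_v$ is the number of once-visited edges at $v$. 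Since $\sum_v g_v(t)=2\Phi_e(t)$ and $g_v=2$ for each $v\in X_1(t)$ (such a vertex has been visited once and neither of its two non-red edges can have been re-traversed without a revisit), one gets the easy bound $\Phi_e(t)\ge|X_1(t)|\approx 3n\d$, which is short of $(\d_0\d)^{1/2}n$.

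The gap is closed by the following observation, which is the heart of the matter. A naive balance --- a stationary excursion destroys of order $\Phi_e(t)/n$ green edges per step, so $\approx\tfrac1\d\cdot\tfrac{\Phi_e(t)}{n}$ per discovery, to be set against $\Theta(1)$ created --- would only give $\Phi_e(t)=\Theta(n\d)$. But the vertices a long excursion revisits most often are exactly those whose green edges have already been turned blue ($g_v$ shrinks fast at a vertex visited many times), so an excursion of length $\approx 1/\d$ spends all but a vanishing fraction of its steps on an essentially fully-blue core and only rarely reaches the green fringe. Quantifying this saturation --- via the visit-count profile of an expander random walk of length $1/\d$ and the estimates $|X_1(t)|\approx 3n\d$, $|X_3(t)|\approx n\d^{3/2}$ of Lemma \ref{lem:concentration} --- cuts the effective destruction rate down enough to keep $\Phi_e(t)$, hence $\F(t)$, of order $n\sqrt\d$. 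This is the step I expect to be the main obstacle.

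Concentration can then be obtained by a second-moment estimate for $\F(t)=\sum_e\ind{e\text{ visited exactly once}}$, along the lines of the computation for $X_3(t)$ in Lemma \ref{lem:secondmoment} and exploiting that distinct configuration edges behave nearly independently; alternatively one conditions on the reduced structure $[W(t)]$ and uses the P\'olya-urn representation of Lemma \ref{lem:uniform}, or runs a bounded-differences argument on the step-by-step exposure of $\m$ and the walk (checking that a single exposure perturbs $\F$ by little). Any of these, giving $\Prob{\F(t)<(\d_0\d)^{1/2}n}=o(1/n)$, combines with the union bound of the first paragraph to finish the proof.
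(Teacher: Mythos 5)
The heart of your proposal rests on a miscalculation. You claim the ``naive balance'' --- gain $\Theta(1)$ green edges per discovery, lose $\approx \Phi_e(t)/(n\d)$ per discovery --- only produces $\Phi_e(t)=\Theta(n\d)$, and you then invent a ``saturation'' mechanism (long excursions concentrating on a blue core, so the effective destruction rate drops) to close the gap. But the fixed-point balance you compute is not the relevant quantity, because $\F(t)$ is never in equilibrium with the time-varying coefficient $1/(3n-2t)$. Writing $m=3n-2t$ the heuristic recursion is $\E{\F(t+1)-\F(t)}\approx 1-\F(t)/m$, i.e.\ $d\F/dm\approx -\tfrac12+\F/(2m)$, with initial condition $\F(m_0)\gtrsim Y(t_0)\approx m_0$ at $\d_0=1/\log\log n$. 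Solving gives $\F\approx 2\sqrt{m_0m}-m\sim n\sqrt{\d_0\d}$ for $\d\ll\d_0$. So the ``naive balance,'' done as a difference equation starting from the correct initial value (which you already have in hand from $\F(t_0)\ge |Y(t_0)|$), is \emph{already} enough; the whole ``saturation'' story is not needed, and nothing in the paper's proof (or in the structure of the walk via Lemma \ref{lem:uniform}) supports it. This is where your proposal has a genuine gap: you identify the main obstacle in the wrong place and propose to fill it with a visit-profile argument you do not supply.

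What the paper actually does is make the above heuristic exact and carry concentration along for free in a single moment-generating-function recursion. Conditioning on $[W(t)]$ and using the P\'olya-urn description of Lemma \ref{lem:uniform}, the number of green edges destroyed in the excursion after the $t$-th discovery is stochastically dominated by a geometric random variable with success probability $Y(t)/\F(t)$ (Claim \ref{cl:mgf}). Feeding this into $f_t(\th)=\E{e^{-\th\F(t)}\ind{t}}$ gives $f_{t+1}(\th)\le f_t\bigl(\th(1-(1+\e)/(3n-2t))\bigr)+\eta$, whose unrolling yields exactly the $\bigl((3n-2t)/(3n-2t_0)\bigr)^{1/2}$ factor, i.e.\ the square-root decay, together with a tail bound of order $\exp\{-\th n(\d\d_0)^{1/2}\}$ that is $o(1/n)$ for $\th=n^{-2/3}\log^2 n$. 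Your three alternative concentration suggestions are weaker than this: a bare second-moment/Chebyshev bound gives only a tail of order $1/(n\sqrt{\d_0\d})$, which is not $o(1/n)$ and so does not survive the union bound over $\Theta(n)$ values of $t$ (the paper's Lemma \ref{lem:secondmoment} itself switches to an MGF for precisely this reason when a $\sup_t$ statement is needed); and a bounded-differences argument on the step-by-step exposure would have to contend with the fact that the entire subsequent trajectory of the biased walk is a function of the exposed pairing, so the Lipschitz constant is not obviously $O(1)$. The P\'olya-urn route you mention in passing is the right one, but you would need to develop it into the conditional-MGF recursion; as written, the proposal does not close either the expectation bound or the concentration bound.
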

Note that as $\d_0 = 1/\log\log n$, this implies that $\F(t) \gg n\d$ whenever $\d_1 \geq \d\geq \d_3$.

\begin{proof}
Fix some $t_1\leq t\leq t_3$, and define $\d = \d(t)$ by $t = (1-\d)\frac{3n}{2}$ as usual. Recall that $Y(t) = X_1(t)\setminus \{v_0\}$ where $v_0$ is the tail of the walk $W(t)$, as defined in Section \ref{sec:uniform}. Define events
$$
\EE(t) = \left\{Y(t) \geq \frac{3n-2t}{2}\right\},
$$
and let $\ind{t}$ denote the indicator variable for $\EE(t)$. Note that by \eqref{eq:X1mgf} with $\th=1/\log n$,
$$
\Prob{\ol{\EE(t)}} = \Prob{e^{-\th Y(t)} > e^{-\th\frac{3n-2t}{2}}} \leq \exp\left\{-\frac32\th n\d(1+o(1))\right\} \leq \eta
$$
for $\d\geq \d_3$, where we define $\eta = \exp\left\{-n^{1/3}\log^3 n\right\}$.
\begin{claim}\label{cl:mgf}
If $0 < \th \leq n^{-2/3}\log^{2}n$, then there exists an $\e = o(\log^{-1} n)$ such that for $t_0\leq t\leq t_3$,
$$
\E{e^{-\th(\F(t+1)-\F(t))}\ind{t}\  \middle|\ [W(t)]} \leq \exp\left\{\frac{\th\F(t)}{3n-2t}(1 + \e)\right\}\ind{t}.
$$
\end{claim}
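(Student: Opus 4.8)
The plan is to track how $\F(t)$ evolves over a single step of the walk, conditioning on the contracted walk data $[W(t)]$, and then bound the moment generating function of the increment $\F(t+1)-\F(t)$ by comparing with a Pólya urn. Recall from the remark following Lemma~\ref{lem:uniform} that, given $[W(t)]$, the path lengths $(K_1,\dots,K_\F)$ are distributed as the color-class sizes of a Pólya urn started with $\F$ balls and run $|Y(t)|$ times. When the walk discovers the $(t+1)$-st edge, the number of green edges changes in a way governed by whether the new edge splits an existing green path (increasing $\F$) or whether a green path is traversed and ``used up'' (possibly decreasing $\F$). On the event $\EE(t)=\{Y(t)\ge (3n-2t)/2\}$, the vertices of $Y(t)$ are plentiful relative to $\F(t)$, so the urn is heavily loaded and each green edge carries many once-visited vertices; this is what lets us control the downward fluctuations.

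First I would write $\F(t+1)-\F(t)$ explicitly as a function of the single matching step that reveals the $(t+1)$-st edge: with probability proportional to $X_3(t)$ the new configuration point lands at a vertex of $X_3$ and $\F$ increases by one (a new green edge is born); with probability proportional to $X_1(t)$ it lands at a vertex of $X_1$, the walk traverses an existing green edge, and $\F$ decreases by one while the walk continues along that green bridge — but the length $K_i$ of that particular green bridge is exactly the relevant contribution, since the green bridge is ``spent''. The key point is that by Lemma~\ref{lem:uniform} the green edge traversed is chosen with probability proportional to $1 + (K_i-1) = K_i$ among the green bridges (longer bridges are more likely to be hit, since they contain more once-visited vertices), so the expected decrement is at most $\sum_i K_i \cdot K_i / (\sum_j K_j)$ up to lower-order terms — i.e., governed by the size-biased urn class size. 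Then I would invoke a standard Pólya-urn moment bound: for the urn with $\F$ initial colors and $|Y|$ additions, the largest class size is $O((|Y|/\F)\log n)$ with probability $1-O(\eta)$, or more quantitatively $\E{e^{\th \max_i K_i}}$ is controlled for $\th$ as small as $n^{-2/3}\log^2 n$; combined with $\F(t)\ge(\d_0\d)^{1/2}n$ (which we may assume inductively via Lemma~\ref{lem:Phiconcentration}, or rather which this claim is a step toward proving, so we instead just carry $\F(t)$ as a variable) and $|Y(t)|\le 3n\d$, this gives $|Y|/\F = O((\d/\d_0)^{1/2}) = o(1)$ on $\EE(t)$, so the urn classes are all small and the decrement is tightly concentrated.

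Assembling: conditioned on $[W(t)]$ and on $\EE(t)$, the step probabilities are $\Prob{\text{hit }X_1} = (1\pm o(1))\,\frac{X_1(t)}{3n-2t}$ and $\Prob{\text{hit }X_3}=(1\pm o(1))\,\frac{X_3(t)}{3n-2t}$, and the expected increment of $\F$ is approximately $\frac{X_3(t) - (\text{size-biased }K)}{3n-2t}$; since on $\EE(t)$ we have $\sum_i K_i = |Y(t)| \ge (3n-2t)/2$ and the size-biased class is $o(\F(t)\cdot |Y|/\F) = o(\F(t))$ in the relevant parameter range, the dominant term is $+\F(t)/(3n-2t)$ coming from the fact that each of the $\sim X_1$ steps that merely extends a green bridge does not destroy a green edge while the births contribute positively — more carefully, one checks $\E{\F(t+1)-\F(t)\mid [W(t)]}\le \frac{\F(t)}{3n-2t}(1+\e)$ on $\EE(t)$, and then a one-step exponential supermartingale estimate (expanding $e^{-\th x}\le 1-\th x+\th^2 x^2$ for the small increments, using the urn tail bound to handle the rare large-decrement event, whose probability is absorbed into $\e$) upgrades this to the stated moment generating function bound with the same $\e=o(\log^{-1}n)$. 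The main obstacle is the middle step: correctly identifying that the green edge traversed when the walk hits $X_1$ is size-biased by path length (via Lemma~\ref{lem:uniform}), and then getting a clean enough Pólya-urn tail bound on $\max_i K_i$ that survives multiplication by $\th$ up to $\th = n^{-2/3}\log^2 n$ — this is where the hypothesis $\F(t)\gg n\d$ and the event $\EE(t)$ must be used in tandem, and where the error term $\e$ is actually produced.
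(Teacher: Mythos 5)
The opening decomposition (Bernoulli split according to whether the new configuration point lands in $X_3\cup X_2$ or in $X_1$) matches the paper, but the analysis of the second branch contains a genuine error, not just an imprecision. You argue that when the walk lands on a once-visited vertex $u$ the relevant bridge is chosen size-biased by its length $K_i$, that the whole bridge is then ``spent,'' and hence that the decrement is governed by $\sum_i K_i^2/\sum_j K_j$. Two things are wrong here. First, after arriving at a vertex of $Y(t)$ the walk immediately takes that vertex's unique red edge and the round ends, so only \emph{one} edge of that bridge is consumed, not $K_i$ of them. Second -- and this is the dominant contribution you are missing -- the walk can cross arbitrarily many bridges of length $K_i=1$ (which contain no $Y$-vertex and therefore do not end the round) before it finally meets a $Y$-vertex or $v_0$. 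Since in the relevant regime $Y(t)\ll\F(t)$ (recall $\F\geq n(\d\d_0)^{1/2}$ while $Y\approx 3n\d$), a random bridge has $K_i>1$ with probability only about $Y/\F$, so the expected number of bridges crossed is of order $\F/Y\gg 1$. Your size-biased estimate evaluates to $\E{K^2}/\E{K}=1+O(Y/\F)$, i.e.\ roughly $1$, which undercounts the decrement by a factor of about $\F/Y$ and would not produce the drift term $\th\F(t)/(3n-2t)$ that the claim asserts (and which is in fact tight, since the true expected decrement is $\approx \F(t)/(3n-2t)-1$).

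The paper's argument is also simpler than the P\'olya-urn tail bound you propose. Writing $\F(t+1)-\F(t)\stackrel{d}{=}1-B\,R_t$ with $B$ Bernoulli$\left(X_1/(3n-2t-1)\right)$ and $R_t$ the number of green edges traversed before the round ends, the uniform-composition structure from Lemma~\ref{lem:uniform} gives directly $\Prob{K_1=\cdots=K_\ell=1}\leq(1-Y/\F)^\ell$, so $R_t$ is stochastically dominated by a Geometric$(Y/\F)$ random variable. The claimed MGF bound then follows from the closed-form MGF of a geometric together with the relation $X_1\approx Y\approx 3n-2t$; no bound on $\max_i K_i$ is needed or used. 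To repair your approach you would need to replace the ``one size-biased bridge is spent'' picture with the ``geometric number of length-one bridges are crossed'' picture, which is precisely where the $\F(t)/(3n-2t)$ scaling comes from.
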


We now show how Claim \ref{cl:mgf} is used to provide a lower bound for $\F(t)$, before proving the claim. Firstly, as each vertex of $Y(t)$ is incident to exactly two green edges, we have $\F(t_0) \geq Y(t_0)$. So for $\th > 0$ with $\th = o(1)$, we have by \eqref{eq:X1mgf},
\begin{align}
\E{e^{-\th\F(t_0)}\ind{t_0}} \leq \E{e^{-\th (X_1(t_0)-1)}} \leq \exp\left\{-3\th n\d_0(1 - o(1))\right\}. \label{eq:ft0bound}
\end{align}
Set
$$
f_t(\th) = \E{e^{-\th\F(t)}\ind{t}}.
$$
As $\th, \F(t) > 0$, we have $e^{-\th\F(t)} \leq 1$ and
$$
f_t(\th) = \E{e^{-\th\F(t)}\ind{t-1}} + \E{e^{-\th\F(t)}(\ind{t} - \ind{t-1})} \leq \E{e^{-\th\F(t)}\ind{t-1}} + \Prob{\EE(t)\setminus \EE(t-1)}.
$$
Note that $\Prob{\EE(t)\setminus \EE(t-1)} \leq \Prob{\ol{\EE(t-1)}} \leq \eta$. Claim \ref{cl:mgf} implies that
\begin{align}
f_{t+1}(\th) & = \E{e^{-\th \F(t+1)}\ind{t}} + \eta \\
& = \E{\E{e^{-\th\F(t)} e^{-\th(\F(t+1)-\F(t))}\ind{t}\ \middle|\ [W(t)]}} + \eta \\
& \leq \E{e^{-\th \F(t)} \E{e^{-\th(\F(t+1)-\F(t))}\ind{t}\ \middle|\ [W(t)]}} + \eta \\
& \leq \E{\exp\left\{-\th\F(t) + \frac{\th\F(t)}{3n-2t}(1 + \e)\right\} \ind{t}} + \eta\\
& = f_t\left(\th - \frac{\th(1 + \e)}{3n-2t}\right) + \eta. \label{eq:frecursion}
\end{align}
From the bound \eqref{eq:ft0bound} it follows by induction that for $t > t_0$,
$$
f_t(\th) \leq \exp\left\{-3\th n\d_0(1-o(1))\prod_{s = t_0}^{t-1} \left(1 - \frac{1+\e}{3n-2s}\right)\right\} + (t-t_0)\eta.
$$
With $\d_0 = 1/\log\log n, \d \leq \d_1 = \log^{-1/2}n$ and $\e = o(\log^{-1}n)$, we have by calculations similar to those leading up to \eqref{EX3},
$$
\prod_{s=t_0}^{t-1} \left(1 - \frac{1+\e}{3n-2s}\right) \approx \exp\left\{-\sum_{s=t_0}^{t-1} \frac{1}{3n-2s} - o\left(\frac{1}{\log n} \sum_{s=t_0}^{t-1} \frac{1}{3n-2s}\right)\right\} \approx \bfrac{3n-2t}{3n-2t_0}^{1/2},
$$
as $\sum_{s=t_0}^{t-1}1/(3n-2s) = O(\log n)$. This implies that for $\d_3 \leq \d \leq \d_1$,
\begin{align}
3n\d_0(1-o(1)) \prod_{s=t_0}^{t-1}\left(1 - \frac{1+\e}{3n-2s}\right) & = 3n\d_0\bfrac{3n-2t}{3n-2t_0}^{1/2}(1+o(1))\\
& \geq 2n(\d\d_0)^{1/2},
\end{align}
so
\begin{equation}\label{eq:ftbound}
f_t(\th) \leq \exp\left\{-2 \th n(\d\d_0)^{1/2}\right\} + (t-t_0)\eta
\end{equation}
Now, if
$$L(t)=n(\d\d_0)^{1/2}$$
then
\begin{align}
\Prob{\F(t) < L(t)} & \leq \Prob{\F(t) < L(t), \EE(t)} + \Prob{\ol{\EE(t)}}
\end{align}
and for $\th > 0$, the bound $\ind{\{X > a\}} \leq X/a$ for $X, a > 0$ implies
\begin{align}
\Prob{\F(t) < L(t), \EE(t)} = \E{\ind{\left\{e^{-\th \F(t)} > e^{-\th L(t)}\right\}}\ind{t}} \leq \frac{\E{e^{-\th \F(t)}\ind{t}}}{e^{-\th L(t)}}.
\end{align}
Then for $\d\leq \d_1=\log^{-1/2}n$ and $\th = n^{-2/3}\log^2 n$,
\begin{align}
e^{\th L(t)}\eta & \leq \exp\left\{\left(n^{-2/3}\log^2 n\right)\left(n(\d_1\d_0)^{1/2}\right) - n^{1/3}\log^3 n\right\} \\
&\leq \exp\left\{n^{1/3}\left(\frac{\log^{2-1/4} n}{\sqrt{\log\log n}} - \log^3 n\right)\right\} = o(n^{-2}),
\end{align}
and as $f_t(\th) \leq e^{-2 \th L(t)} + \frac{3n}{2}\eta$ by \eqref{eq:ftbound},
$$
\Prob{\F(t) < L(t)} \leq e^{\th L(t)}f_t(\th) + \eta \leq e^{-\th n(\d\d_0)^{1/2}} + o(n^{-1}).
$$
We conclude that if $\th = n^{-2/3}\log^2 n$,
\begin{align}
\Prob{\exists t_1 \leq t\leq t_3 : \F(t) < L(t)}
& \leq o(1) + \sum_{t=t_1}^{t_3} \left(\exp\left\{-\th n(\d\d_0)^{1/2}\right\} + o(n^{-1})\right) \\
& \leq o(1) + O\left(n\exp\left\{-\frac{\log^{2 + 4/2}n}{\sqrt{\log\log n}}\right\}\right) \\
& = o(1).
\end{align}
It remains to prove Claim \ref{cl:mgf}.

\begin{proof}[Proof of Claim \ref{cl:mgf}]
We are interested in the distribution of $\F(t + 1) - \F(t)$, conditioning on the contracted walk $[W(t)]$. Write $[W(t)] = (\langle W(t)\rangle, Y(t))$, where $Y(t)\subseteq X_1(t)$ is the set of vertices visited exactly once by $W(t)$, as defined in Section \ref{sec:uniform}. Write $\langle W(t)\rangle = (x_1',x_2',\dots,x_{2s-1}')$.

Reveal $x_{2s}' = \m(x_{2s-1}')$. If $x_{2s}' \in \PP(X_3(t)\cup X_2(t))$ then one green edge is added, and no other green edges are visited before the next edge is to be added, so $\F(t + 1) = \F(t) + 1$.

On the other hand, if $x_{2s}'\in \PP(X_1(t))$, then the walk will proceed until it finds another configuration point of $P(t)$. We are considering a random walk on $\langle W(t)\rangle$ starting at $x_{2s}'$. Let $v_0$ denote the tail vertex of the walk. Then $\langle W(t)\rangle$ induces a graph $G(t)$ on $X_0\cup \{v_0\}$ with blue and green edges, in which each vertex of $X_0$ has degree $3$ while $v_0$ has degree $1,2$ or $3$. Each time the walk on $G(t)$ traverses a green edge, we reveal the length of the corresponding green bridge in $W(t)$. The walk ends either when (i) a green edge is traversed and revealed to contain a vertex of $Y(t)$, or (ii) the vertex $v_0$ is reached, assuming $v_0\in X_1\cup X_2$.

Suppose there are $\F(t)$ green edges in $W(t)$, and $\f = \F(t) - Y(t)$ green edges in $[W(t)]$. Let $e_1,\dots,e_\f$ denote the green edges of $[W(t)]$. By Lemma \ref{lem:uniform}, the number of edges in the corresponding green bridges in $W(t)$ is a vector $(K_1,\dots,K_\f)$, uniformly drawn from all vectors with $K_i\geq 1$ and $\sum K_i = \F$. For $\ell \in \{1,\dots,\f\}$ we have
$$
\Prob{K_i = 1 \text{ for } i = 1,2,\dots,\ell} = \prod_{i=1}^\ell \frac{\binom{\F-i-1}{\f-i-1}}{\binom{\F-i}{\f-i}} = \prod_{i=1}^\ell \frac{\f-i}{\F-i} = \prod_{i=1}^\ell\left(1 - \frac{Y(t)}{\F(t)-i}\right)\leq \left(1 - \frac{Y(t)}{\F(t)}\right)^\ell.
$$
This shows that the number of green edges traversed before finding $v_0$ or a vertex of $Y(t)$ is stochastically dominated by a geometric random variable with success probability $Y(t) / \F(t)$.

This shows that in distribution, conditioning on the walk $[W(t)]$,
$$
\F(t+1) - \F(t) \stackrel{d}{=} 1 - B\left(\frac{X_1(t)}{3n-2t-1}\right) R_t
$$
where $B(p)$ denotes a Bernoulli random variable with success probability $p$, $R_t$ is stochastically dominated by a geometric random variable with success probability $Y(t) / \F(t)$, and the two random variables in the right-hand side are independent. We have
\begin{equation}\label{eq:mgfsplit}
\E{e^{-\th(\F(t + 1) - \F(t))}\mid [W(t)]} = e^{-\th}\left(1 - \frac{X_1(t)}{3n-2t-1} + \frac{X_1(t)}{3n-2t-1}\E{e^{\th R_t} \mid [W(t)]}\right).
\end{equation}
As $x\mapsto e^{\th x}$ is increasing, we can couple $R_t$ to a geometric random variable $Z_t$ with success probability $Y(t) / \F(t)$ so that
$$
\E{e^{\th R_t}\mid [W(t)]} \leq \E{e^{\th Z_t}\mid [W(t)]}.
$$

As $[W(t)]$ is such that $\EE(t)$ holds, i.e. $Y(t) \geq (3n-2t)/2 = \OM(n^{1/3}\log^4 n)$,
$$
|(1-e^{-\th})\E{Z_t}| \leq (\th + O(\th^2))\frac{\F(t)}{Y(t)} \leq \frac{\log^2 n}{n^{2/3}}\frac{3n/2}{\OM(n^{1/3}\log^4 n)} = o\bfrac{1}{\log n}.
$$
So, as $Z_t$ is geometrically distributed,
\begin{align}
\E{e^{\th Z_t}} = \frac{\frac{Y(t)}{\F(t)}e^{\th}}{1 - \left(1 - \frac{Y(t)}{\F(t)}\right)e^{\th}} = \frac{1}{1 - \frac{\F(t)}{Y(t)}(1 - e^{-\th})} & = 1 - \frac{\F(t)}{Y(t)}(1-e^{-\th}) + O\left(\frac{\F(t)^2}{Y(t)^2}(1-e^{-\th})^2\right) \\
& = 1 + \frac{\th \F(t)}{Y(t)} + O\bfrac{\th^2 \F(t)^2}{Y(t)^2} \\
& \leq 1 + \frac{\th\F(t)}{Y(t)} (1 + \e)
\end{align}
for some $\e = o(\log^{-1} n)$. So if $[W(t)]$ is a class of walks with $Y(t) \geq (3n-2t)/2$ then
\begin{align}
\E{e^{-\th(\F(t+1) - \F(t))}\ind{t}\mid [W(t)]}
& \leq e^{-\th}\left(1 - \frac{X_1(t)}{3n-2t-1} + \frac{X_1(t)}{3n-2t-1}\left(1 + \frac{\th\F(t)}{Y(t)} + O\bfrac{\th^2\F(t)^2}{Y(t)^2}\right)\right) \\
& \leq 1 + \frac{\th\F(t)}{3n-2t} + O\bfrac{\th^2\F(t)^2}{(3n-2t)^2} \\
& \leq \exp\left\{\frac{\th\F(t)}{3n-2t}(1+\e)\right\}
\end{align}
where $\e = o(\log^{-1} n)$.
\end{proof}

This finishes the proof of Lemma \ref{lem:Phiconcentration}.
\end{proof}

\bibliographystyle{plain}

\end{document}